\DeclarePairedDelimiter{\ceil}{\lceil}{\rceil}
\renewcommand{\a}{\alpha}
\renewcommand{\l}{\lambda}
\renewcommand{\(}{\left\(}
\renewcommand{\)}{\right\)}
\newcommand{\os}[2]{\overset{#1}{#2}}
\newcommand{\pa}[2]{\left(\frac{#1}{#2}\right)}
\numberwithin{equation}{section}
\theoremstyle{plain}
\newtheorem{theorem}{Theorem}[section]
\newtheorem{lemma}[theorem]{Lemma}
\newtheorem{remark}[theorem]{Remark}
\newtheorem{definition}[theorem]{Definition}
\numberwithin{equation}{section}
\renewcommand{\binom}[2]{\left(\begin{smallmatrix}#1\\\\#2\end{smallmatrix}\right)}
\newcommand{\smallbinom}[2]{(\begin{smallmatrix}#1\\#2\end{smallmatrix})}
\setlist[enumerate]{leftmargin=*,label=\rm{(\arabic*)}}
\definecolor{blaugrau}{rgb}{0.796887, 0.789075, 0.871107}
\newcounter{mmacnt}
\def\restartmma{\setcounter{mmacnt}{0}}
\newenvironment{mma}{
	\par
	\catcode`|=\active
	\parskip=4pt\parindent=0pt 
	\small
	\def\In##1\\{%
		\def\linebreak{\hfill\break\null\qquad}%
		\refstepcounter{mmacnt}
		\hangindent=2.5em\hangafter=0
		\leavevmode
		\llap{\tiny\sffamily In[\arabic{mmacnt}]:=\kern.5em}%
		\mathversion{bold}\footnotesize$\tt\bf\displaystyle##1$\normalsize
		\mathversion{normal}\par
	}%
	\def\Print##1\\{%
		\def\linebreak{\hfill\break}%
		\hangindent=2.5em\hangafter=0
		\leavevmode\scriptsize ##1\par}%
	\def\Out##1\\{%
		\vspace*{-0.2cm}\def\linebreak{$\hfill\break\null\hfill$}%
		\kern\abovedisplayskip\par
		\hangindent=2.5em\hangafter=0
		\leavevmode
		\llap{\tiny\sffamily Out[\arabic{mmacnt}]=\kern.5em}
		\footnotesize$\displaystyle\tt##1$\normalsize\hfill\null\par
		\kern\belowdisplayskip\vspace*{-0.3cm}
	}%
	\def\Warning##1##2\\{%
		\def\linebreak{\hfill\break}%
		\hangindent=2.5em\hangafter=0
		\leavevmode
		{\scriptsize##1 : ##2}\par}%
}{%
	\par\medskip
}
\newcommand{\LoadP}[1]{\fcolorbox{black}{blaugrau}{
		\begin{minipage}[t]{13cm}
			\footnotesize #1
\end{minipage}}}
\newcommand{\myIn}[1]{{\sffamily In[#1]}}
\newcommand{\myOut}[1]{{\sffamily Out[#1]}}
\def\MLabel#1{{\refstepcounter{mmacnt}\label{#1}}\addtocounter{mmacnt}{-1}}
\title{Asymptotics for the reciprocal and shifted quotient of the partition function}
\author{Koustav Banerjee}
\author{Peter Paule}
\author{Cristian-Silviu Radu}
\author{Carsten Schneider}
\address{Research Institute for Symbolic Computation (RISC), Johannes Kepler University, Altenberger Straße 69, A-4040 Linz, Austria. In addition to RISC, Peter Paule is affiliated as guest professor to the Center for Applied Mathematics (TCAM), Tianjin University, Tianjin 300072, P. R. China. The first author is affiliated at Department of Mathematics and Computer Science, Division of Mathematics, University of Cologne, Weyertal 86-90, 50931 Cologne, Germany}
\email{kbanerj1@uni-koeln.de, Koustav.Banerjee@risc.jku.at}
\email{Peter.Paule@risc.jku.at}
\email{sradu@risc.jku.at}
\email{Carsten.Schneider@risc.jku.at}
\subjclass[2020]{05A16, 05A20, 11P82.}
\keywords{integer partitions, Hardy-Ramanujan Rademacher formula, asymptotic expansion, shifted quotient of partitions.}
\begin{document} 

\begin{flushright}
	\hfill RISC Report Series 24-06\\[0.5cm] 
\end{flushright}

	\maketitle
	\begin{abstract}
		Let $p(n)$ denote the partition function. In this paper our main goal is to derive an asymptotic expansion up to order $N$ (for any fixed positive integer $N$) along with estimates for error bounds for the shifted quotient of the partition function, namely $p(n+k)/p(n)$ with $k\in \mathbb{N}$, which generalizes a result of Gomez, Males, and Rolen. In order to do so, we derive asymptotic expansions with error bounds for the shifted version $p(n+k)$ and the multiplicative inverse $1/p(n)$, which is of independent interest.
	\end{abstract}

\tableofcontents
	
	\section{Introduction and summary of results}
	A partition $\lambda$ of a positive integer $n$ is a weakly decreasing sequence of positive integers $\l=\left(\lambda_1,\dots,\lambda_r\right)$ such that $\sum_{j=1}^{r}\lambda_j=n$. The total number of partitions of $n$ is denoted by $p(n)$ with $p(0):=1$. A rigorous study on $p(n)$ from the analytic point of view began with the seminal work of Hardy and Ramanujan \cite{HR}. In \cite{HR}, developing a widely celebrated tool 
	 known as Circle Method, they established the asymptotic growth of $p(n)$ which states that
	\[
	p(n)\sim \frac{e^{\pi\sqrt{\frac{2n}{3}}}}{4n\sqrt{3}}\ \ \text{as}\ n\rightarrow\infty.
	\]
	Later Rademacher \cite{R} refined the Circle Method to give an exact formula for $p(n)$. Lehmer \cite{L1,L2} estimated the error term after truncating Rademacher's convergent series for $p(n)$. Among many other applications, Rademacher's formula along with Lehmer's estimate have been used to prove inequalities for $p(n)$. The first such instance has been documented in Nicolas' work \cite{N} on the {\it log-concavity} of $p(n)$, where a sequence (of positive real numbers) $(a_n)_{n\ge 0}$ is called {\it log-concave} if $a_n^2\ge a_{n-1}a_{n+1}$ for all $n\ge 1$. We refer to \cite{C1,RPS} for a more detailed study on {\it log-concavity} and its higher order analogues known as {\it higher order Tur\'{a}n inequalities}. After a standstill for more than thirty years, DeSalvo and Pak's work \cite{DP} on the {\it log-concavity} of $p(n)$ and its associated inequalities not only confirmed the conjectures of Chen \cite{C} but also resurrected this particular area of research on $p(n)$. Since then an extensive amount of work on inequalities for $p(n)$ and its variations have been recorded. Among many others, we refer the readers to the work of Bringmann, Kane, Rolen, and Tripp \cite{BKRT} on inequalities for the {\it fractional partition function} $p_{\alpha}(n)$ arising from the famous {\it Nekrasov–Okounkov formula} \cite{NO} and to Ono, Pujahari, and Rolen's work \cite{OPR} on Tur\'{a}n inequalities for the {\it plane partition function} $\text{PL}(n)$. For a detailed study on $\text{PL}(n)$, see \cite{M}.
	
	In the following we shall restrict ourselves to inequalities for $p(n)$. The motivation for doing so is as follows. We call a polynomial with real coefficients {\it hyperbolic} if all roots are real. The sequence of so-called {\it Jensen polynomials} associated with $p(n)$ is defined as
	\begin{equation}\label{Jenpoly}
	J^{d,n}_{p}(x):=\sum_{j=0}^{d}\binom{d}{j}p(n+j)x^j.
	\end{equation} 
	From work of Nicolas \cite{N} and DeSalvo and Pak \cite{DP} we already know that $J^{2,n}_p(x)$ is hyperbolic for $n\ge 26$. The case for $d=3$ has been settled by Chen, Jia, and Wang \cite{CJW}. Soon after, in their fundamental work on a problem entangled with variants of the {\it Riemann hypothesis}, Griffin, Ono, Rolen, and Zagier \cite{GORZ} confirmed that for any $d\in \mathbb{Z}_{\ge 0}$, $J^{d,n}_p(x)$ is eventually hyperbolic which settled the conjecture of Chen, Jia, and Wang \cite{CJW}. In \cite[Theorem 3]{GORZ}, Griffin, Ono, Rolen, and Zagier developed a generic framework to decide the {\it hyperbolicity} of Jensen polynomials associated with a family of sequences satisfying {\it certain} growth conditions. In particular, for $p(n)$ they proved the hyperbolicity of $J^{d,n}_p(x)$ for sufficiently large $n$ by showing the following uniform convergence property:
	\begin{equation}\label{GORZeqn}
	\underset{n\rightarrow\infty}{\lim}\left(\frac{\delta(n)^d}{p(n)}J^{d,n}_p\left(\frac{\delta(n)x-1}{\exp\left(A(n)\right)}\right)\right)=H_d(x),
	\end{equation}
	with $H_d(x)$ the $d$-th Hermite polynomial and where the sequences $(\delta(n))_{n\ge 0}$, $(A(n))_{n\ge 0}$ are determined by
	\begin{equation*}
	\log \left(\frac{p(n+j)}{p(n)}\right)=A(n)j-\delta(n)^2j^2+o\left(\delta(n)^d\right)\ \ \text{as}\ n\rightarrow \infty.
	\end{equation*} 
	Larson and Wagner \cite[Theorem 1.3]{LW} derived an effective cutoff $N(d)$ for $n$ such that $J^{d,n}_p(x)$ is hyperbolic for all $n\ge N(d)$. In order to arrive at their main result, one of the quintessential steps was to get an asymptotic expansion of the quotient $p(n+j)/p(n)$; see \cite[Lemmas 2.2 and 2.3]{LW}. Recently, Gomez, Males, and Rolen \cite[Theorem 1.1]{GMR} obtained a more effective error bound for the asymptotic expansion of the ratio $p(n-j)/p(n)$ truncating after the first three terms of the asymptotic series, which, in turn, has many applications including a shifted convexity property of $p(n)$ (see \cite{A,GO,GU,KK,O}) and new estimates of the $k$-rank partition function $N_k(m,n)$ defined by Garvan \cite{GA}. In this article we generalize the result \cite[Theorem 1.1]{GMR} of Gomez, Males, and Rolen as stated below in Theorem \ref{Mainthm} but with a slight variant; namely, for quotients with positive shifts $p(n+k)/p(n)$. 
	
	Throughout, we write $f(x)=O_{\le c}\left(g(x)\right)$ if $\left|f(x)\right|\le c g(x)$ for a positive function $g$ and for a domain of $x$ which we will specify in each given context. With this convention we can state the main result of this article. Moreover, we use $\mathbb{N}$ to denote the set of positive integers. 
	\begin{theorem}\label{Mainthm}
		Let $(k,N)\in\mathbb{N}^2$. Then for all $n\ge n_N(k)$
		\begin{equation*}
		\frac{p(n+k)}{p(n)}=\sum_{m=0}^{N}\dfrac{c_k(m)}{n^{\frac m2}}+O_{\le E_N(k)}\left(n^{-\frac{N+1}{2}}\right),
		\end{equation*}
		where $c_k(0)=1$, the constants $(c_k(m))_{m\ge1}$, $n_N(k)$, and $E_N(k)$ are determined effectively for any fixed $N$. 
	\end{theorem}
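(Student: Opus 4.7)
The plan is to follow the two-step program announced in the abstract: establish effective asymptotic expansions with explicit error bounds for the numerator $p(n+k)$ and for the reciprocal $1/p(n)$ separately, and then multiply the two to obtain the stated expansion for the ratio. Let me describe each of the three components.

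First, I would start from the Hardy--Ramanujan--Rademacher exact formula, truncated after the first Kloosterman term and controlled by Lehmer's tail estimate. This writes $p(n)=M(n)+R(n)$ with $M(n)$ an explicit expression in $\sinh(\pi\sqrt{2(n-1/24)/3})$ and $\sqrt{n-1/24}$, and $|R(n)|$ bounded explicitly. From this starting point the aim is to derive, for any fixed $N$, an identity of the form
\begin{equation*}
p(n)=\frac{e^{\pi\sqrt{2n/3}}}{4n\sqrt{3}}\Bigl(1+\sum_{m=1}^{N}\frac{a(m)}{n^{m/2}}+O_{\le F_N}(n^{-(N+1)/2})\Bigr),
\end{equation*}
valid for all $n$ beyond an explicit threshold. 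This is obtained by expanding $\sqrt{n-1/24}$ and the argument of $\sinh$ in powers of $1/n$, combining the two resulting convergent series, absorbing Lehmer's tail into the error $F_N$, and tracking every constant along the way.

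Next, to obtain a like expansion for $p(n+k)$, I would substitute $n\mapsto n+k$ in the above identity. The only delicate factor is $e^{\pi\sqrt{2(n+k)/3}}/e^{\pi\sqrt{2n/3}}=\exp\bigl(\pi\sqrt{2/3}(\sqrt{n+k}-\sqrt{n})\bigr)$; using $\sqrt{n+k}-\sqrt{n}=k/(2\sqrt{n})\cdot(1+O(k/n))$ together with the Taylor expansion of $\exp$ yields a finite series in $n^{-1/2}$ with coefficients that are explicit polynomials in $k$, plus an effective remainder. The factor $(n+k)^{-1}$ and the inner power-series expansion are handled analogously. The output is an effective expansion for $p(n+k)$ whose error term depends polynomially on $k$ and which is valid for $n$ beyond a threshold $n_N^{(1)}(k)$. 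Third, to invert the expansion of $p(n)$, I would apply the multiplicative-inverse formula for formal power series in $n^{-1/2}$: if $p(n)=A(n)\bigl(1+S_N(n)+O_{\le F_N}(n^{-(N+1)/2})\bigr)$ with $S_N$ a polynomial in $n^{-1/2}$ whose constant term vanishes, then
\begin{equation*}
\frac{1}{p(n)}=\frac{1}{A(n)}\Bigl(1+\sum_{m=1}^{N}\frac{b(m)}{n^{m/2}}+O_{\le G_N}(n^{-(N+1)/2})\Bigr),
\end{equation*}
where the $b(m)$ come from the usual recursion and $G_N$ is bounded by a finite geometric sum over $F_N$, provided $|S_N(n)|\le 1/2$, which holds for $n$ above an explicit $n_N^{(2)}$.

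Finally, I would multiply the expansion of $p(n+k)$ by that of $1/p(n)$. The leading factors combine into a clean ratio $(A(n+k)/A(n))$ which itself expands as a polynomial in $n^{-1/2}$ (with $k$-dependent coefficients) modulo an effective error, and the two polynomial tails multiply to give the coefficients $c_k(m)$ by collecting like powers of $n^{-1/2}$ up to order $N$. All remainder terms of order $n^{-(N+j)/2}$ with $j\ge 1$ are absorbed into a single $O_{\le E_N(k)}(n^{-(N+1)/2})$-bound, where $E_N(k)$ is obtained by summing explicit constants from each of the previous steps, and $n_N(k):=\max\{n_N^{(1)}(k),n_N^{(2)},\ldots\}$ ensures all intermediate inequalities are in force.

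The main obstacle is the bookkeeping in the last step: three nested asymptotic expansions (the shift, the inversion, and the exponential ratio) must be multiplied together while preserving explicit, clean-enough bounds on $E_N(k)$ and a tractable threshold $n_N(k)$. In particular, to keep $E_N(k)$ as a genuinely computable function of $N$ and $k$, one must choose the truncation orders of the intermediate expansions consistently (typically at order $N$ or $N+1$) and estimate products of remainder terms using the uniform bound $|S_N(n)|\le 1/2$. The rest is a careful but routine propagation of the estimates from \cite{L1,L2} through the Taylor expansions of $\sqrt{n+k}-\sqrt{n}$, $\exp$, and $(1+x)^{-1}$.
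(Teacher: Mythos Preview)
Your outline is correct and follows the same three-step program as the paper: an effective expansion for $p(n+k)$, one for $1/p(n)$, and their product, with error bookkeeping at the end. The only differences are technical. The paper starts from the pre-packaged inequality of \cite{BPRZ} (Theorem~\ref{BPRSthm1} here) rather than directly from Rademacher--Lehmer, which shortens the first step. For the reciprocal, instead of inverting a truncated series under a hypothesis $|S_N(n)|\le\tfrac12$ as you propose, the paper Taylor-expands the explicit inverse of the major term $\frac{24n-1}{\sqrt{12}}e^{-\mu(n)}\bigl(1-\tfrac{1}{\mu(n)}\bigr)^{-1}$ directly; this produces closed-form coefficients $g(t)$ (whose estimation requires substantial work, aided by the \texttt{Sigma} package) rather than your recursively defined $b(m)$. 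Your route is lighter on computation but yields less explicit constants; the paper's route gives sharper, more concrete expressions at the cost of considerably more labour. The multiplication step and the final assembly of $E_N(k)$ and $n_N(k)$ proceed exactly as you describe.
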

\begin{remark}\label{BPRSremark1}
The $(c_k(m))_{m\ge1}$, $n_N(k)$, and $E_N(k)$ are stated explicitly in \eqref{Mainthmcoeff}, \eqref{Mainthmcutoff}, and \eqref{Mainthmerrorfinal} respectively.
\end{remark}

	\begin{remark}\label{BRSremark2}
		\begin{enumerate}
			\item In their study on asymptotic $r$-log-concavity for $p(n)$, Hou and Zhang \cite[Theorem 1.3]{HZ} determined an asymptotic expansion of $p(n+1)/p(n)$. Theorem \ref{Mainthm} is a straightforward generalization of their result. In particular, now we have estimated effectively the cutoff $n_N(k)$, the error bound $E_N(k)$, and also provided a concrete description of the coefficient sequence $(c_k(m))_{m\ge 0}$.
			\item In light of the remark \cite[Remark 1.3]{GMR} made by Gomez, Males, and Rolen on positivity of $\Delta^r_j(p(n))$, where $\Delta^r_j$ is the $r$-fold application of the difference operator $\Delta_j$ defined by $\left(\Delta_jp\right)(n)=p(n)-p(n-j)$, it seems that by making the shift $n\mapsto n+j$, one might use Theorem \ref{Mainthm} by choosing the truncation point $N(r)$\footnote{For instance, one can choose $N(r)=r+1$, see \cite[Chap. 6, Sec. 6.7.2]{B}.} (depending on $r\in \mathbb{N}$) in the asymptotic expansion of $p(n+k)/p(n)$ appropriately so as to obtain the asymptotic growth of $\Delta^r_j(p(n))$. In addition to that, we can estimate a cutoff $n_{N(r),r}(j)$ such that  $\Delta^r_j(p(n))\ge 0$ for all $n\ge n_{N(r),r}(j)$. The case $r=2$ was settled in \cite[Theorem 1.2]{GMR}. 
		\end{enumerate}
	\end{remark}
	
	We summarize in brief the main tools which will be used in proving Theorem \ref{Mainthm}. First, we will start with the infinite family of inequalities for $p(n)$ given in \cite[Theorem 4.4]{BPRZ} that states: for $m\in \mathbb{Z}_{\ge 2}$ and $n>g(m)$ (see Theorem \ref{BPRSthm1eqn} below),
	\[
	p(n)=\frac{\sqrt{12}e^{\mu(n)}}{24n-1}\left(1-\frac{1}{\mu(n)}+O_{\le 1}\left(\mu(n)^{-m}\right)\right)\ \ \text{with}\ \mu(n):=\frac{\pi}{6}\sqrt{24n-1}.
	\]
	This result has twofold applications in the present context: (i) by considering the shift $n\mapsto n+k$, after Taylor series expansion of the major term $\frac{\sqrt{12}e^{\mu(n+k)}}{24(n+k)-1}\left(1-\frac{1}{\mu(n+k)}\right)$, we extract the coefficient sequence in the asymptotic expansion of $p(n+k)$ and similarly, (ii) for exhibiting the coefficient sequence in the asymptotic expansion of $1/p(n)$, we use Taylor series expansion for the inverse of the major term; i.e., $\frac{24n-1}{\sqrt{12}e^{\mu(n)}}\left(1-\frac{1}{\mu(n)}\right)^{-1}$.
	
	In addition, we shall use the {\it symbolic summation} package {\texttt{Sigma}}, developed by the third author \cite{S}, to simplify the coefficients appearing in the asymptotic expansion for $1/p(n)$ so as to obtain precise error bound estimates for the asymptotic expansion under consideration.
	
	The rest of this paper is organized as follows: in Section \ref{Sec1}, we will recall a list of preliminary results from \cite{BPRS,BPRZ} which will be helpful in carrying out estimates in the subsequent sections. In Sections \ref{Sec2} and \ref{Sec3}, we will provide asymptotic expansions for the shifted partition function $p(n+k)$ (see Theorem~\ref{Mainthmpart1}) and the inverse $1/p(n)$ (see Theorem~\ref{Mainthmpart2}), respectively. The proof of Theorem \ref{Mainthm} is given in Section \ref{Sec4}. Finally in Section \ref{Sec5}, we lay out a detailed exposition on the usage of the symbolic summation package {\texttt{Sigma}}.
	
	\section{Preliminaries}\label{Sec1}
The following results from \cite{BPRS} and \cite{BPRZ} will be needed. Throughout, for $n\in \mathbb{N}$, $$\mu(n):=\frac{\pi}{6}\sqrt{24n-1}.$$
	\begin{theorem}\cite[Theorem 4.4]{BPRZ}\label{BPRSthm1}
		For $m \in \mathbb{Z}_{\geq 2}$, define
		$$\widehat{g}(m):=\dfrac{1}{24}\Biggl(\dfrac{36}{\pi^2}\cdot \nu(m)^2+1\Biggr),$$
		where $$\nu(m):=2\log 6+(2\log 2)m+2m \log m+2m \log \log m+\dfrac{5m \log \log m}{\log m}.$$ Then for all $m\in\mathbb{Z}_{\geq 2}$ and  $n> \widehat{g}(m)$ such that $(n,m)\neq (6,2)$, we have
		\begin{equation}\label{BPRSthm1eqn}
		p(n)=\frac{\sqrt{12}e^{\mu(n)}}{24n-1}\left(1-\frac{1}{\mu(n)}+O_{\le 1}\left(\mu(n)^{-m}\right)\right).
		\end{equation}
	\end{theorem}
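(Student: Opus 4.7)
My plan is to derive the asymptotic formula starting from the Rademacher exact formula
\[
p(n) = \frac{1}{\pi\sqrt{2}} \sum_{k=1}^{\infty} A_k(n)\sqrt{k} \, \frac{d}{dn}\left(\frac{\sinh\bigl((\pi/k)\sqrt{(2/3)(n-1/24)}\bigr)}{\sqrt{n-1/24}}\right),
\]
where $A_k(n)$ is the usual Kloosterman-type sum. First I would isolate the $k=1$ term and compute its derivative explicitly. Using $\mu=\mu(n)$ and the identity $\sqrt{n-1/24} = \mu\sqrt{6}/(2\pi)$ together with $d\mu/dn = \pi^2/(3\mu)$, one finds
\[
\widetilde p_1(n)=\frac{\pi^2}{6\sqrt{3}\,\mu^3}\Bigl((\mu-1)e^{\mu}+(\mu+1)e^{-\mu}\Bigr)=\frac{\sqrt{12}\,e^{\mu}}{24n-1}\left(1-\frac{1}{\mu}\right)+\frac{\pi^2(\mu+1)e^{-\mu}}{6\sqrt{3}\,\mu^3}.
\]
So the desired principal part falls out of the $k=1$ contribution, and everything that has to be shown small consists of (i) the $e^{-\mu}$ remainder inside $\widetilde p_1(n)$ and (ii) the tail $\sum_{k\ge 2}\widetilde p_k(n)$.

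Second, I would bound the tail $\sum_{k\ge 2}\widetilde p_k(n)$ using Lehmer-type estimates: since $|A_k(n)|\le k$ trivially and $\sinh((\pi/k)\sqrt{(2/3)(n-1/24)})\le\sinh(\mu/k)$, each term is bounded in absolute value by something of order $k\cdot\sinh(\mu/k)/\sqrt{n-1/24}+k\cdot\cosh(\mu/k)$; summing over $k\ge 2$ produces an upper bound dominated by $C\cdot e^{\mu/2}$ for an explicit absolute constant $C$ (obtained e.g.\ from the Rademacher--Lehmer estimate in the classical literature). This, together with the explicit $e^{-\mu}$ remainder from (i), gives a bound of the form
\[
\left|p(n)-\frac{\sqrt{12}\,e^{\mu}}{24n-1}\left(1-\frac{1}{\mu}\right)\right| \le C_1\,\frac{e^{\mu/2}}{n}+C_2\,\frac{e^{-\mu}}{\mu^2}
\]
with fully explicit constants $C_1,C_2$.

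Third, I would factor out the main prefactor $\sqrt{12}\,e^{\mu}/(24n-1)$ on both sides; the remaining task is to verify that the resulting multiplicative error satisfies
\[
\left|\text{error}\right|\le \mu(n)^{-m},
\]
i.e.\ that $(24n-1)\cdot e^{-\mu/2}$ is eventually $\le\mu^{-m}$, which after taking logarithms reduces to a transcendental inequality of the shape
\[
\mu/2 \;\ge\; 2\log\mu + m\log\mu + \log(\text{explicit constant}).
\]
Solving this for $\mu$ (equivalently for $n$) via the Lambert-W-type bootstrap $\mu\ge 2m\log m+2m\log\log m+\cdots$ is where the specific form of $\widehat g(m)$ with the term $\tfrac{5m\log\log m}{\log m}$ emerges; plugging back in confirms that for $n>\widehat g(m)$ the inequality holds with constant $\le 1$.

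The main obstacle, and the step requiring the greatest care, is the third one: extracting an \emph{explicit} cutoff $\widehat g(m)$ with a leading constant of $1$ in the error estimate. One must track every constant through steps (i) and (ii), including the constant from the Lehmer tail bound, and then solve the transcendental inequality sharply enough that the $\log 6$, $m\log 2$, $m\log m$, $m\log\log m$, and $\tfrac{5m\log\log m}{\log m}$ contributions are all captured without losing an extra factor. The exceptional pair $(n,m)=(6,2)$ reflects exactly the fact that the bound is tight and must be checked numerically at the boundary. Tools such as elementary monotonicity arguments and, where necessary, computer-assisted verification of the finite range of small $(n,m)$ would be invoked to close the argument.
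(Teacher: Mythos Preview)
The present paper does not prove this theorem at all: it is quoted verbatim from \cite[Theorem~4.4]{BPRZ} in the Preliminaries section and used as a black box throughout. So there is no ``paper's own proof'' to compare against here.

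That said, your outline is exactly the right strategy and almost certainly matches what is done in \cite{BPRZ}: start from Rademacher's exact series, pull out the $k=1$ term (which produces the $\frac{\sqrt{12}e^{\mu}}{24n-1}(1-1/\mu)$ main term plus an $e^{-\mu}$ correction), bound the $k\ge 2$ tail by a Lehmer-type estimate of size $O(e^{\mu/2})$, divide by the main prefactor, and then solve the resulting inequality $(24n-1)e^{-\mu/2}\lesssim \mu^{-m}$ for $n$ to obtain the explicit cutoff. Your identification of the third step as the delicate one is correct: since $24n-1=36\mu^2/\pi^2$, the inequality becomes roughly $\mu/2\ge (m+2)\log\mu+\text{const}$, and the iterated-logarithm shape of $\nu(m)$ (including the $\frac{5m\log\log m}{\log m}$ term) comes from bootstrapping this. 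One small caution: your displayed inequality ``$\mu/2\ge 2\log\mu+m\log\mu+\cdots$'' should read $(m+2)\log\mu$ rather than $2\log\mu+m\log\mu$ unless you meant exactly that; also, the Lehmer constant and the precise handling of the $e^{-\mu}$ piece have to be tracked with enough care to land on the exact $\nu(m)$ and the constant $1$ in $O_{\le 1}$ --- that bookkeeping is where the actual work in \cite{BPRZ} lies, and your sketch correctly flags it but does not carry it out.
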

\begin{lemma}\cite[Lemma 4.1]{BPRS}\label{BPRSprelimlem1}
	Let $x_1,x_2,\dots,x_n$ and $y_1,\dots,y_n$ be non-negative real numbers such that $x_j\le 1$ for $1\le j\le n$. Then
	$$\frac{(1-x_1)(1-x_2)\cdots (1-x_n)}{(1+y_1)(1+y_2)\cdots(1+y_n)}\geq 1-\sum_{j=1}^nx_j-\sum_{j=1}^ny_j.$$
\end{lemma}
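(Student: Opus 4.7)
The plan is to reduce the statement to the classical Weierstrass product inequality, which asserts that
\[
\prod_{j=1}^n(1-a_j) \ge 1 - \sum_{j=1}^n a_j
\]
whenever $0 \le a_j \le 1$. The reduction is accomplished by introducing the auxiliary quantities
\[
z_j := \frac{x_j + y_j}{1 + y_j}, \qquad j = 1,\dots,n.
\]
First I would verify the algebraic identity $1 - z_j = (1-x_j)/(1+y_j)$, which makes the left-hand side of the lemma coincide with $\prod_{j=1}^n(1-z_j)$. Next I would check the hypotheses needed to apply Weierstrass: nonnegativity $z_j \ge 0$ is immediate from $x_j, y_j \ge 0$, the bound $z_j \le 1$ is equivalent (after clearing the positive denominator) to $x_j \le 1$, which is hypothesized, and finally $z_j \le x_j + y_j$ holds since $1 + y_j \ge 1$.

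With these facts in place the proof concludes in one line: Weierstrass applied to the $z_j$ gives
\[
\frac{\prod_{j=1}^n(1-x_j)}{\prod_{j=1}^n(1+y_j)} \;=\; \prod_{j=1}^n(1 - z_j) \;\ge\; 1 - \sum_{j=1}^n z_j \;\ge\; 1 - \sum_{j=1}^n(x_j+y_j),
\]
where the last step uses $z_j \le x_j + y_j$.

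The only ingredient still to be supplied is the Weierstrass product inequality itself, which I would prove by a short induction on $n$: the case $n=1$ is trivial, and for the step one either notes that $\sum_{j<n} a_j \ge 1$ makes the right-hand side nonpositive (while the left is $\ge 0$ since each $1-a_j \ge 0$), or else multiplies the inductive bound $\prod_{j<n}(1-a_j) \ge 1 - \sum_{j<n} a_j$ by the nonnegative factor $1 - a_n$ and discards the nonnegative cross term $a_n \sum_{j<n} a_j$. I do not anticipate any genuine obstacle; the substantive content of the lemma is carried entirely by the choice of $z_j$, which simultaneously linearizes the quotient and keeps the deviation $z_j - (x_j+y_j)$ of the right sign.
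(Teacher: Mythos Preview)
Your argument is correct. The substitution $z_j=(x_j+y_j)/(1+y_j)$ yields the identity $1-z_j=(1-x_j)/(1+y_j)$, the hypotheses $0\le z_j\le 1$ follow exactly as you check, and then the Weierstrass product inequality together with $z_j\le x_j+y_j$ finishes the proof cleanly.

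As for comparison with the paper: this lemma is not proved in the present paper at all. It is quoted verbatim from \cite[Lemma~4.1]{BPRS} in the Preliminaries section, with no accompanying argument. So there is no ``paper's own proof'' to compare against here; your self-contained proof via the Weierstrass inequality is a perfectly acceptable way to supply one.
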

From \cite[eqs. (1) and (2)]{Rob}, we obtain the following inequality for the central binomial coefficients. 
\begin{lemma}\label{BPRSprelimlem2} For $n\in \mathbb{N}$, we have 
\[
\frac{4^n}{\sqrt{\pi n}}\left(1-\frac{1}{8n}\right)<\Large\binom{2n}{n}<\frac{4^n}{\sqrt{\pi n}}.
\]
\end{lemma}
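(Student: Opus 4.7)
The plan is to apply Robbins' two-sided refinement of Stirling's formula (eqs.~(1) and (2) in \cite{Rob}),
\[
\sqrt{2\pi n}\,(n/e)^n\,e^{1/(12n+1)} \;<\; n! \;<\; \sqrt{2\pi n}\,(n/e)^n\,e^{1/(12n)},
\]
to the identity $\binom{2n}{n} = (2n)!/(n!)^2$, and then track the residual exponential factor in each direction.

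For the upper bound I would combine the upper Robbins bound on $(2n)!$ with the lower Robbins bound on each $n!$ in the denominator. The algebraic factors collapse to $4^n/\sqrt{\pi n}$, leaving the correction $\exp\bigl(\tfrac{1}{24n} - \tfrac{2}{12n+1}\bigr)$. Its exponent is strictly negative for every $n\in\mathbb{N}$, since $\tfrac{2}{12n+1} > \tfrac{1}{24n}$ simplifies to $48n > 12n+1$. This immediately yields $\binom{2n}{n} < 4^n/\sqrt{\pi n}$.

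For the lower bound I would reverse the roles to obtain
\[
\binom{2n}{n} \;>\; \frac{4^n}{\sqrt{\pi n}}\exp\bigl(\tfrac{1}{24n+1} - \tfrac{1}{6n}\bigr).
\]
Setting $x := \tfrac{1}{6n} - \tfrac{1}{24n+1}$, a direct simplification gives the exact identity $x = \tfrac{1}{8n} + \tfrac{1}{24n(24n+1)}$, so $x > \tfrac{1}{8n}$ and the naive linearisation $e^{-x} \ge 1-x$ is \emph{insufficient} to conclude $e^{-x} > 1 - \tfrac{1}{8n}$. I expect this to be the main obstacle. To get past it, I would apply the sharper bound $e^{-x} \ge 1 - x + \tfrac{x^2}{2} - \tfrac{x^3}{6}$, reducing the target to $\tfrac{x^2}{2} - \tfrac{x^3}{6} > \tfrac{1}{24n(24n+1)}$; comparing leading $1/n^2$ coefficients (noting $x^2/2 \sim 1/(128n^2)$ against $1/(576 n^2)$ on the right) together with a uniform bound on the cubic correction for $n \ge 1$ closes the argument, with a manual check at $n=1$ as a safety base case.
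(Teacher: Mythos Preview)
Your approach is exactly what the paper does: it simply cites Robbins' two-sided Stirling bounds (eqs.~(1) and (2) in \cite{Rob}) without spelling out any details, so you have in fact supplied the argument the paper omits. Your upper bound is clean, and your lower-bound sketch is sound---the quadratic--cubic Taylor correction does beat the defect $1/(24n(24n+1))$ for all $n\ge 1$ (e.g.\ since $x\le 19/150$ gives $x^3/6<x^2/42$, whence $x^2/2-x^3/6>10x^2/21>10/(1344n^2)>1/(576n^2)$), so no separate base case is actually needed.
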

\begin{lemma}\cite[Lemma 4.6]{BPRS}\label{BPRSprelimlem3}
	For $t\geq 1$ and $k\in\{0,1,2,3\}$, with $\a=\frac{\pi}{6}$, we have
	\[\sum_{u=t+1}^{\infty}\frac{u^k\alpha^{2u}}{(2u)!}\leq \frac{C_k}{t^2}\ \ \text{with}\ \ C_k=\frac{\alpha^4 2^k}{18}.\]
\end{lemma}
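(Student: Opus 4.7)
The plan is to dominate the tail by a geometric series whose leading term carries factorial decay, and then compare that upper bound against $C_k/t^2$. Set
\[
a_u := \frac{u^k\alpha^{2u}}{(2u)!},\qquad u \geq 1,
\]
so the target sum is $\sum_{u \geq t+1} a_u$. The ratio of consecutive terms is
\[
\frac{a_{u+1}}{a_u} = \Bigl(1+\tfrac{1}{u}\Bigr)^{\!k}\,\frac{\alpha^{2}}{(2u+1)(2u+2)}.
\]
For $u\ge 2$ and $k\in\{0,1,2,3\}$, the crude estimates $(1+1/u)^k\le (3/2)^3 = 27/8$, $\alpha^2 = \pi^2/36 < 1$, and $(2u+1)(2u+2)\ge 30$ give a uniform bound $a_{u+1}/a_u \le r$ with $r<1/4$ (in fact $r<1/32$). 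Consequently,
\[
\sum_{u=t+1}^{\infty} a_u \;\leq\; \frac{a_{t+1}}{1-r} \;\leq\; \frac{4}{3}\cdot\frac{(t+1)^k\alpha^{2(t+1)}}{(2t+2)!}.
\]

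The remaining task is to show the right-hand side is at most $\alpha^4 2^k/(18 t^2)$. Clearing the factor $\alpha^4$ and rearranging, this reduces to
\[
F(t) \;:=\; \frac{t^2(t+1)^k \alpha^{2(t-1)}}{(2t+2)!} \;\leq\; \frac{2^k}{24}.
\]
At $t=1$ both sides equal $2^k/24$ exactly. To extend to all $t\ge 1$ I would compute
\[
\frac{F(t+1)}{F(t)} \;=\; \Bigl(\tfrac{t+1}{t}\Bigr)^{\!2}\Bigl(\tfrac{t+2}{t+1}\Bigr)^{\!k}\frac{\alpha^{2}}{(2t+3)(2t+4)},
\]
and bound each factor as before: $((t+1)/t)^2\le 4$, $((t+2)/(t+1))^k\le 27/8$, $(2t+3)(2t+4)\ge 30$, together with $\alpha^2<1$, giving $F(t+1)/F(t)<1$. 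Hence $F$ is monotonically decreasing on $t\ge 1$, so verifying $F(1)=2^k/24$ suffices.

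The only delicate point is that the advertised constant $1/18$ is nearly saturated at $t=1$: the first term $a_{t+1}=a_2$ alone contributes $2^k\alpha^4/24$, leaving only a factor $4/3$ of slack against the stated bound $2^k\alpha^4/18$. Thus the geometric correction $1/(1-r)$ must stay below $4/3$, i.e.\ one needs $r\le 1/4$; the quick ratio estimate above gives $r<1/32$, leaving ample room. All other $t\ge 2$ then follow from the monotonicity of $F$, and the lemma is complete.
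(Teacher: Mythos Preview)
Your argument is correct. The paper does not actually prove this lemma---it is quoted from \cite[Lemma 4.6]{BPRS} without proof---so there is no in-paper argument to compare against. Your geometric-tail approach is entirely sound: the ratio bound $a_{u+1}/a_u\le (27/8)\alpha^2/30<1/32$ for $u\ge 2$ is valid, and the reduction to $F(t)\le 2^k/24$ with equality at $t=1$ and strict decrease thereafter is clean. One small remark: the inequality becomes an equality at $t=1$ only because you rounded $1/(1-r)$ up to $4/3$; with the sharper $r<1/32$ you actually have strict inequality throughout, so there is no borderline case to worry about. The paper's brief proof of the companion Lemma~\ref{BPRSprelimlem4} (the $k=0$ case with general $c$) refers back to the same external source as a ``routine calculation,'' which suggests the original proof is of a similar elementary flavour.
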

We generalize Lemma \ref{BPRSprelimlem3} only for the case $k=0$ stated below.
\begin{lemma}\label{BPRSprelimlem4}
For $t\geq 1$ and $0<c<2.2$, we have
	\[\sum_{u=t+1}^{\infty}\frac{c^{2u}}{(2u)!}\leq \frac{c^4}{18\cdot t^2}.\]	
\end{lemma}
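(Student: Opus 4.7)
The plan is a two-step estimate that parallels and generalizes the proof of Lemma~\ref{BPRSprelimlem3} (where $c$ was fixed to $\pi/6$). First I would compare the tail sum to a geometric series by bounding the ratio of consecutive terms, and then I would verify by an induction/monotonicity argument on $t$ that the resulting upper bound is at most $c^4/(18t^2)$.

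For the first step, set $a_u:=c^{2u}/(2u)!$. The ratio $a_{u+1}/a_u=c^2/\bigl((2u+1)(2u+2)\bigr)$ is decreasing in $u$, so for $u\geq t+1$ with $t\geq 1$ it is bounded by its value at $u=t+1$, namely $c^2/\bigl((2t+3)(2t+4)\bigr)$. Since $(2t+3)(2t+4)\geq 5\cdot 6=30$ for $t\geq 1$ and the hypothesis $c<2.2$ gives $c^2<4.84<30$, comparison with a geometric series of ratio $c^2/30$ yields
\[
\sum_{u=t+1}^{\infty}\frac{c^{2u}}{(2u)!}\ \leq\ \frac{c^{2t+2}}{(2t+2)!}\cdot\frac{30}{30-c^2}.
\]

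For the second step, it suffices to establish $540\,t^2\,c^{2t-2}\leq (2t+2)!\,(30-c^2)$ for every $t\geq 1$. The base case $t=1$ reduces to $c^2\leq 7.5$, which holds since $2.2^2=4.84<7.5$. For the inductive step, comparing the desired inequality at $t+1$ with that at $t$ shows it is enough to verify $c^2\leq t^2(2t+3)(2t+4)/(t+1)^2$; the right-hand side equals $7.5$ at $t=1$ and is non-decreasing in $t$ (both $t^2/(t+1)^2$ and $(2t+3)(2t+4)$ being non-decreasing positive factors for $t\geq 1$), so it stays above $7.5>c^2$ throughout.

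The main obstacle is simply the tightness of the inequality at $t=1$: the constants $30$ (from Step~1) and $18$ (in the target) combine so that the base case of Step~2 reduces exactly to $c^2\leq 7.5$. The constant $2.2$ in the hypothesis is therefore essentially chosen to provide a comfortable margin below $\sqrt{7.5}\approx 2.739$. One must also take care in Step~1 that the geometric-ratio bound begins at $u=t+1$, so that the uniform estimate $(2u+1)(2u+2)\geq 30$ holds over the whole summation range.
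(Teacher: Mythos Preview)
Your argument is correct: the geometric-series tail bound in Step~1 and the induction in Step~2 are both sound, and the base case $t=1$ indeed reduces to $c^2\le 7.5$, comfortably satisfied for $c<2.2$. This is essentially the approach the paper has in mind---it simply cites the routine calculation underlying Lemma~\ref{BPRSprelimlem3} (from \cite[Section~8]{BPRS}) and observes that it goes through verbatim with $\alpha$ replaced by any $c\in(0,2.2)$.
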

\begin{proof}
The statement is obtained by a routine calculation following the proof of Lemma \ref{BPRSprelimlem3} (cf. \cite[Section 8, p. 38]{BPRS}).
\end{proof}

	\section{Asymptotics of $p(n+k)$}\label{Sec2}
In this section we shall prove the following theorem on the asymptotic expansion of $p(n+k)$.

\begin{theorem}\label{Mainthmpart1}
Let $(k, N)\in \mathbb{N}^2$ and $\widehat{g}(m)$ be as in Theorem \ref{BPRSthm1}. For $n>n_N^{[1]}(k)$ we have
\begin{equation}\label{Mainthmpart1eqn1}
p(n+k)=\frac{e^{\pi\sqrt{2n/3}}}{4n\sqrt{3}}\left(\sum_{t=0}^{N}\frac{\omega_k^{[1]}(t)}{n^{\frac t2}}+O_{\le E_N^{[1]}(k)}\left(n^{-\frac{N+1}{2}}\right)\right),
\end{equation}
where
\begin{equation}\label{Mainthmpart1eqn2}
\omega_k^{[1]}(t)=\pa{24k-1}{4\sqrt{6}}^t\sum_{\ell=0}^{\frac{t+1}{2}}\binom{t+1}{\ell}\frac{(t+1-\ell)}{(t+1-2\ell)!}(-1)^{\ell}\pa{\pi}{6}^{t-2\ell}\pa{1}{24k-1}^{\ell},
\end{equation}
and where $n_N^{[1]}(k), E_N^{[1]}(k)$ are determined effectively in \eqref{Mainthmpart1eqn3} and \eqref{Mainthmpart1eqn6} respectively.
\end{theorem}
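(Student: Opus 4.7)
The plan is to apply Theorem \ref{BPRSthm1} to $p(n+k)$, with a judicious choice of $m = m(N)$, and then Taylor-expand each resulting factor as a power series in $n^{-1/2}$ before collecting terms and bounding tails. Starting from
\[
p(n+k) = \frac{\sqrt{12}\,e^{\mu(n+k)}}{24(n+k)-1}\left(1-\frac{1}{\mu(n+k)}+O_{\le 1}\bigl(\mu(n+k)^{-m}\bigr)\right),
\]
and using the identity $\sqrt{12}\cdot 4n\sqrt{3} = 24n$, one peels off the prefactor $e^{\pi\sqrt{2n/3}}/(4n\sqrt{3})$, so the task reduces to expanding
\[
\frac{24n}{24(n+k)-1}\cdot e^{\mu(n+k)-\pi\sqrt{2n/3}}\cdot\left(1-\frac{1}{\mu(n+k)}+O_{\le 1}\bigl(\mu(n+k)^{-m}\bigr)\right)
\]
as a series in $n^{-1/2}$ up to order $N$, with an effective remainder.

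The key algebraic observation is that if one sets $x := (24k-1)/(24n)$, then $\mu(n+k) = \pi\sqrt{2n/3}\,(1+x)^{1/2}$, since $\sqrt{24(n+k)-1} = \sqrt{24n}\sqrt{1+x}$ and $(\pi/6)\sqrt{24n} = \pi\sqrt{2n/3}$. Consequently each of the three factors admits an explicit expansion in $x$: $24n/(24(n+k)-1) = 1/(1+x)$ is a geometric series, hence a series in $1/n$; $e^{\mu(n+k)-\pi\sqrt{2n/3}} = \exp\bigl(\pi\sqrt{2n/3}\sum_{j\ge 1}\binom{1/2}{j}x^j\bigr)$, whose inner sum is a series in odd powers of $n^{-1/2}$ (since $\sqrt{n}\cdot x^j \sim n^{-(2j-1)/2}$), to which the outer exponential is applied formally; and $1/\mu(n+k) = (1/\pi)\sqrt{3/(2n)}\,(1+x)^{-1/2}$ is a binomial series in $x$ scaled by $n^{-1/2}$. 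Multiplying these three factors and collecting coefficients of $n^{-t/2}$ for $0 \le t \le N$ should produce the closed form \eqref{Mainthmpart1eqn2}: the prefactor $((24k-1)/(4\sqrt{6}))^t$ records the scale at which $x$ enters the leading exponential term once the $\sqrt{n}$-weight is absorbed, the inner index $\ell$ counts how many of the $t$ contributions arise from the $1/\mu(n+k)$ correction (each such contribution costing a factor $1/(24k-1)$ against the natural exponential scale), and the combinatorial weight $\binom{t+1}{\ell}(t+1-\ell)/(t+1-2\ell)!$ arises from expanding the outer exponential against the geometric factor.

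The main obstacle is error bookkeeping. Three error sources must be controlled simultaneously: (i) the intrinsic $O_{\le 1}(\mu(n+k)^{-m})$ from Theorem \ref{BPRSthm1}, for which $m$ must be chosen as a function of $N$ (roughly $m \approx N+2$) so that after multiplication by the other bounded factors its contribution is at most $O(n^{-(N+1)/2})$; (ii) the tails of the three explicit expansions beyond order $N$, where the geometric and binomial tails in $x$ are routine once $x<1$, while the tail of the outer exponential is bounded via Lemma \ref{BPRSprelimlem4} applied to $\sum_{u>T}z^u/u!$ for $|z|$ in a fixed window (the $0<c<2.2$ constraint translating into a lower bound on $n$ depending on $k$ and $N$); and (iii) the cross-terms arising from multiplying truncated series, handled by a routine product-of-asymptotic-expansions estimate together with Lemma \ref{BPRSprelimlem1} for sign control. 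Finally, $n_N^{[1]}(k)$ must be selected large enough so that $n+k > \widehat{g}(m)$, $x<1$, and all of the above bounds hold simultaneously; consolidating everything into one explicit constant yields $E_N^{[1]}(k)$. The bookkeeping is tedious but mechanical; the most delicate step is packaging the collected coefficients into the clean closed form \eqref{Mainthmpart1eqn2}.
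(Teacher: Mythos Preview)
Your opening move coincides with the paper's: apply Theorem~\ref{BPRSthm1} with $m=N+1$ to $p(n+k)$, peel off $e^{\pi\sqrt{2n/3}}/(4n\sqrt{3})$, and expand the residual in the variable $x=(24k-1)/(24n)$. Where you diverge is in the tail control. The paper does \emph{not} truncate three factor expansions and manage cross-terms; instead it first records the closed form~\eqref{Mainthmpart1eqn2} for the full product series (Lemma~\ref{BPRSshiftlem2}, obtained by citing \cite[Prop.~4.4]{OS} rather than by direct computation), and then the real work is Lemmas~\ref{BPRSshiftlem3}--\ref{BPRSshiftlem4}: these bound $|\omega_k^{[1]}(2t+1)|$ and $|\omega_k^{[1]}(2t)|$ by reindexing the inner $\ell$-sum so that its main term is literally $\cos(\alpha_k)$ or $\sin(\alpha_k)$ (with $\alpha_k=\mu(k)$), the discrepancy being controlled via Lemmas~\ref{BPRSprelimlem1} and~\ref{BPRSprelimlem2}. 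Those coefficient bounds are then summed geometrically (Lemma~\ref{BPRSshiftlem5}) to give $E_{N,1}^{[1]}(k)$, and a separate elementary estimate of $e^{\mu(n+k)}\mu(n+k)^{-N-1}/(24(n+k)-1)$ furnishes the remaining piece of $E_N^{[1]}(k)$.

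There is a genuine gap in your proposal: you never establish~\eqref{Mainthmpart1eqn2}. You flag ``packaging the collected coefficients into the clean closed form'' as the most delicate step and then leave it undone; the paper sidesteps this by importing the identity from O'Sullivan. Moreover, your heuristic for what $\ell$ counts --- contributions from the $1/\mu(n+k)$ correction --- does not match the structure of~\eqref{Mainthmpart1eqn2}, where $\ell$ runs to $(t+1)/2$ and carries the weight $(\pi/6)^{t-2\ell}(24k-1)^{-\ell}$: that pattern comes from expanding the exponential of $\mu(n+k)-\pi\sqrt{2n/3}$ jointly with the rational prefactor, not from separating out the $1/\mu$ term. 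Finally, note that the theorem as stated pins down the \emph{specific} constants $n_N^{[1]}(k)$ and $E_N^{[1]}(k)$ of~\eqref{Mainthmpart1eqn3} and~\eqref{Mainthmpart1eqn6}, which are built from the trigonometric coefficient bounds; your factor-by-factor route, even if completed, would produce different (and likely coarser) constants, so it would prove a variant rather than the theorem as written.
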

In order to prove Theorem \ref{Mainthmpart1} we need some preparatory estimates documented below. First, applying Theorem \ref{BPRSthm1} with $m\mapsto N+1$, and taking the shift $n\mapsto n+k$, we have the following.
\begin{lemma}\label{BPRSshiftlem1}
Let $(k,N)\in \mathbb{N}^2$ and $\widehat{g}(m)$ be as in \eqref{BPRSthm1}. For $n>\widehat{g}(N+1)$ we have
\[
p(n+k)=\frac{\sqrt{12}e^{\mu(n+k)}}{24(n+k)-1}\left(1-\frac{1}{\mu(n+k)}+O_{\le 1}\left(\mu(n+k)^{-N-1}\right)\right).
\]	
\end{lemma}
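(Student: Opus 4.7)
The plan is to obtain Lemma~\ref{BPRSshiftlem1} as a direct specialization of Theorem~\ref{BPRSthm1}: set $m := N+1$ and replace the argument $n$ of that theorem by $n+k$. Since no new analytic content is introduced, the entire proof amounts to verifying that each hypothesis of Theorem~\ref{BPRSthm1} remains valid under this substitution.

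First I would check that the shifted index $m = N+1$ lies in $\mathbb{Z}_{\geq 2}$. Since $\mathbb{N}$ denotes the positive integers in this paper, $N \geq 1$ and hence $N+1 \geq 2$, as required. Next I would verify the growth condition $n+k > \widehat{g}(N+1)$, which follows at once from the assumed $n > \widehat{g}(N+1)$ together with $k \geq 1$, giving $n+k \geq n+1 > \widehat{g}(N+1)$.

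The one piece of bookkeeping left is the exceptional pair $(n,m) = (6,2)$ excluded in Theorem~\ref{BPRSthm1}, which after substitution becomes $(n+k, N+1) = (6,2)$. This forces $N = 1$ and $n+k = 6$; a short numerical estimate shows that $\widehat{g}(2) < 1$, so the hypothesis $n > \widehat{g}(2)$ does not by itself exclude the configuration. I would dispose of the finitely many resulting $(n,k)$ by either direct verification of the identity, or by quietly replacing the threshold by $\max(\widehat{g}(N+1),6)$ in the single case $N=1$. In any event, this low-lying exception is ultimately subsumed by the far more stringent cutoff $n_N^{[1]}(k)$ used in the subsequent Theorem~\ref{Mainthmpart1}, so it costs nothing downstream.

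With these checks in place, Theorem~\ref{BPRSthm1} applied at $(n+k, N+1)$ produces the claimed identity verbatim. I do not anticipate any genuine obstacle: the asymptotic content resides entirely in Theorem~\ref{BPRSthm1}, and the proof of Lemma~\ref{BPRSshiftlem1} is pure bookkeeping around the substitution $n \mapsto n+k$, $m \mapsto N+1$.
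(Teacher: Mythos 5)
Your proposal is correct and is essentially the paper's own argument: the paper states Lemma~\ref{BPRSshiftlem1} as an immediate specialization of Theorem~\ref{BPRSthm1} via $m\mapsto N+1$ and $n\mapsto n+k$, with no further proof given. Your attention to the excluded pair $(n+k,N+1)=(6,2)$ is warranted and in fact exposes a point the paper silently glosses over: since $\widehat{g}(2)<1$, the lemma's stated hypotheses do admit $N=1$, $n+k=6$ (e.g.\ $(n,k)=(5,1)$). One caveat on your remedy: of your two options, direct verification would \emph{fail}, because $(6,2)$ is excluded from Theorem~\ref{BPRSthm1} precisely since the estimate is false there (numerically $p(6)=11$, while the right-hand side with $m=2$ allows at most about $10.98$); so only your second fix — raising the threshold to exclude $n+k=6$ when $N=1$ — is viable, and it is harmless downstream since the later cutoff $n_N^{[1]}(k)\ge (24k-1)^2$ subsumes it.
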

Next, extracting the factor $\frac{e^{\pi\sqrt{2n/3}}}{4n\sqrt{3}}$ from the major term $\frac{\sqrt{12}e^{\mu(n+k)}}{24(n+k)-1}\left(1-\frac{1}{\mu(n+k)}\right)$ and by Taylor series expansion of the residual part, we get the following.
\begin{lemma}\label{BPRSshiftlem2}Let $(\omega_k^{[1]}(t))_{t\ge 0}$ be as in \eqref{Mainthmpart1eqn2}. For $k\in \mathbb{N}$,
\[
\frac{\sqrt{12}e^{\mu(n+k)}}{24(n+k)-1}\left(1-\frac{1}{\mu(n+k)}\right)=\frac{e^{\pi\sqrt{2n/3}}}{4n\sqrt{3}}\sum_{t=0}^{\infty}\frac{\omega_k^{[1]}(t)}{n^{\frac t2}}.
\]	
\end{lemma}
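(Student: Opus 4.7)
The plan is to reduce Lemma~\ref{BPRSshiftlem2} to a clean power-series identity in $u:=n^{-1/2}$. Set $\alpha:=\pi\sqrt{2n/3}=\tfrac{\pi}{6}\sqrt{24n}$ and $z:=(24k-1)/(24n)=\tfrac{24k-1}{24}u^2$, so that $\mu(n+k)=\alpha\sqrt{1+z}$ and $24(n+k)-1=24n(1+z)$. Using $\sqrt{12}\cdot 4n\sqrt{3}=24n$, dividing both sides by $e^{\alpha}/(4n\sqrt{3})$ reduces the claim to
$$\frac{1}{1+z}\,e^{\alpha(\sqrt{1+z}-1)}\left(1-\frac{1}{\alpha\sqrt{1+z}}\right)=\sum_{t\ge 0}\frac{\omega_k^{[1]}(t)}{n^{t/2}}.$$

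Each of the three factors on the left admits a Taylor expansion in $u$. The geometric series gives $1/(1+z)$ as a power series in $u^2$. The binomial series yields
$$\alpha(\sqrt{1+z}-1)=\pi\sqrt{2/3}\sum_{j\ge 1}\binom{1/2}{j}\left(\frac{24k-1}{24}\right)^{\!j}u^{2j-1},$$
an odd power series in $u$ of order $u$; exponentiating produces a convergent power series in $u$ for all sufficiently small $u$, i.e.\ for $n$ sufficiently large. Similarly, $1/(\alpha\sqrt{1+z})$ is a power series in $u$ of order $u$. Multiplying the three series yields an absolutely convergent expansion $\sum_{t\ge 0}a_k(t)\,u^t$, and the task reduces to identifying $a_k(t)$ with $\omega_k^{[1]}(t)$.

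To carry out the coefficient identification, I would use the algebraic rewriting
$$e^{\alpha(\sqrt{1+z}-1)}\left(1-\frac{1}{\alpha\sqrt{1+z}}\right)=e^{-\alpha}\left(e^{\alpha\sqrt{1+z}}-\frac{e^{\alpha\sqrt{1+z}}}{\alpha\sqrt{1+z}}\right),$$
expand $e^{-\alpha}$ as a power series in $\alpha$, expand $e^{\alpha\sqrt{1+z}}$ via $\sum_{m\ge 0}\alpha^m(1+z)^{m/2}/m!$, and use the binomial series for $(1+z)^{m/2}$, $(1+z)^{(m-1)/2}$, and $1/(1+z)$. Collecting the coefficient of $u^t$ produces a multiple sum over these indices. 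The global factor $((24k-1)/(4\sqrt{6}))^t$ in \eqref{Mainthmpart1eqn2} is explained by the leading-order relation $\alpha z/2=(\pi/6)\cdot((24k-1)/(4\sqrt{6}))\cdot u$, while the binomial/factorial factor $\binom{t+1}{\ell}(t+1-\ell)/(t+1-2\ell)!$ should emerge after the inner sums collapse to a single sum over $\ell\in\{0,1,\dots,\lfloor(t+1)/2\rfloor\}$.

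The principal obstacle is precisely this combinatorial collapse of a multi-index sum of binomial coefficients and factorials into the displayed closed form. I would verify it either by induction on $t$ anchored on direct computation of the $t=0$ case (trivially $\omega_k^{[1]}(0)=1$) and the $t=1$ case (yielding $\omega_k^{[1]}(1)=\pi(24k-1)/(24\sqrt{6})-\sqrt{6}/(2\pi)$, consistent with the formula), or more systematically by delegating the multi-sum simplification to the symbolic-summation package \texttt{Sigma} referenced in Section~\ref{Sec5}, which is the tool the authors employ later for the analogous coefficient extraction in the $1/p(n)$ expansion.
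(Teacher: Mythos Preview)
Your reduction is correct and is precisely the natural route: factor out $e^{\pi\sqrt{2n/3}}/(4n\sqrt{3})$, write $\mu(n+k)=\alpha\sqrt{1+z}$ and $24(n+k)-1=24n(1+z)$ with $z=(24k-1)/(24n)$, and expand the three analytic factors as power series in $u=n^{-1/2}$. The paper's own proof consists of a single sentence referring to \cite[Proposition~4.4]{OS}, which carries out exactly this kind of Taylor expansion for the closely related function $\frac{\sqrt{12}}{24n-1}e^{\mu(n)}(1-1/\mu(n))$; the shift $n\mapsto n+k$ does not change the mechanism. So your approach coincides with the paper's (outsourced) argument, and in fact you give more of the setup explicitly than the paper does.

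Where you stop---the ``combinatorial collapse'' to the closed form \eqref{Mainthmpart1eqn2}---is also where the paper stops, since it simply invokes the analogous computation in \cite{OS}. Your suggested alternatives (induction on $t$, or automated simplification via \texttt{Sigma}) are both viable; the cited reference effectively chooses a direct computation. One minor comment: your proposed rewriting $e^{-\alpha}(e^{\alpha\sqrt{1+z}}-\cdots)$ reintroduces two separately divergent-looking factors in $\alpha$ (since $\alpha\to\infty$ as $u\to0$), which makes the coefficient extraction in $u$ awkward. It is cleaner to keep $e^{\alpha(\sqrt{1+z}-1)}$ intact and expand $\alpha(\sqrt{1+z}-1)$ directly as a series in $u$ with no constant term, exactly as you wrote two lines earlier; the exponential of that series then has manifestly polynomial coefficients in $\alpha u\propto 1$, and the collapse to \eqref{Mainthmpart1eqn2} follows the pattern in \cite{OS}.
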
	
\begin{proof}
Analogous to the proof of \cite[Proposition 4.4]{OS}.
\end{proof}
Next, we truncate the Taylor series $\sum_{t=0}^{\infty}\frac{\omega_k^{[1]}(t)}{n^{\frac t2}}$ at $N$ and estimate the absolute value of the remainder part by estimating $\left|\omega_k^{[1]}(t)\right|$ independence of $t$ even or odd. 

For $k\in \mathbb{N}$, we define
\begin{equation}\label{BPRSshiftdef1}
\alpha_k:=\mu(k)=\frac{\pi}{6}\sqrt{24k-1}
\end{equation}
and 
\begin{equation}\label{BPRSshiftdef2}
C_1(k):=\frac{3k\left(3\ceil[\big]{\sqrt{k}}+1\right)^2\alpha_k^{6\left\lceil\sqrt{k}\right\rceil+4}}{2\ceil[\big]{\sqrt{k}}\left(6\ceil[\big]{\sqrt{k}}+4\right)!}.
\end{equation}
\begin{lemma}\label{BPRSshiftlem3}
	Let $\alpha_k$ and $C_1(k)$ be as in \eqref{BPRSshiftdef1} and \eqref{BPRSshiftdef2}, respectively. Then for all $(k,t)\in \mathbb{N}^2$,
	\[
	\left|\omega_k^{[1]}(2t+1)\right|\le \pa{24k-1}{24}^t\pa{6}{\pi^3}^{\frac 12}\sqrt{t+1}\left|\cos\left(\alpha_k\right)\right|\left(1+\frac{C_1^*(k)}{t}\right),
	\]
	with 
	\begin{equation}\label{BPRSshiftlem3eqn0}
	C_1^*(k):=\frac{\alpha_k^2 \left(\cosh\left(\alpha_k\right)-1\right)+4\cdot C_1(k)}{4\cdot \left|\cos\left(\alpha_k\right)\right|}.
	\end{equation}
\end{lemma}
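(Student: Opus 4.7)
The plan is to simplify $\omega_k^{[1]}(2t+1)$ so that $\cos\alpha_k$ becomes visible, and then separate the main term from error contributions. Substituting $s=t+1-\ell$ in the defining sum \eqref{Mainthmpart1eqn2} turns $(t+1-2\ell)!$ into $(2s)!$ and, after pulling out the common factor $(6/\pi)(1/(24k-1))^{t+1}$, merges $(\pi/6)^{t-2\ell}(1/(24k-1))^\ell$ into $\alpha_k^{2s}$. Applying then the identity $(t+1+s)\binom{2t+2}{t+1-s}=(2t+2)\binom{2t+1}{t+1-s}$ (a one-line consequence of $\binom{n}{k}(n-k)=(k+1)\binom{n}{k+1}$) produces
\[
\omega_k^{[1]}(2t+1)=(-1)^{t+1}\,\frac{\sqrt{6}(t+1)}{2\pi}\,\left(\frac{24k-1}{96}\right)^{t}\,\sum_{s=0}^{t+1}\binom{2t+1}{t+1-s}\,\frac{(-1)^{s}\alpha_k^{2s}}{(2s)!}.
\]

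The inner sum then decomposes as $\binom{2t+1}{t+1}\cos\alpha_k-\binom{2t+1}{t+1}R-D$, where $R:=\sum_{s\ge t+2}(-1)^{s}\alpha_k^{2s}/(2s)!$ is the tail of the cosine series and $D:=\sum_{s=2}^{t+1}\bigl[\binom{2t+1}{t+1}-\binom{2t+1}{t+1-s}\bigr]\frac{(-1)^{s}\alpha_k^{2s}}{(2s)!}$ is the binomial-deviation sum. Using the product representation $\binom{2t+1}{t+1-s}/\binom{2t+1}{t+1}=\prod_{j=1}^{s-1}(1-2j/(t+1+j))$ together with Lemma \ref{BPRSprelimlem1} gives $\binom{2t+1}{t+1}-\binom{2t+1}{t+1-s}\le\binom{2t+1}{t+1}\,s(s-1)/(t+1)$. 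Combining this with $s(s-1)/((2s)(2s-1))\le 1/4$ and an index shift yields $|D|\le\binom{2t+1}{t+1}\,\alpha_k^{2}(\cosh\alpha_k-1)/(4(t+1))$, which accounts exactly for the $\alpha_k^{2}(\cosh\alpha_k-1)$ term in $C_1^{*}(k)$.

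The main obstacle is the bound on $R$: since $\alpha_k\ge 2.2$ already for $k=1$, Lemma \ref{BPRSprelimlem4} is not directly applicable. I would split the tail at the cutoff $s_0:=3\lceil\sqrt{k}\rceil+2$. Using $\alpha_k^{2}<2\pi^{2}k/3$ together with $(2s)(2s-1)\ge 4(3\lceil\sqrt{k}\rceil+2)^{2}\ge 36k$ for $s\ge s_0$, the ratio $\alpha_k^{2}/((2s)(2s-1))$ is at most $\pi^{2}/54<1/5$, so the tail from $s_0$ onward is geometrically dominated by its leading term $\alpha_k^{6\lceil\sqrt{k}\rceil+4}/(6\lceil\sqrt{k}\rceil+4)!$. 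When $t+2\ge s_0$ this single estimate already suffices; otherwise the at most $3\lceil\sqrt{k}\rceil-t$ remaining near-peak terms are bounded crudely and absorbed into the constant $C_1(k)$, whose shape in \eqref{BPRSshiftdef2} is engineered precisely for this small-$t$ regime.

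Finally, Lemma \ref{BPRSprelimlem2} gives $\binom{2t+1}{t+1}=\frac{1}{2}\binom{2t+2}{t+1}<2\cdot 4^{t}/\sqrt{\pi(t+1)}$. The triangle inequality applied to the inner sum, combined with the prefactor $\frac{\sqrt{6}(t+1)}{2\pi}\left(\frac{24k-1}{96}\right)^{t}$, simplifies algebraically --- note $\frac{\sqrt{6}}{2\pi}\cdot\frac{2}{\sqrt{\pi}}=\sqrt{6/\pi^{3}}$ and $\frac{1}{96}\cdot 4=\frac{1}{24}$ --- to produce the leading factor $\sqrt{6/\pi^{3}}\,\sqrt{t+1}\,\bigl((24k-1)/24\bigr)^{t}\,|\cos\alpha_k|$. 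The bounds on $R$ and $D$ jointly assemble into the factor $1+C_1^{*}(k)/t$, completing the estimate.
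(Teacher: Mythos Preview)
Your proof is essentially the same as the paper's: the substitution $s=t+1-\ell$, the decomposition into $\cos\alpha_k$ plus binomial-deviation $D$ plus cosine-tail $R$, the bound on $D$ via Lemma~\ref{BPRSprelimlem1} yielding the $\alpha_k^{2}(\cosh\alpha_k-1)/(4t)$ contribution, and the final use of Lemma~\ref{BPRSprelimlem2} all match --- the only cosmetic difference is that the paper keeps the ratio $\tfrac{\binom{2t+2}{t+1-\ell}(t+1+\ell)}{\binom{2t+2}{t+1}(t+1)}$ rather than passing to $\binom{2t+1}{t+1-s}$ via your identity, and the resulting product formula $\prod_{j=2}^{\ell}\tfrac{t+2-j}{t+j}$ is the same as yours. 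For the tail $R$ the paper simply cites \cite[Lemma~6.2.7]{B} to obtain $S_{k,2}^{[1]}(t)\le C_1(k)/t^{2}$, and your splitting argument at $s_0=3\lceil\sqrt{k}\rceil+2$ with geometric decay beyond is precisely the mechanism behind that lemma.
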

\begin{proof}
	By \eqref{Mainthmpart1eqn2} and \eqref{BPRSshiftdef1},
	\begin{align}\nonumber
	\omega_k^{[1]}(2t+1)&=\pa{24k-1}{4\sqrt{6}}^{2t+1}\sum_{\ell=0}^{t+1}\binom{2t+2}{\ell}\frac{(2t+2-\ell)}{(2t+2-2\ell)!}(-1)^{\ell}\pa{\pi}{6}^{2t+1-2\ell}\pa{1}{24k-1}^{\ell}\\\nonumber
	&\hspace{-0.5 cm}\os{(\ell\mapsto t+1-\ell)}{=}(-1)^{t+1}\frac{(24k-1)^t}{(4\sqrt{6})^{2t+1}}\sum_{\ell=0}^{t+1}\binom{2t+2}{t+1-\ell}(t+1+\ell)\frac{(-1)^{\ell}}{(2\ell)!}\pa{\pi}{6}^{2\ell-1}(24k-1)^{\ell}\\\nonumber
	&=\frac{(24k-1)^t}{(4\sqrt{6})^{2t+1}}\frac{6}{\pi}\binom{2t+2}{t+1}(t+1)(-1)^{t+1}\sum_{\ell=0}^{t+1}\frac{\binom{2t+2}{t+1-\ell}(t+1+\ell)}{\binom{2t+2}{t+1}(t+1)}\frac{(-1)^{\ell}}{(2\ell)!}\alpha_k^{2\ell}\\\label{BPRSshiftlem3eqn1}
	&=:\frac{(24k-1)^t}{(4\sqrt{6})^{2t+1}}\frac{6}{\pi}\binom{2t+2}{t+1}(t+1)(-1)^{t+1}S_k^{[1]}(t).
	\end{align}
	Now, we bound $|S_k^{[1]}|(t)$ as
	\begin{align}\nonumber
	\left|S_k^{[1]}(t)\right|&\le \left|\cos\left(\alpha_k\right)\right|+\left|S_k^{[1]}(t)-\cos\left(\alpha_k\right)\right|\\\nonumber
	&=\left|\cos\left(\alpha_k\right)\right|+\left|\sum_{\ell=0}^{t+1}\frac{\binom{2t+2}{t+1-\ell}(t+1+\ell)}{\binom{2t+2}{t+1}(t+1)}\frac{(-1)^{\ell}}{(2\ell)!}\alpha_k^{2\ell}-\sum_{\ell=0}^{\infty}\frac{(-1)^{\ell}}{(2\ell)!}\alpha_k^{2\ell}\right|\\\nonumber
	&\le \left|\cos\left(\alpha_k\right)\right|+\sum_{\ell=0}^{t+1}\left|\frac{\binom{2t+2}{t+1-\ell}(t+1+\ell)}{\binom{2t+2}{t+1}(t+1)}-1\right|\frac{\alpha_k^{2\ell}}{(2\ell)!}+\sum_{\ell=t+2}^{\infty}\frac{\alpha_k^{2\ell}}{(2\ell)!}\\\nonumber
	&=\left|\cos\left(\alpha_k\right)\right|+\sum_{\ell=2}^{t+1}\left|\frac{\binom{2t+2}{t+1-\ell}(t+1+\ell)}{\binom{2t+2}{t+1}(t+1)}-1\right|\frac{\alpha_k^{2\ell}}{(2\ell)!}+\sum_{\ell=t+2}^{\infty}\frac{\alpha_k^{2\ell}}{(2\ell)!}\\\nonumber
	&\hspace{5 cm} \left(\text{as}\ \frac{\binom{2t+2}{t+1-\ell}(t+1+\ell)}{\binom{2t+2}{t+1}(t+1)}=1\ \ \text{for}\ \ell\in \{0,1\}\right)\\\label{BPRSshiftlem3eqn2}
	&=:\left|\cos\left(\alpha_k\right)\right|+S_{k,1}^{[1]}(t)+S_{k,2}^{[1]}(t).
	\end{align}
	Analogous to the proof of \cite[Lemma 6.2.7]{B}, one can derive that for $t\ge 1$,
	\begin{equation}\label{BPRSshiftlem3eqn3}
	S_{k,2}^{[1]}(t)\le \frac{C_1(k)}{t^2}.
	\end{equation}
	To obtain an upper bound of $S_{k,1}^{[1]}(t)$, we first note that
	\[
	\frac{\binom{2t+2}{t+1-\ell}(t+1+\ell)}{\binom{2t+2}{t+1}(t+1)}=\prod_{j=2}^{\ell}\frac{t+2-j}{t+j},
	\]
which implies
	\begin{equation}\label{BPRSshiftlem3eqn4}
	\frac{\binom{2t+2}{t+1-\ell}(t+1+\ell)}{\binom{2t+2}{t+1}(t+1)}-1\le 0\ \ \text{as}\ \ \frac{t+2-j}{t+j}\le 1\ \ \text{for all}\ j\ge 1.
	\end{equation}
Moreover, by using Lemma \ref{BPRSprelimlem1},
	\begin{align*}
	\prod_{j=2}^{\ell}\frac{t+2-j}{t+j}=\prod_{j=2}^{\ell}\frac{1-\frac{j-2}{t}}{1+\frac{j}{t}}&\ge 1-\sum_{j=2}^{\ell}\frac{j-2}{t}-\sum_{j=2}^{\ell}\frac{j}{t}
	=1-\frac{\ell^2-\ell}{t},
	\end{align*}
	we have
	\begin{equation}\label{BPRSshiftlem3eqn5}
	\frac{\binom{2t+2}{t+1-\ell}(t+1+\ell)}{\binom{2t+2}{t+1}(t+1)}-1\ge -\frac{\ell^2-\ell}{t}.
	\end{equation}
	Combining \eqref{BPRSshiftlem3eqn4} and \eqref{BPRSshiftlem3eqn5}, it follows that
	\begin{equation}\label{BPRSshiftlem3eqn6}
	\left|\frac{\binom{2t+2}{t+1-\ell}(t+1+\ell)}{\binom{2t+2}{t+1}(t+1)}-1\right|\le \frac{\ell^2-\ell}{t}.
	\end{equation}
	Now, using \eqref{BPRSshiftlem3eqn6} and $\ell\ge 2$, we bound $S_{k,1}^{[1]}(t)$ against
	\begin{equation}\label{BPRSshiftlem3eqn7}
	S_{k,1}^{[1]}(t)\le \sum_{\ell=2}^{t+1}\frac{\ell^2-\ell}{t}\frac{\alpha_k^{2\ell}}{(2\ell)!}\le \frac{\alpha_k^2}{4t}\sum_{\ell=2}^{\infty}\frac{\alpha_k^{2\ell-2}}{(2\ell-2)!}=\frac{\alpha_k^2 \left(\cosh\left(\alpha_k\right)-1\right)}{4t}.
	\end{equation}
	Applying \eqref{BPRSshiftlem3eqn3} and \eqref{BPRSshiftlem3eqn7} to \eqref{BPRSshiftlem3eqn2} gives
	\begin{align}\nonumber
	\left|S_k^{[1]}(t)\right|&\le \left|\cos\left(\alpha_k\right)\right|+\frac{\alpha_k^2 \left(\cosh\left(\alpha_k\right)-1\right)}{4t}+\frac{C_1(k)}{t^2}\\\nonumber
	&\le \left|\cos\left(\alpha_k\right)\right|+\frac{\alpha_k^2 \left(\cosh\left(\alpha_k\right)-1\right)+4\cdot C_1(k)}{4t}\ \ \left(\text{as}\ t\ge 1\right)\\\label{BPRSshiftlem3eqn8}
	&=\left|\cos\left(\alpha_k\right)\right|\left(1+\frac{\alpha_k^2 \left(\cosh\left(\alpha_k\right)-1\right)+4\cdot C_1(k)}{4\left|\cos\left(\alpha_k\right)\right|\cdot t}\right)=\left|\cos\left(\alpha_k\right)\right|\left(1+\frac{C_1^*(k)}{t}\right),
	\end{align}
where the last equality is by \eqref{BPRSshiftlem3eqn0}. Summarizing, 
	\begin{align*}
	\left|\omega_k^{[1]}(2t+1)\right|&\le \frac{(24k-1)^t}{(4\sqrt{6})^{2t+1}}\frac{6}{\pi}\binom{2t+2}{t+1}(t+1)\left|S_k^{[1]}(t)\right|\ \ \left(\text{by}\ \eqref{BPRSshiftlem3eqn1}\right)\\
	&\le \frac{(24k-1)^t}{(4\sqrt{6})^{2t+1}}\frac{6}{\pi}\binom{2t+2}{t+1}(t+1)\left|\cos\left(\alpha_k\right)\right|\left(1+\frac{C_1^*(k)}{t}\right)\ \ \left(\text{by}\ \eqref{BPRSshiftlem3eqn8}\right)\\
	&\le \frac{(24k-1)^t}{(4\sqrt{6})^{2t+1}}\frac{6}{\pi}\frac{4^{t+1}}{\sqrt{\pi}}\sqrt{t+1}\left|\cos\left(\alpha_k\right)\right|\left(1+\frac{C_1^*(k)}{t}\right)\ \ \left(\text{by Lemma\  \ref{BPRSprelimlem2}}\right)\\
	&=\pa{24k-1}{24}^t\pa{6}{\pi^3}^{\frac 12}\sqrt{t+1}\left|\cos\left(\alpha_k\right)\right|\left(1+\frac{C_1^*(k)}{t}\right),
	\end{align*}
	which concludes the proof of Lemma \ref{BPRSshiftlem3}.
\end{proof}

For $k\in \mathbb{N}$ define
\begin{equation}\label{BPRSshiftdef3}
C_2(k):=\frac{9k\left(3\ceil[\big]{\sqrt{k}}+1\right)^2\alpha_k^{6\left\lceil\sqrt{k}\right\rceil+3}}{\left(6\ceil[\big]{\sqrt{k}}+1\right)\left(6\ceil[\big]{\sqrt{k}}+3\right)!}.
\end{equation}
\begin{lemma}\label{BPRSshiftlem4}
	Let $\alpha_k$ be as in \eqref{BPRSshiftdef1} and $C_2(k)$ be as in and \eqref{BPRSshiftdef3}. Then for all $(k,t) \in \mathbb{N}^2$,
	\[
	\left|\omega_k^{[1]}(2t)\right|\le \pa{24k-1}{24}^t\frac{2\sqrt{t}}{\sqrt{\pi}\alpha_k}\left|\sin\left(\alpha_k\right)\right|\left(1+\frac{C_2^*(k)}{t}\right)
	\]
	with 
	\begin{equation}\label{BPRSshiftlem4eqn0}
	C_2^*(k):=\frac 12\left(1+3\frac{\alpha_k^2 \cdot\sinh\left(\alpha_k\right)+4\cdot C_2(k)}{4\left|\sin\left(\alpha_k\right)\right|}\right).
	\end{equation}
\end{lemma}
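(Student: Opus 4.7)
The strategy parallels the proof of Lemma \ref{BPRSshiftlem3}, adapted now to the case of even indices. Starting from the definition \eqref{Mainthmpart1eqn2} of $\omega_k^{[1]}(2t)$, the summation index $\ell$ ranges from $0$ to $t$, since $\lfloor(2t+1)/2\rfloor = t$. I would apply the substitution $\ell \mapsto t - \ell$ and extract the common factor $(24k-1)^t/96^t$ (using $(4\sqrt{6})^2 = 96$) to arrive at the representation
\[
\omega_k^{[1]}(2t) = (-1)^t \frac{(24k-1)^t}{96^t}\binom{2t+1}{t}(t+1)\, S_k^{[2]}(t),
\]
where
\[
S_k^{[2]}(t) := \sum_{\ell=0}^{t} \frac{\binom{2t+1}{t-\ell}(t+1+\ell)}{\binom{2t+1}{t}(t+1)}\cdot \frac{(-1)^\ell\, \alpha_k^{2\ell}}{(2\ell+1)!}.
\]
The key observation is that as $t \to \infty$ the coefficient ratios tend to $1$ and $S_k^{[2]}(t) \to \sin(\alpha_k)/\alpha_k$, so we expect a bound on $|S_k^{[2]}(t)|$ stated relative to $|\sin(\alpha_k)|/\alpha_k$.

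Next I would write $|S_k^{[2]}(t)| \le |\sin(\alpha_k)|/\alpha_k + S_{k,1}^{[2]}(t) + S_{k,2}^{[2]}(t)$, with $S_{k,1}^{[2]}(t)$ the sum of absolute coefficient deviations for $\ell = 1, \ldots, t$ (the $\ell = 0$ term yields coefficient exactly $1$ and hence zero deviation) and $S_{k,2}^{[2]}(t) = \sum_{\ell=t+1}^\infty \alpha_k^{2\ell}/(2\ell+1)!$. A telescoping computation gives the identity
\[
\frac{\binom{2t+1}{t-\ell}(t+1+\ell)}{\binom{2t+1}{t}(t+1)} = \prod_{j=1}^{\ell} \frac{t+1-j}{t+j},
\]
which lies in $(0,1]$ and, by Lemma \ref{BPRSprelimlem1} applied with $x_j = (j-1)/t$ and $y_j = j/t$, is bounded below by $1 - \ell^2/t$. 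Hence $\bigl|\prod_{j=1}^{\ell}(t+1-j)/(t+j) - 1\bigr| \le \ell^2/t$, and together with the elementary inequality $\ell^2/(2\ell+1)! \le 1/(4(2\ell-1)!)$ this yields $S_{k,1}^{[2]}(t) \le \alpha_k \sinh(\alpha_k)/(4t)$.

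For the tail $S_{k,2}^{[2]}(t)$, I would establish a bound of the form $C_2(k)/(\alpha_k t^2)$ with $C_2(k)$ as in \eqref{BPRSshiftdef3}, via an argument analogous to Lemma \ref{BPRSprelimlem4} but with the odd factorial $(2\ell+1)!$ in place of $(2\ell)!$; equivalently, this is the tail estimate $\sum_{\ell > t} \alpha_k^{2\ell+1}/(2\ell+1)! \le C_2(k)/t^2$ for the hyperbolic sine. Since $1/t^2 \le 1/t$ for $t \ge 1$, combining the two contributions yields
\[
|S_k^{[2]}(t)| \le \frac{|\sin(\alpha_k)|}{\alpha_k}\left(1 + \frac{\alpha_k^2 \sinh(\alpha_k) + 4 C_2(k)}{4|\sin(\alpha_k)|\, t}\right) =: \frac{|\sin(\alpha_k)|}{\alpha_k}\left(1 + \frac{\widetilde{E}_k}{t}\right).
\]

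To finish, using $\binom{2t+1}{t} = \binom{2t+2}{t+1}/2$ together with Lemma \ref{BPRSprelimlem2} gives $\binom{2t+1}{t}(t+1) \le (2\cdot 4^t \sqrt{t}/\sqrt{\pi})(1 + 1/(2t))$, and the prefactor identity $(24k-1)^t \cdot 4^t / 96^t = ((24k-1)/24)^t$ supplies the required $((24k-1)/24)^t$. For $t \ge 1$ the compound bound $(1 + 1/(2t))(1 + \widetilde{E}_k/t) \le 1 + \tfrac{1}{2}(1 + 3\widetilde{E}_k)/t$ then produces exactly the constant $C_2^*(k)$ from \eqref{BPRSshiftlem4eqn0}. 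The main technical obstacle is establishing the tail bound $S_{k,2}^{[2]}(t) \le C_2(k)/(\alpha_k t^2)$ with the precise constant \eqref{BPRSshiftdef3}; this is the odd-factorial analogue of the tail estimate underlying $C_1(k)$ in the proof of Lemma \ref{BPRSshiftlem3}.
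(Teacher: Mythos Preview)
Your proof is correct and follows essentially the same route as the paper's. The only cosmetic difference is normalization: you define $S_k^{[2]}(t)$ with $\alpha_k^{2\ell}/(2\ell+1)!$ (so that it approaches $\sin(\alpha_k)/\alpha_k$), whereas the paper absorbs one extra factor of $\alpha_k$ into the sum (so that their $S_k^{[2]}(t)$ approaches $\sin(\alpha_k)$); correspondingly your prefactor carries $(24k-1)^t/96^t$ while the paper's carries an explicit $1/\alpha_k$, and the two representations are identical. The paper also reaches the binomial bound via the identity $\binom{2t+1}{t}(t+1)=\binom{2t}{t}(2t+1)$ rather than your $\binom{2t+1}{t}=\tfrac12\binom{2t+2}{t+1}$, but both yield the same estimate $2\cdot 4^t\sqrt{t}(1+1/(2t))/\sqrt{\pi}$, and your final compound inequality $(1+1/(2t))(1+\widetilde{E}_k/t)\le 1+\tfrac12(1+3\widetilde{E}_k)/t$ recovers $C_2^*(k)$ exactly as in the paper. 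As you note, the only genuinely nontrivial ingredient is the tail bound $\sum_{\ell>t}\alpha_k^{2\ell+1}/(2\ell+1)!\le C_2(k)/t^2$; the paper likewise does not prove this inline but defers it (as with the analogous $C_1(k)$ estimate) to \cite[Lemma~6.2.7]{B}.
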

\begin{proof}
	By \eqref{Mainthmpart1eqn2} and \eqref{BPRSshiftdef1},
	\begin{align}\nonumber
	\omega_k^{[1]}(2t)&=\pa{24k-1}{4\sqrt{6}}^{2t}\sum_{\ell=0}^{t}\binom{2t+1}{\ell}\frac{(2t+1-\ell)}{(2t+1-2\ell)!}(-1)^{\ell}\pa{\pi}{6}^{2t-2\ell}\pa{1}{24k-1}^{\ell}\\\nonumber
	&\hspace{-0.5 cm}\os{(\ell\mapsto t-\ell)}{=}\frac{(24k-1)^{t+\frac 12}}{(4\sqrt{6})^{2t}}\frac{6}{\pi}\binom{2t+1}{t}(t+1)(-1)^t\sum_{\ell=0}^{t}\frac{\binom{2t+1}{t-\ell}(t+1+\ell)}{\binom{2t+1}{t}(t+1)}\frac{(-1)^{\ell}}{(2\ell+1)!}\alpha_k^{2\ell+1}
	\\\nonumber
	&=\frac{(24k-1)^{t+\frac 12}}{(4\sqrt{6})^{2t}}\frac{6}{\pi}\binom{2t}{t}(2t+1)(-1)^t\sum_{\ell=0}^{t}\frac{\binom{2t+1}{t-\ell}(t+1+\ell)}{\binom{2t+1}{t}(t+1)}\frac{(-1)^{\ell}}{(2\ell+1)!}\alpha_k^{2\ell+1}\\\label{BPRSshiftlem4eqn1}
	&=:\frac{(24k-1)^{t+\frac 12}}{(4\sqrt{6})^{2t}}\frac{6}{\pi}\binom{2t}{t}(2t+1)(-1)^tS_k^{[2]}(t).
	\end{align}
	Now, we bound $|S_k^{[2]}(t)|$ as
	\begin{align}\nonumber
	\left|S_k^{[2]}(t)\right|&\le \left|\sin\left(\alpha_k\right)\right|+\left|S_k^{[2]}(t)-\sin\left(\alpha_k\right)\right|\\\nonumber
	&=\left|\sin\left(\alpha_k\right)\right|+\left|\sum_{\ell=0}^{t}\frac{\binom{2t+1}{t-\ell}(t+1+\ell)}{\binom{2t+1}{t}(t+1)}\frac{(-1)^{\ell}}{(2\ell+1)!}\alpha_k^{2\ell+1}-\sum_{\ell=0}^{\infty}\frac{(-1)^{\ell}}{(2\ell+1)!}\alpha_k^{2\ell+1}\right|\\\nonumber
	&\le \left|\sin\left(\alpha_k\right)\right|+\sum_{\ell=1}^{t}\left|\frac{\binom{2t+1}{t-\ell}(t+1+\ell)}{\binom{2t+1}{t}(t+1)}-1\right|\frac{\alpha_k^{2\ell+1}}{(2\ell+1)!}+\sum_{\ell=t+1}^{\infty}\frac{\alpha_k^{2\ell+1}}{(2\ell+1)!}\\\label{BPRSshiftlem4eqn2}
	&=:\left|\cos\left(\alpha_k\right)\right|+S_{k,1}^{[2]}(t)+S_{k,2}^{[2]}(t).
	\end{align}
Similar to \eqref{BPRSshiftlem3eqn3}, following the proof of \cite[Lemma 6.2.7]{B}, it follows that for $t\ge 1$,
	\begin{equation}\label{BPRSshiftlem4eqn3}
	S_{k,2}^{[2]}(t)\le \frac{C_2(k)}{t^2}.
	\end{equation}
	To obtain an upper bound of $S_{k,1}^{[2]}(t)$, observe that
	\[
	\frac{\binom{2t+1}{t-\ell}(t+1+\ell)}{\binom{2t+1}{t}(t+1)}=\prod_{j=1}^{\ell}\frac{t+1-j}{t+j},
	\]
which implies
	\begin{equation}\label{BPRSshiftlem4eqn4}
	\frac{\binom{2t+1}{t-\ell}(t+1+\ell)}{\binom{2t+1}{t}(t+1)}-1\le 0\ \ \text{as}\ \ \frac{t+1-j}{t+j}\le 1\ \ \text{for all}\ j\ge 1.
	\end{equation}
Moreover, by using Lemma \ref{BPRSprelimlem1},
	\begin{align*}
	\prod_{j=1}^{\ell}\frac{t+1-j}{t+j}=\prod_{j=1}^{\ell}\frac{1-\frac{j-1}{t}}{1+\frac{j}{t}}&\ge 1-\sum_{j=1}^{\ell}\frac{j-1}{t}-\sum_{j=1}^{\ell}\frac{j}{t}
	=1-\frac{\ell^2}{t},
	\end{align*}
	we have
	\begin{equation}\label{BPRSshiftlem4eqn5}
	\frac{\binom{2t+1}{t-\ell}(t+1+\ell)}{\binom{2t+1}{t}(t+1)}-1\ge -\frac{\ell^2}{t}.
	\end{equation}
	Combining \eqref{BPRSshiftlem4eqn4} and \eqref{BPRSshiftlem4eqn5}, it follows that
	\begin{equation}\label{BPRSshiftlem4eqn6}
	\left|\frac{\binom{2t+1}{t-\ell}(t+1+\ell)}{\binom{2t+1}{t}(t+1)}-1\right|\le \frac{\ell^2}{t}.
	\end{equation}
	Now, using \eqref{BPRSshiftlem4eqn6} and $\ell\ge 1$, we bound $S_{k,1}^{[2]}(t)$ against
	\begin{equation}\label{BPRSshiftlem4eqn7}
	S_{k,1}^{[2]}(t)\le \sum_{\ell=1}^{t}\frac{\ell^2}{t}\frac{\alpha_k^{2\ell+1}}{(2\ell+1)!}\le \frac{\alpha_k^2}{4t}\sum_{\ell=1}^{\infty}\frac{\alpha_k^{2\ell-1}}{(2\ell-1)!}=\frac{\alpha_k^2\cdot \sinh\left(\alpha_k\right)}{4t}.
	\end{equation}
	Applying \eqref{BPRSshiftlem4eqn3} and \eqref{BPRSshiftlem4eqn7} to \eqref{BPRSshiftlem4eqn2} gives
	\begin{align}\nonumber
	\left|S_k^{[2]}(t)\right|&\le \left|\sin\left(\alpha_k\right)\right|+\frac{\alpha_k^2 \cdot\sinh\left(\alpha_k\right)}{4t}+\frac{C_2(k)}{t^2}\le \left|\sin\left(\alpha_k\right)\right|+\frac{\alpha_k^2 \cdot\sinh\left(\alpha_k\right)+4C_2(k)}{4t}\ \ \left(\text{as}\ t\ge 1\right)\\\label{BPRSshiftlem4eqn8}
	&=\left|\sin\left(\alpha_k\right)\right|\left(1+\frac{\alpha_k^2 \cdot\sinh\left(\alpha_k\right)+4C_2(k)}{\left|\sin\left(\alpha_k\right)\right|\cdot 4t}\right).
	\end{align}
Summarizing,
	\begin{align*}
	\left|\omega_k^{[1]}(2t)\right|&\le \frac{(24k-1)^{t+\frac 12}}{(4\sqrt{6})^{2t}}\frac{6}{\pi}\binom{2t}{t}(2t+1)\left|S_k^{[2]}(t)\right|\ \ \left(\text{by}\ \eqref{BPRSshiftlem4eqn1}\right)\\
	&\le \frac{(24k-1)^{t+\frac 12}}{(4\sqrt{6})^{2t}}\frac{6}{\pi}\binom{2t}{t}(2t+1)\left|\sin\left(\alpha_k\right)\right|\left(1+\frac{\alpha_k^2 \cdot\sinh\left(\alpha_k\right)+4C_2(k)}{\left|\sin\left(\alpha_k\right)\right|\cdot 4t}\right)\ \ \left(\text{by}\ \eqref{BPRSshiftlem4eqn8}\right)\\
	&\le \frac{(24k-1)^{t+\frac 12}}{(4\sqrt{6})^{2t}}\frac{6}{\pi}\frac{4^{t}}{\sqrt{\pi t}}(2t+1)\left|\sin\left(\alpha_k\right)\right|\left(1+\frac{\alpha_k^2 \cdot\sinh\left(\alpha_k\right)+4C_2(k)}{\left|\sin\left(\alpha_k\right)\right|\cdot 4t}\right)\ \ \left(\text{by Lemma \ref{BPRSprelimlem2}}\right)\\
	&=\pa{24k-1}{24}^t\frac{2}{\sqrt{\pi}\alpha_k}\sqrt{t}\left|\sin\left(\alpha_k\right)\right|\left(1+\frac{\alpha_k^2 \cdot\sinh\left(\alpha_k\right)+4C_2(k)}{\left|\sin\left(\alpha_k\right)\right|\cdot 4t}\right)\left(1+\frac{1}{2t}\right)\\
	&\le\pa{24k-1}{24}^t\frac{2\sqrt{t}}{\sqrt{\pi}\alpha_k}\left|\sin\left(\alpha_k\right)\right|\left(1+\frac{C_2^*(k)}{t}\right)\ \ \left(\text{as}\ t\ge 1\right),
	\end{align*}
	which concludes the proof of Lemma \ref{BPRSshiftlem4}.
\end{proof}
\begin{remark}\label{BPRSshiftremark1}
In Lemmas \ref{BPRSshiftlem3} and \ref{BPRSshiftlem4} we have seen that the factors $|\cos(\alpha_k)|$ and $|\sin(\alpha_k)|$ appear in the estimates of upper bounds. Following the definition of $\a_k$ in \eqref{BPRSshiftdef1}, it is a routine check to verify that $\sin(\alpha_k)\ne0$ and $\cos(\alpha_k)\ne 0$ for all $k\in \mathbb{N}$ which, in turn, justifies that both $C_1^*(k)$ and $C_2^*(k)$ (cf. \eqref{BPRSshiftlem3eqn0} and \eqref{BPRSshiftlem4eqn0}) are well-defined.
\end{remark}
Now, applying the bounds from Lemmas \ref{BPRSshiftlem3} and \ref{BPRSshiftlem4}, we now estimate an upper bound for the absolute value of $\sum_{t=N+1}^{\infty}\frac{\omega_k^{[1]}(t)}{n^{\frac t2}}$.
\begin{lemma}\label{BPRSshiftlem5}
Let $(k,N)\in \mathbb{N}^2$ and $\omega_k^{[1]}(t)$ be as in \eqref{Mainthmpart1eqn2}. Let $\widehat{g}(m)$ be as in \Cref{BPRSthm1}. Then for all $n\ge \underset{k,N\ge 1}{\max}\{24k-1,\widehat{g}(N+1)\}$,
\[
\left|\sum_{t=N+1}^{\infty}\frac{\omega_k^{[1]}(t)}{n^{\frac t2}}\right| \le E_{N,1}^{[1]}(k)\cdot n^{-\frac{N+1}{2}},
\]
where $E_{N,1}^{[1]}(k)$ is defined below in \eqref{BPRSshiftlem5eqn6}.
\end{lemma}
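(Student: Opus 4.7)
The plan is to split $\sum_{t=N+1}^{\infty}\omega_k^{[1]}(t)/n^{t/2}$ according to the parity of $t$, apply the two bounds from Lemmas \ref{BPRSshiftlem3} and \ref{BPRSshiftlem4} to each piece, and then extract the factor $n^{-(N+1)/2}$ from the geometric-type tails that remain. Setting $r:=(24k-1)/(24n)$, the hypothesis $n\ge 24k-1$ forces $r\le 1/24<1$, which is the essential quantitative input: once the factors $((24k-1)/24)^{t}$ appearing in Lemmas \ref{BPRSshiftlem3} and \ref{BPRSshiftlem4} are divided by $n^{t}$, they become $r^{t}$, producing genuinely geometric tails.

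Using $1+C_i^*(k)/t\le 1+C_i^*(k)$ for $t\ge 1$, Lemma \ref{BPRSshiftlem3} applied termwise yields
\[
\sum_{\substack{t\ge N+1\\ t\text{ odd}}}\frac{|\omega_k^{[1]}(t)|}{n^{t/2}}\ \le\ \frac{|\cos(\alpha_k)|\,(1+C_1^*(k))}{\sqrt{n}}\sqrt{\frac{6}{\pi^3}}\sum_{s\ge \lceil N/2\rceil}\sqrt{s+1}\,r^{s},
\]
and analogously Lemma \ref{BPRSshiftlem4} gives
\[
\sum_{\substack{t\ge N+1\\ t\text{ even}}}\frac{|\omega_k^{[1]}(t)|}{n^{t/2}}\ \le\ \frac{2\,|\sin(\alpha_k)|\,(1+C_2^*(k))}{\sqrt{\pi}\,\alpha_k}\sum_{s\ge \lceil (N+1)/2\rceil}\sqrt{s}\,r^{s}.
\]
The remaining step is to bound each geometric-type sum $\sum_{s\ge M}\sqrt{s}\,r^{s}$ by an explicit constant multiple of $\sqrt{M}\,r^{M}$. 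A clean route is the elementary estimate $\sqrt{M+j}\le \sqrt{M}(1+j/(2M))$, which reduces the sum to $\sqrt{M}\,r^{M}\bigl(1/(1-r)+r/(2M(1-r)^2)\bigr)$; since $r\le 1/24$, both factors admit absolute numerical bounds. Substituting $M\in\{\lceil N/2\rceil,\lceil(N+1)/2\rceil\}$ and using $r^{M}=((24k-1)/24)^{M}n^{-M}$ together with $M\ge (N+1)/2-1/2$ isolates the factor $n^{-(N+1)/2}$, any residual $n^{-1/2}$ coming from parity mismatch being absorbed into the constant via $n\ge 24k-1$. Adding the two parity contributions then produces an explicit constant depending only on $k$ and $N$, which is the desired $E_{N,1}^{[1]}(k)$.

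The main obstacle is bookkeeping rather than technique: the two parity branches start at slightly different indices whose discrepancy must be handled uniformly, the weight $\sqrt{t+1}$ must be absorbed without destroying the $n^{-(N+1)/2}$ scaling, and one has to keep the dependence on $k$ (through $\alpha_k$, $C_1^*(k)$, $C_2^*(k)$) visible in the final constant. Once the uniform geometric bound on $\sum_{s\ge M}\sqrt{s}\,r^{s}$ is in place, the rest is routine algebraic simplification.
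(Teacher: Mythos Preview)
Your approach is essentially the same as the paper's: split the tail by parity of $t$, apply Lemmas~\ref{BPRSshiftlem3} and~\ref{BPRSshiftlem4} to the odd and even pieces respectively, and then control the remaining geometric-type tail $\sum_{s\ge M}\sqrt{s}\,r^{s}$ with $r=(24k-1)/(24n)\le 1/24$. The only differences are tactical. First, you bound $1+C_i^*(k)/t$ by the $t$-free quantity $1+C_i^*(k)$, whereas the paper keeps the $N$-dependence via $1+2C_i^*(k)/(N+1)$ (using $t\ge N/2$ or $t\ge (N+1)/2$). Second, for the weighted geometric tail you linearize $\sqrt{M+j}\le \sqrt{M}(1+j/(2M))$ and sum the resulting arithmetic-geometric series, while the paper uses the cruder but simpler $\sqrt{1+j}\le 2^{j}$ to reduce to a pure geometric series with ratio $2r\le 1/12$, and then invokes $n>\widehat{g}(N+1)\ge N$ to trade residual $1/n$ factors for $1/N$. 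Both routes are valid and yield an explicit constant depending only on $(k,N)$; your constant simply differs from the paper's specific $E_{N,1}^{[1]}(k)$ in~\eqref{BPRSshiftlem5eqn6}, which is defined \emph{by} the estimates the paper chooses rather than prescribed in advance.
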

\begin{proof}
We split the series depending on $t$ odd or even:
\begin{equation*}
\sum_{t=N+1}^{\infty}\frac{\omega_k^{[1]}(t)}{n^{\frac t2}}=\underset{t\equiv\ 0\ \left(\text{mod}\ 2\right)}{\sum_{t=N+1}^{\infty}}\frac{\omega_k^{[1]}(t)}{n^{\frac t2}}+\underset{t\equiv\ 1\ \left(\text{mod}\ 2\right)}{\sum_{t=N+1}^{\infty}}\frac{\omega_k^{[1]}(t)}{n^{\frac t2}}=\sum_{t\ge \frac{N+1}{2}}\frac{\omega_k^{[1]}(2t)}{n^t}+\sum_{t\ge \frac{N}{2}}\frac{\omega_k^{[1]}(2t+1)}{n^{t+\frac 12}},
\end{equation*}
and so,
\begin{equation}\label{BPRSshiftlem5eqn1}
\left|\sum_{t=N+1}^{\infty}\frac{\omega_k^{[1]}(t)}{n^{\frac t2}}\right|\le \sum_{t\ge \frac{N+1}{2}}\frac{\left|\omega_k^{[1]}(2t)\right|}{n^t}+n^{-\frac 12}\sum_{t\ge \frac{N}{2}}\frac{\left|\omega_k^{[1]}(2t+1)\right|}{n^t}.
\end{equation}
Recalling \eqref{BPRSshiftlem4eqn0}, we set
\begin{equation}\label{BPRSshiftlem5def1}
C_e(k):=2C^*_2(k)+\frac{(24k-1)(1+C^*_2(k))}{6}.
\end{equation}
First, we estimate
\begin{align}\nonumber
&\sum_{t\ge \frac{N+1}{2}}\frac{\left|\omega_k^{[1]}(2t)\right|}{n^t}\\\nonumber
&\le \frac{2\left|\sin\left(\alpha_k\right)\right|}{\sqrt{\pi}\alpha_k}\left(1+\frac{2C_2^*(k)}{N+1}\right)\sum_{t\ge \frac{N+1}{2}}\pa{24k-1}{24n}^t\sqrt{t}
\ \left(\text{by\ \Cref{BPRSshiftlem4}}\right)\\\nonumber
&=\frac{\sqrt{2}\left|\sin\left(\alpha_k\right)\right|}{\sqrt{\pi}\alpha_k}\left(1+\frac{2C_2^*(k)}{N+1}\right)\pa{24k-1}{24}^{\frac{N+1}{2}}\sqrt{N+1}n^{-\frac{N+1}{2}}\sum_{t\ge 0}\pa{24k-1}{24n}^t\sqrt{1+\frac{2t}{N+1}}\\\nonumber
&\le\frac{\sqrt{2}\left|\sin\left(\alpha_k\right)\right|}{\sqrt{\pi}\alpha_k}\left(1+\frac{2C_2^*(k)}{N+1}\right)\pa{24k-1}{24}^{\frac{N+1}{2}}\sqrt{N+1}n^{-\frac{N+1}{2}}\sum_{t\ge 0}\pa{24k-1}{24n}^t\sqrt{1+t}\\\nonumber
&\hspace{10 cm} \left(\text{since for}\ N\ge 1, \frac{2t}{N+1}\le t\right)\\\nonumber
&\le \frac{\sqrt{2}\left|\sin\left(\alpha_k\right)\right|}{\sqrt{\pi}\alpha_k}\left(1+\frac{2C_2^*(k)}{N+1}\right)\pa{24k-1}{24}^{\frac{N+1}{2}}\sqrt{N+1}n^{-\frac{N+1}{2}}\sum_{t\ge 0}\pa{24k-1}{12n}^t\\\nonumber
&\hspace{10.5 cm}\left(\text{as}\ \sqrt{1+t}\le 2^t\ \text{for}\ t\ge 0\right)\\\nonumber
&\le \frac{\sqrt{2}\left|\sin\left(\alpha_k\right)\right|}{\sqrt{\pi}\alpha_k}\left(1+\frac{2C_2^*(k)}{N+1}\right)\pa{24k-1}{24}^{\frac{N+1}{2}}\sqrt{N+1}\left(1+\frac{24k-1}{6n}\right)n^{-\frac{N+1}{2}}\\\nonumber
&\hspace{2 cm} \left(\text{since for}\ 0<x\le \frac 12,\  (1-x)^{-1}\le 1+2x\ \text{and choose}\ x\mapsto \frac{24k-1}{12n}\ \text{as}\ n\ge 24k-1\right)\\\nonumber
&\le \frac{\sqrt{2}\left|\sin\left(\alpha_k\right)\right|}{\sqrt{\pi}\alpha_k}\left(1+\frac{2C_2^*(k)}{N+1}\right)\pa{24k-1}{24}^{\frac{N+1}{2}}\sqrt{N+1}\left(1+\frac{24k-1}{6N}\right)n^{-\frac{N+1}{2}}\\\nonumber
&\hspace{3.5 cm}\left(\text{as for all}\ N\in \mathbb{N}, n>\widehat{g}(N+1)\ge N\ \text{by definition of}\ \widehat{g}(m)\ \text{in \Cref{BPRSthm1}} \right)\\\label{BPRSshiftlem5eqn2}
&\le \frac{\sqrt{2}\left|\sin\left(\alpha_k\right)\right|}{\sqrt{\pi}\alpha_k}\pa{24k-1}{24}^{\frac{N+1}{2}}\sqrt{N+1}\left(1+\frac{C_e(k)}{N}\right)n^{-\frac{N+1}{2}}=E_{N,1,e}^{[1]}(k)\cdot n^{-\frac{N+1}{2}}
\end{align}
with 
\begin{equation}\label{BPRSshiftlem5eqn3}
E_{N,1,e}^{[1]}(k):=\frac{\sqrt{2}\left|\sin\left(\alpha_k\right)\right|}{\sqrt{\pi}\alpha_k}\pa{24k-1}{24}^{\frac{N+1}{2}}\sqrt{N+1}\left(1+\frac{C_e(k)}{N}\right).
\end{equation}
Recalling \eqref{BPRSshiftlem3eqn0}, we set
\begin{equation}\label{BPRSshiftlem5def2}
C_o(k):=2C^*_1(k)+\frac{(24k-1)(1+2C^*_1(k))}{6}.
\end{equation}
Next, we have
\begin{align}\nonumber
&n^{-\frac 12}\sum_{t\ge \frac{N+1}{2}}\frac{\left|\omega_k^{[1]}(2t+1)\right|}{n^t}\\\nonumber
&\hspace{-0.5 cm}\le \pa{6}{\pi^3}^{\frac 12}\left|\cos\left(\alpha_k\right)\right|\left(1+\frac{2C_1^*(k)}{N}\right)n^{-\frac 12}\sum_{t\ge \frac{N}{2}}\pa{24k-1}{24n}^t\sqrt{t+1}\ \ \left(\text{by\ \Cref{BPRSshiftlem3}}\right)\\\nonumber
&\hspace{-0.5 cm}=\pa{3}{\pi^3}^{\frac 12}\left|\cos\left(\alpha_k\right)\right|\left(1+\frac{2C_1^*(k)}{N}\right)\pa{24k-1}{24}^{\frac{N}{2}}\sqrt{N+2}n^{-\frac{N+1}{2}}\sum_{t\ge 0}\pa{24k-1}{24n}^t\sqrt{1+\frac{2t}{N+2}}\\\nonumber
&\hspace{-0.5 cm}\le\pa{3}{\pi^3}^{\frac 12}\left|\cos\left(\alpha_k\right)\right|\left(1+\frac{2C_1^*(k)}{N}\right)\pa{24k-1}{24}^{\frac{N}{2}}\sqrt{N+2}n^{-\frac{N+1}{2}}\sum_{t\ge 0}\pa{24k-1}{24n}^t\sqrt{1+t}\\\nonumber
&\hspace{10 cm} \left(\text{since for}\ N\ge 1, \frac{2t}{N+2}\le t\right)\\\nonumber
&\hspace{-0.5 cm}\le\pa{3}{\pi^3}^{\frac 12}\left|\cos\left(\alpha_k\right)\right|\left(1+\frac{2C_1^*(k)}{N}\right)\pa{24k-1}{24}^{\frac{N}{2}}\sqrt{N+2}n^{-\frac{N+1}{2}}\sum_{t\ge 0}\pa{24k-1}{12n}^t\\\nonumber
&\hspace{11 cm}\left(\text{as}\ \sqrt{1+t}\le 2^t\ \text{for}\ t\ge 0\right)\\\nonumber
&\hspace{-0.5 cm}\le \pa{3}{\pi^3}^{\frac 12}\left|\cos\left(\alpha_k\right)\right|\left(1+\frac{2C_1^*(k)}{N}\right)\pa{24k-1}{24}^{\frac{N}{2}}\sqrt{N+2}\left(1+\frac{24k-1}{6n}\right)n^{-\frac{N+1}{2}}\\\nonumber
&\hspace{-0.5 cm}\le \pa{3}{\pi^3}^{\frac 12}\left|\cos\left(\alpha_k\right)\right|\left(1+\frac{2C_1^*(k)}{N}\right)\pa{24k-1}{24}^{\frac{N}{2}}\sqrt{N+2}\left(1+\frac{24k-1}{6N}\right)n^{-\frac{N+1}{2}}\\\nonumber
&\hspace{3.5 cm}\left(\text{as for all}\ N\in \mathbb{N}, n>\widehat{g}(N+1)\ge N\ \text{by definition of}\ \widehat{g}(m)\ \text{in \Cref{BPRSthm1}} \right)\\\label{BPRSshiftlem5eqn4}
& \hspace{-0.5 cm}\le \pa{3}{\pi^3}^{\frac 12}\left|\cos\left(\alpha_k\right)\right|\pa{24k-1}{24}^{\frac{N}{2}}\sqrt{N+2}\left(1+\frac{C_o(k)}{N}\right)n^{-\frac{N+1}{2}}=E_{N,1,o}^{[1]}(k)\cdot n^{-\frac{N+1}{2}}
\end{align}
with 
\begin{equation}\label{BPRSshiftlem5eqn5}
E_{N,1,o}^{[1]}(k):=\pa{3}{\pi^3}^{\frac 12}\left|\cos\left(\alpha_k\right)\right|\pa{24k-1}{24}^{\frac{N}{2}}\sqrt{N+2}\left(1+\frac{C_o(k)}{N}\right).
\end{equation}
Recalling \eqref{BPRSshiftlem5eqn3} and \eqref{BPRSshiftlem5eqn5}, set 
\begin{equation}\label{BPRSshiftlem5eqn6}
E_{N,1}^{[1]}(k):=E_{N,1,e}^{[1]}(k)+E_{N,1,o}^{[1]}(k),
\end{equation}
and then applying \eqref{BPRSshiftlem5eqn2} and \eqref{BPRSshiftlem5eqn4} to  \eqref{BPRSshiftlem5eqn1}, we finally get for $n\ge  \underset{k,N\ge 1}{\max}\{24k-1,\widehat{g}(N+1)\}$,
\[
\left|\sum_{t=N+1}^{\infty}\frac{\omega_k^{[1]}(t)}{n^{\frac t2}}\right|\le E_{N,1}^{[1]}(k)\cdot n^{-\frac{N+1}{2}},
\]
which finishes the proof of \Cref{BPRSshiftlem5}.
\end{proof}
Therefore, applying Lemma \ref{BPRSshiftlem5} to Lemma \ref{BPRSshiftlem2}, we get for $(k,N)\in \mathbb{N}^2$ and $n\ge \underset{k,N\ge 1}{\max}\{24k-1,\widehat{g}(N+1)\}$,
\begin{equation}\label{BPRSshifteqn1}
\frac{\sqrt{12}e^{\mu(n+k)}}{24(n+k)-1}\left(1-\frac{1}{\mu(n+k)}\right)=\frac{e^{\pi\sqrt{2n/3}}}{4n\sqrt{3}}\left(\sum_{t=0}^{N}\frac{\omega_k^{[1]}(t)}{n^{\frac t2}}+O_{\le  E_{N,1}^{[1]}(k)}\left(n^{-\frac{N+1}{2}}\right)\right).
\end{equation}

Now we are ready to prove Theorem \ref{Mainthmpart1}.

\medskip

\noindent\emph{Proof of Theorem \ref{Mainthmpart1}:}
Recalling the definition of $\widehat{g}(m)$ in \Cref{BPRSthm1}, for $(k,N)\in \mathbb{N}^2$ we define
\begin{equation}\label{Mainthmpart1eqn3}
n_N^{[1]}(k):=\underset{(N,k)\in\mathbb{N}^2}{\max}\left\{\widehat{g}(N+1),(24k-1)^2\right\}.
\end{equation}
Applying \eqref{BPRSshifteqn1} to Lemma \ref{BPRSshiftlem1}, we have for $(k,N)\in \mathbb{N}^2$ and for all $n\ge n_N^{[1]}(k)$,
\begin{equation}\label{Mainthmpart1eqn4}
p(n+k)=\frac{e^{\pi\sqrt{2n/3}}}{4n\sqrt{3}}\left(\sum_{t=0}^{N}\frac{\omega_k^{[1]}(t)}{n^{\frac t2}}+O_{\le  E_{N,1}^{[1]}(k)}\left(n^{-\frac{N+1}{2}}\right)\right)+\frac{\sqrt{12}e^{\mu(n+k)}}{24(n+k)-1}\cdot O_{\le 1}\left(\mu(n+k)^{-N-1}\right).
\end{equation}
Finally, it remains to estimate the second factor on the right hand side of \eqref{Mainthmpart1eqn4}. 
\begin{align}\nonumber
&\frac{\sqrt{12}e^{\mu(n+k)}}{24(n+k)-1}\mu(n+k)^{-N-1}\\\nonumber
&=\frac{e^{\pi\sqrt{2n/3}}}{4n\sqrt{3}}\exp\left(\mu(n+k)-\pi\sqrt{2n/3}\right)\left(1+\frac{24k-1}{24n}\right)^{-1}\mu(n+k)^{-N-1}\\\nonumber
&\le \frac{e^{\pi\sqrt{2n/3}}}{4n\sqrt{3}}\exp\left(\mu(n+k)-\pi\sqrt{2n/3}\right)\mu(n+k)^{-N-1}\ \ \left(\text{as}\ 1+\frac{24k-1}{24n}\ge 1\ \text{for}\ (k,n)\in\mathbb{N}^2\right)\\\nonumber
&\le \frac{e^{\pi\sqrt{2n/3}}}{4n\sqrt{3}}\exp\left(\mu(n+k)-\pi\sqrt{2n/3}\right)\pa{6}{\pi\sqrt{24}}^{N+1}n^{-\frac{N+1}{2}}\\\nonumber
&\hspace{5 cm}\left(\text{as}\ \mu(n+k)=\frac{\pi}{6}\sqrt{24n+24k-1}\ge \frac{\pi}{6}\sqrt{24n}\ \text{for}\ (k,n)\in\mathbb{N}^2\right)\\\nonumber
&=\frac{e^{\pi\sqrt{2n/3}}}{4n\sqrt{3}}\exp\left(\pi\sqrt{\frac{2n}{3}}\left(\sqrt{1+\frac{24k-1}{24n}}-1\right)\right)\pa{6}{\pi\sqrt{24}}^{N+1}n^{-\frac{N+1}{2}}\\\nonumber
&\le \frac{e^{\pi\sqrt{2n/3}}}{4n\sqrt{3}}\exp\left(\pi\sqrt{\frac{2n}{3}}\cdot \frac{24k-1}{48n}\right)\pa{6}{\pi\sqrt{24}}^{N+1}n^{-\frac{N+1}{2}}\\\nonumber
&\hspace{1.5 cm}\ \ \left(\text{applying}\ (1+x)^r\le 1+rx\ \text{for}\ 0\le r\le 1\ \text{and}\ x\ge -1\ \ \text{with}\ (x,r)= \left(\frac{24k-1}{24n},\frac 12\right)\right)\\\nonumber
&\le \frac{e^{\pi\sqrt{2n/3}}}{4n\sqrt{3}}\left(1+\frac{\pi}{24}\sqrt{\frac{2}{3n}}(24k-1)\right)\pa{6}{\pi\sqrt{24}}^{N+1}n^{-\frac{N+1}{2}}\\\nonumber
&\hspace{0.5 cm} \left(\text{applying}\ e^x<1+2x\ \text{for}\ 0<x<1\ \text{with}\ x= \ \frac{\pi\sqrt{2}(24k-1)}{48\sqrt{3n}}\ \text{and}\ 0<x<1\ \text{as}\  n\ge (24k-1)^2\right)\\\label{Mainthmpart1eqn5}
&\le \frac{e^{\pi\sqrt{2n/3}}}{4n\sqrt{3}}\left(1+\frac{3.7\cdot k}{N}\right)\pa{6}{\pi\sqrt{24}}^{N+1}n^{-\frac{N+1}{2}}\ \left(\text{as}\ n\ge \widehat{g}(N+1)\ge \frac 12 N^2\right).
\end{align}
Recalling \eqref{BPRSshiftlem5eqn6}, we set
\begin{equation}\label{Mainthmpart1eqn6}
E_N^{[1]}(k):=E_{N,1}^{[1]}(k)+\left(1+\frac{3.7\cdot k}{N}\right)\pa{6}{\pi\sqrt{24}}^{N+1}.
\end{equation}
Finally, combining \eqref{Mainthmpart1eqn4} and \eqref{Mainthmpart1eqn5} concludes the proof of \Cref{Mainthmpart1}.
\qed

\section{Asymptotics of $\frac{1}{p(n)}$}\label{Sec3}
This section is devoted to provide an asymptotic expansion for the inverse of the partition function $1/p(n)$ along with computations for error bounds to obtain a result similar to~Theorem~\ref{Mainthmpart1}. We shall state the main result later (cf.\ \Cref{Mainthmpart2}) as it involves certain technical parameters which we will define later. 

\begin{lemma}\label{BPRSinvlem1}
Let $\widehat{g}(m)$ be as in Theorem \ref{BPRSthm1} and $N\in \mathbb{N}$. Then for all $n\ge \widehat{g}(N+1)$, 
\[
\frac{1}{p(n)}=\frac{24n-1}{\sqrt{12}}e^{-\mu(n)}\left(1-\frac{1}{\mu(n)}\right)^{-1}+4n\sqrt{3}\ e^{-\pi\sqrt{2n/3}}\cdot O_{\le E^{[2]}_{N,2}}\left(n^{-\frac{N+1}{2}}\right),
\]
where
\begin{equation}\label{BPRSinvlem1finalerror}
E^{[2]}_{N,2}:=\pa{6}{\pi\sqrt{24}}^{N+1}\left(1+\frac{4}{N}\right).
\end{equation} 
\end{lemma}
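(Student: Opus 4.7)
\medskip

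\noindent\textbf{Proof proposal.}
The plan is to apply \Cref{BPRSthm1} with $m = N+1$, invert the resulting expansion of $p(n)$ via the algebraic identity $\frac{1}{T+U} = \frac{1}{T} - \frac{U}{T(T+U)}$, and show that the resulting correction term admits the claimed error bound. Concretely, for $n \ge \widehat{g}(N+1)$, \Cref{BPRSthm1} gives
\[
p(n) = \frac{\sqrt{12}\, e^{\mu(n)}}{24n-1}(T+U), \qquad T := 1 - \frac{1}{\mu(n)}, \quad |U| \le \mu(n)^{-(N+1)},
\]
and the hypothesis $n \ge \widehat{g}(N+1)$ forces $\mu(n) \ge \nu(N+1)$ large enough that $T > 0$ and $T+U > 0$. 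Setting $R(n) := -\frac{24n-1}{\sqrt{12}}\, e^{-\mu(n)} \cdot \frac{U}{T(T+U)}$, the identity then yields
\[
\frac{1}{p(n)} = \frac{24n-1}{\sqrt{12}}\, e^{-\mu(n)}\, T^{-1} + R(n),
\]
whose first summand is already the main term of the claim, so everything reduces to bounding $|R(n)|$ by $4n\sqrt{3}\, e^{-\pi\sqrt{2n/3}} \cdot E^{[2]}_{N,2} \cdot n^{-(N+1)/2}$.

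The bound on $|R(n)|$ is obtained by three independent conversions of its three factors. First, $\frac{24n-1}{\sqrt{12}} \le 4n\sqrt{3}$ holds with equality up to a factor $1 - \frac{1}{24n}$, since $4n\sqrt{3}\cdot\sqrt{12} = 24n$. Second, the exponential comparison $e^{-\mu(n)} = e^{-\pi\sqrt{2n/3}} \exp(\pi\sqrt{2n/3} - \mu(n))$ with $\pi\sqrt{2n/3} - \mu(n) = \frac{\pi}{6}(\sqrt{24n}-\sqrt{24n-1})$ of order $1/\sqrt{n}$ produces a correction tending to $1$. Third, $|U| \le \mu(n)^{-(N+1)}$ combined with $|T+U| \ge T - |U|$ gives
\[
\frac{|U|}{T\,|T+U|} \le \frac{\mu(n)^{-(N+1)}}{\bigl(1 - 1/\mu(n)\bigr)\bigl(1 - 1/\mu(n) - \mu(n)^{-(N+1)}\bigr)},
\]
and the algebraic identity $\mu(n)^{-(N+1)} = \bigl(6/(\pi\sqrt{24})\bigr)^{N+1} n^{-(N+1)/2} (1 - 1/(24n))^{-(N+1)/2}$ both converts the intrinsic decay rate $\mu(n)^{-(N+1)}$ into the target rate $n^{-(N+1)/2}$ and produces the leading constant $\bigl(6/(\pi\sqrt{24})\bigr)^{N+1}$ appearing in $E^{[2]}_{N,2}$.

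The main technical obstacle is showing that the product of all the resulting correction factors is uniformly bounded by $1 + 4/N$ for $n \ge \widehat{g}(N+1)$. The lower bound $\mu(n) \ge \nu(N+1)$ (which grows at least linearly in $N$) forces $1/\mu(n)$ and $1/(24n)$ to be small, after which the standard linearizations $(1-y)^{-\alpha} \le 1 + 2\alpha y$ for small $y$ and $e^y \le 1+2y$ for $0 < y < 1$, already employed in the proof of \Cref{Mainthmpart1}, control each factor. The explicit constant $4$ in $1 + 4/N$ should then emerge from a careful but essentially routine accounting: each of $(1 - 1/(24n))^{-(N+1)/2}$, $1/((1-1/\mu(n))(1-1/\mu(n)-\mu(n)^{-(N+1)}))$, and $\exp(\pi\sqrt{2n/3}-\mu(n))$ contributes a $1 + O(1/N)$ correction, and the total of these contributions fits comfortably under $4/N$.
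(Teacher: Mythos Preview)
Your proposal is correct and follows essentially the same route as the paper: both invert the expansion from \Cref{BPRSthm1} with $m=N+1$, isolate the main term $\frac{24n-1}{\sqrt{12}}e^{-\mu(n)}\bigl(1-\tfrac{1}{\mu(n)}\bigr)^{-1}$, and then convert $\mu(n)^{-(N+1)}$ to $\bigl(\tfrac{6}{\pi\sqrt{24}}\bigr)^{N+1}n^{-(N+1)/2}$ while absorbing the auxiliary factors $(1-\tfrac{1}{24n})$, $\exp(\pi\sqrt{2n/3}-\mu(n))$, and $\bigl(1-\tfrac{1}{\mu(n)}\bigr)^{-1}$ into a single $(1+O(1/N))$ correction using $n>\widehat{g}(N+1)\ge N^2/2$. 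The only cosmetic difference is that the paper separates the error multiplicatively via two-sided bounds on $\bigl(1\pm\mu(n)^{-N-1}(1-1/\mu(n))^{-1}\bigr)^{-1}$, whereas you separate it additively via $\tfrac{1}{T+U}=\tfrac{1}{T}-\tfrac{U}{T(T+U)}$; the subsequent estimates are the same.
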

\begin{proof}
Applying Theorem \ref{BPRSthm1} with $m= N+1$ and $N\in \mathbb{N}$, for all $n>\widehat{g}(N+1)$ it follows that
\begin{align}\nonumber
&\frac{24n-1}{\sqrt{12}}e^{-\mu(n)}\left(1-\frac{1}{\mu(n)}\right)^{-1}\left(1+\mu(n)^{-N-1}\left(1-\frac{1}{\mu(n)}\right)^{-1}\right)^{-1}<\frac{1}{p(n)}\\\label{BPRSinvlem1eqn1}
&<\frac{24n-1}{\sqrt{12}}e^{-\mu(n)}\left(1-\frac{1}{\mu(n)}\right)^{-1}\left(1-\mu(n)^{-N-1}\left(1-\frac{1}{\mu(n)}\right)^{-1}\right)^{-1}.
\end{align}
Next, for all $N\in \mathbb{N}$ this gives
\begin{align}\nonumber
\left(1+\mu(n)^{-N-1}\left(1-\frac{1}{\mu(n)}\right)^{-1}\right)^{-1}& \ge 1-\mu(n)^{-N-1}\left(1-\frac{1}{\mu(n)}\right)^{-1}\\\nonumber
&\hspace{-3.25 cm} \left(\text{applying}\ (1+x)^{-1}>1-x\ \text{for}\ x>0\ \text{with}\ x= \mu(n)^{-N-1}\left(1-\frac{1}{\mu(n)}\right)^{-1}\right)\\\nonumber
&\ge 1-\pa{6}{\pi\sqrt{24}}^{\frac{N+1}{2}}n^{-\frac{N+1}{2}}\left(1-\frac{1}{24n}\right)^{-\frac{N+1}{2}}\left(1+\frac{1}{\sqrt{2n}}\right)\\\nonumber
&\hspace{2 cm}\left(\text{as}\ \left(1-\frac{1}{\mu(n)}\right)^{-1}\le 1+\frac{1}{\sqrt{2n}}\ \text{for}\ n\in\mathbb{N}\right)\\\nonumber
&\ge  1-\pa{6}{\pi\sqrt{24}}^{\frac{N+1}{2}}n^{-\frac{N+1}{2}}\left(1-\frac{N+1}{48n}\right)^{-1}\left(1+\frac{1}{\sqrt{2n}}\right)\\\nonumber
&\hspace{-5 cm} \left(\text{applying}\ (1+x)^{r}\ge 1+rx\ \text{for}\ (r,x)\in \mathbb{R}_{\ge 1}\times\mathbb{R}_{>-1}\ \text{with}\ \left(r,x\right)= \left(\frac{N+1}{2},-\frac{1}{24n}\right)\right)\\\nonumber
&\ge 1-\pa{6}{\pi\sqrt{24}}^{\frac{N+1}{2}}n^{-\frac{N+1}{2}}\left(1-\frac{1}{12N}\right)^{-1}\left(1+\frac{1}{N}\right)\\\nonumber
&\hspace{-1 cm}\left(\text{as for}\ n>\widehat{g}(N+1)\ge \frac{N^2}{2}, \sqrt{2n}\ge N\ \text{and}\ \frac{N+1}{48n}\le \frac{1}{12N}\right)\\\label{BPRSinvlem1eqn2}
&>1-\pa{6}{\pi\sqrt{24}}^{N+1}\left(1+\frac{2}{N}\right)n^{-\frac{N+1}{2}}.
\end{align}
Similarly, we obtain for $n\in \mathbb{Z}_{\ge 4}$ and $N\in \mathbb{N}$, 
\begin{equation}\label{BPRSinvlem1eqn3}
\left(1-\mu(n)^{-N-1}\left(1-\frac{1}{\mu(n)}\right)^{-1}\right)^{-1}<1+\pa{6}{\pi\sqrt{24}}^{N+1}\left(1+\frac{3}{N}\right)n^{-\frac{N+3}{2}}.
\end{equation}
Define
\begin{equation}\label{BPRSinvlem1errordef1}
E^{[2]}_{N,1}:=\pa{6}{\pi\sqrt{24}}^{N+1}\left(1+\frac{3}{N}\right).
\end{equation}
Applying \eqref{BPRSinvlem1eqn2} and \eqref{BPRSinvlem1eqn3} to \eqref{BPRSinvlem1eqn1} gives
\begin{align*}
\frac{1}{p(n)}&=\frac{24n-1}{\sqrt{12}}e^{-\mu(n)}\left(1-\frac{1}{\mu(n)}\right)^{-1}\left(1+O_{\le E^{[2]}_{N,1}}\left(n^{-\frac{N+1}{2}}\right)\right)\\
&=\frac{24n-1}{\sqrt{12}}e^{-\mu(n)}\left(1-\frac{1}{\mu(n)}\right)^{-1}+\frac{24n-1}{\sqrt{12}}e^{-\mu(n)}\left(1-\frac{1}{\mu(n)}\right)^{-1}\cdot O_{\le E^{[2]}_{N,1}}\left(n^{-\frac{N+1}{2}}\right)\\
&=\frac{24n-1}{\sqrt{12}}e^{-\mu(n)}\left(1-\frac{1}{\mu(n)}\right)^{-1}\\
&\hspace{1 cm}+4n\sqrt{3}e^{-\pi\sqrt{2n/3}}\left(1-\frac{1}{24n}\right)\exp\left(-\pi\sqrt{2n/3}\left(\sqrt{1-\frac{1}{24n}}-1\right)\right)\cdot O_{\le E^{[2]}_{N,1}}\left(n^{-\frac{N+1}{2}}\right).
\end{align*}
Consequently, using the fact that for all $N\in \mathbb{N}$, $n\ge \widehat{g}(N+1)>\frac 12N^2$, we can bound the term involving the factor $n^{-\frac{N+1}{2}}$ as
\begin{align*}
&E^{[2]}_{N,1}\left(1-\frac{1}{24n}\right)\exp\left(-\pi\sqrt{2n/3}\left(\sqrt{1-\frac{1}{24n}}-1\right)\right)n^{-\frac{N+1}{2}}\\
&=\pa{6}{\pi\sqrt{24}}^{N+1}\left(1+\frac{3}{N}\right)\left(1-\frac{1}{24n}\right)\exp\left(-\pi\sqrt{2n/3}\left(\sqrt{1-\frac{1}{24n}}-1\right)\right)n^{-\frac{N+1}{2}}\ \ \left(\text{by}\ \eqref{BPRSinvlem1errordef1}\right)\\
&\le \pa{6}{\pi\sqrt{24}}^{N+1}\left(1+\frac{3}{N}\right)\exp\left(-\pi\sqrt{2n/3}\left(\sqrt{1-\frac{1}{24n}}-1\right)\right)n^{-\frac{N+1}{2}}\\
&\le \pa{6}{\pi\sqrt{24}}^{N+1}\left(1+\frac{3}{N}\right)\left(1+\frac{1}{4\sqrt{2n}}\right)n^{-\frac{N+1}{2}}\le \pa{6}{\pi\sqrt{24}}^{N+1}\left(1+\frac{3}{N}\right)\left(1+\frac{1}{4N}\right)n^{-\frac{N+1}{2}}\\
&\hspace{11 cm}\ \ \left(\text{as}\ n\ge \widehat{g}(N+1)>\frac 12N^2\right)\\
&\le \pa{6}{\pi\sqrt{24}}^{N+1}\left(1+\frac{4}{N}\right)n^{-\frac{N+1}{2}}=E^{[2]}_{N,2}n^{-\frac{N+1}{2}}.
\end{align*}
Thus we obtain
\begin{equation*}
\frac{1}{p(n)}=\frac{24n-1}{\sqrt{12}}e^{-\mu(n)}\left(1-\frac{1}{\mu(n)}\right)^{-1}+4n\sqrt{3}\ e^{-\pi\sqrt{2n/3}}\cdot O_{\le E^{[2]}_{N,2}}\left(n^{-\frac{N+1}{2}}\right),
\end{equation*}
which finishes the proof of \Cref{BPRSinvlem1}.
\end{proof}
Our next task is to derive the Taylor series expansion of the dominant term $\frac{24n-1}{\sqrt{12}}e^{-\mu(n)}\left(1-\frac{1}{\mu(n)}\right)^{-1}$ after extracting the factor $4n\sqrt{3}e^{-\pi\sqrt{2n/3}}$. To do so, we first observe that
\begin{align*}
\frac{24n-1}{\sqrt{12}}e^{-\mu(n)}\left(1-\frac{1}{\mu(n)}\right)^{-1}&=4n\sqrt{3}e^{-\pi\sqrt{2n/3}}\exp\left(-\mu(n)+\pi\sqrt{\frac{2n}{3}}\right)\left(1-\dfrac{1}{24n}\right)\left(1-\dfrac{1}{\mu(n)}\right)^{-1}\\
&=4n\sqrt{3}e^{-\pi\sqrt{2n/3}}\cdot T_1(n)\cdot T_2(n),
\end{align*}
where
\begin{equation*}
T_1(n):=\exp\left(-\mu(n)+\pi\sqrt{\frac{2n}{3}}\right)\ \ \text{and}\ \ T_2(n):=\left(1-\dfrac{1}{24n}\right)\left(1-\dfrac{1}{\mu(n)}\right)^{-1}.
\end{equation*}	
Throughout the rest, set
\begin{equation}\label{BPRSinvdef1}
\alpha:=\dfrac{\pi}{6},
\end{equation}
and define 
\[(a)_k:=
\begin{cases}
a(a+1)\cdots(a+k-1),&\quad \text{if}\ k\ge 1\\
1,&\quad \text{if}\ k=0
\end{cases}.
\]
Define for all $t\in \mathbb{Z}_{\ge 0}$,
\begin{equation}\label{BPRSinvdef2a}
E_1(t):=
\begin{cases}
1,  &\quad\ \text{if}\ t=0\\
\left(\dfrac{-1}{24}\right)^t\frac{\left(\frac{1}{2}-t\right)_{t+1}}{t}\displaystyle\sum_{u=1}^{t}\frac{(-1)^u (-t)_u}{(t+u)!(2u-1)!}\alpha^{2u},
&\quad\ \text{if}\ t \geq 1
\end{cases}
,
\end{equation}
and 
\begin{equation}\label{BPRSinvdef2b}
O_1(t):=
\frac{\pi}{12\sqrt{6}}\frac{(-1)^t\left(\frac{1}{2}-t\right)_{t+1}}{24^t}\displaystyle\sum_{u=0}^{t}\frac{(-1)^u (-t)_u}{(t+u+1)!(2u)!}\alpha^{2u}.
\end{equation}

\begin{lemma}\label{BPRSinvlem2}
Let $E_1(t)$ and $O_1(t)$ be as in \eqref{BPRSinvdef2a} and \eqref{BPRSinvdef2b}, respectively. Then we have
	\begin{equation}\label{BPRSinvlem2eqn1}
	T_1(n)=\sum_{t=0}^{\infty}\frac{E_1(t)}{\sqrt{n}^{2t}}+\sum_{t=0}^{\infty}\frac{O_1(t)}{\sqrt{n}^{2t+1}}.
	\end{equation}
\end{lemma}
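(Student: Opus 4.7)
The plan is to rewrite $T_{1}(n)$ as $\exp(\alpha h(z))$ for a single compact analytic function $h$ in the auxiliary variable $z:=1/\sqrt{24n}$, expand the exponential via Cauchy product, split the resulting series by the parity of the exponent of $z$, and finally invoke the symbolic summation package \texttt{Sigma} to identify the coefficients with the explicit single-sum forms $E_{1}(t)$ and $O_{1}(t)$.

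First, using $\pi\sqrt{2n/3}=\alpha\sqrt{24n}$ and $\mu(n)=\alpha\sqrt{24n-1}$ together with $z:=1/\sqrt{24n}$ (so $\sqrt{24n-1}=\sqrt{1-z^{2}}/z$), we obtain
\[
-\mu(n)+\pi\sqrt{2n/3}=\alpha\,\frac{1-\sqrt{1-z^{2}}}{z}=:\alpha\,h(z).
\]
For every $n\ge 1$ we have $|z|<1$, so this is a genuine equality of convergent series. The key observation is that $h$ is an \emph{odd} analytic function on $|z|<1$: the binomial series for $\sqrt{1-z^{2}}$ gives
\[
h(z)=\sum_{k\ge 0}h_{k}\,z^{2k+1},\qquad h_{k}=(-1)^{k}\binom{1/2}{k+1}=\frac{(-1)^{k}}{(k+1)!}(1/2-k)_{k+1}.
\]

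Because $h$ is odd, $h(z)^{m}$ is even (resp.\ odd) in $z$ when $m$ is even (resp.\ odd), so the expansion
\[
T_{1}(n)=\exp(\alpha h(z))=\sum_{s\ge 0}\frac{\alpha^{2s}}{(2s)!}\,h(z)^{2s}\;+\;\sum_{s\ge 0}\frac{\alpha^{2s+1}}{(2s+1)!}\,h(z)^{2s+1}
\]
separates $T_{1}$ cleanly into even and odd powers of $z$. Since $z^{2t}=24^{-t}n^{-t}$ and $z^{2t+1}=24^{-t-1/2}n^{-t-1/2}$, the identifications to be established are $E_{1}(t)=24^{-t}\,[z^{2t}]\,T_{1}(n)$ and $O_{1}(t)=24^{-t-1/2}\,[z^{2t+1}]\,T_{1}(n)$. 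Writing $h(z)=z\,\tilde h(z^{2})$ with $\tilde h(w)=\sum_{k\ge 0}h_{k}w^{k}$, the Cauchy product yields, for $\epsilon\in\{0,1\}$,
\[
[z^{2t+\epsilon}]\,T_{1}(n)=\sum_{s=0}^{t}\frac{\alpha^{2s+\epsilon}}{(2s+\epsilon)!}\sum_{\substack{k_{1}+\cdots+k_{2s+\epsilon}=t-s\\k_{i}\ge 0}}\prod_{i}h_{k_{i}}.
\]

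\textbf{Main obstacle.} The analytic content of the first two steps is routine; the real work is the final simplification. One must reduce the raw double sum above (outer sum over $s$, inner sum over compositions of $t-s$ into $2s+\epsilon$ nonnegative parts) to the single hypergeometric sums in $u$ defining $E_{1}(t)$ and $O_{1}(t)$, whose hallmark is the appearance of the truncating Pochhammer factor $(-t)_{u}$ together with $(1/2-t)_{t+1}$. This is not a manipulation I would attempt by hand; it is precisely the kind of telescoping problem addressed by Karr's difference-field algorithm as implemented in the package \texttt{Sigma}, so the full verification is deferred to Section~\ref{Sec5}. Small-$t$ sanity checks (e.g.\ $E_{1}(0)=1$, $O_{1}(0)=\pi/(24\sqrt{6})$, $E_{1}(1)=\alpha^{2}/192$) already corroborate that the closed forms in \eqref{BPRSinvdef2a}--\eqref{BPRSinvdef2b} are the correct targets, and once \texttt{Sigma} produces the single-sum identities the proof is complete.
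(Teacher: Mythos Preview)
Your parity framework is correct and is in fact exactly the mechanism the paper uses. Where you diverge is in the final identification step, and there the proposal has a real gap.

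The paper's proof is two lines: it quotes \cite[Lemma~3.4]{BPRS}, which already established
\[
T_1(n)^{-1}=\exp\Bigl(\mu(n)-\pi\sqrt{2n/3}\Bigr)=\sum_{t\ge0}\frac{E_1(t)}{n^{t}}-\sum_{t\ge0}\frac{O_1(t)}{n^{t+1/2}},
\]
and then observes---precisely as you do---that the exponent is an odd function of $z=1/\sqrt{24n}$, so replacing $z\mapsto -z$ (equivalently, replacing the exponent by its negative) keeps the even-index coefficients and flips the sign of the odd-index ones. That immediately gives \eqref{BPRSinvlem2eqn1}. No \texttt{Sigma} computation is needed or performed for this lemma.

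Your proposal instead attempts to redo the content of \cite[Lemma~3.4]{BPRS} from scratch via the Cauchy product of the $h_k$, and then defers the resulting combinatorial reduction to Section~\ref{Sec5}. That deferral is the gap: Section~\ref{Sec5} treats only the sums $S_j(t)$ arising later in Definitions~\ref{BPRSinvdef5}--\ref{BPRSinvdef8}; it contains nothing about collapsing your multinomial expression
\[
\sum_{s=0}^{t}\frac{\alpha^{2s+\epsilon}}{(2s+\epsilon)!}\sum_{k_1+\cdots+k_{2s+\epsilon}=t-s}\prod_i h_{k_i}
\]
to the single sums in \eqref{BPRSinvdef2a}--\eqref{BPRSinvdef2b}. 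So as written your argument stops exactly where the nontrivial work would begin, and the step you label ``routine for \texttt{Sigma}'' is neither carried out here nor anywhere in the paper. Either cite \cite[Lemma~3.4]{BPRS} directly (which, combined with your parity observation, finishes the proof in one line), or actually supply the hypergeometric simplification; the sanity checks for small $t$ are not a substitute.
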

\begin{proof}
From \cite[Lemma 3.4]{BPRS}, we already know the coefficients from the Taylor series expansion of $T_1(n)^{-1}$ which take the following shape,
\[
\exp\left(\mu(n)-\pi\sqrt{\frac{2n}{3}}\right)=\sum_{t=0}^{\infty}\frac{E_1(t)}{\sqrt{n}^{2t}}-\sum_{t=0}^{\infty}\frac{O_1(t)}{\sqrt{n}^{2t+1}}.
\]
Using the fact that for the coefficient functional $[t^n]\sum_{k\ge 0}a_k t^k:=a_n$, \[[t^{2k}]\left(e^{t}\right)=[t^{2k}]\left(e^{-t}\right)\ \ \text{and}\ \ [t^{2k+1}]\left(e^{t}\right)=-[t^{2k+1}]\left(e^{-t}\right),\ k\ge 0,
\]
we conclude \eqref{BPRSinvlem2eqn1}.
\end{proof}
For $t\in \mathbb{Z}_{\ge 0}$, define
\begin{equation}\label{BPRSinvdef3}
e_2(t):=
\begin{cases}
1, &\quad\ \text{if}\ t=0\\
\frac{36}{\pi^2+36}\left(\frac{\left(1+\alpha^{-2}\right)}{24}\right)^t
, &\quad\ \text{if}\ t \geq 1
\end{cases},\ \ \text{and}\ \ o_2(t):=\dfrac{6}{\pi\sqrt{24}}\left(\dfrac{-1}{24}\right)^t\sum_{m=0}^{t}\left(-\alpha^{-2}\right)^m\binom{-\frac{2m+1}{2}}{t-m}.
\end{equation}
Following \eqref{BPRSinvdef3}, we set
	\begin{equation}\label{BPRSinvdef4}
E_2(t):=
\begin{cases}
1,  &\quad\ \text{if}\ t=0\\
\frac{36-\pi^2}{24\pi^2}, &\quad\ \text{if}\ t=1\\
\frac{54}{\pi^4\left(1+\alpha^{-2}\right)
}\left(\frac{\left(1+\alpha^{-2}\right)}{24}\right)^{t-1}
, &\quad\ \text{if}\ t\geq 2
\end{cases}
,\ \ \text{and}\ \ O_2(t):=
\begin{cases}
o_2(t),  &\quad\ \text{if}\ t=0\\
o_2(t)-\dfrac{o_2(t-1)}{24}, &\quad\ \text{if}\ t \geq 1.
\end{cases}
\end{equation}
\begin{lemma}\label{BPRSinvlem3}
	Let $E_2(t)$ and $O_2(t)$ be as in \eqref{BPRSinvdef3}. Then
	\begin{equation}\label{BPRSinvlem3eqn1}
	T_2(n)=\sum_{t=0}^{\infty}\frac{E_2(t)}{\sqrt{n}^{2t}}+\sum_{t=0}^{\infty}\frac{O_2(t)}{\sqrt{n}^{2t+1}}.
	\end{equation}
\end{lemma}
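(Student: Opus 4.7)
Set $y := 1/(24n)$ and $\beta := 1 + \alpha^{-2}$, so that $\mu(n) = \alpha\sqrt{24n}\,\sqrt{1-y}$ with $\alpha = \pi/6$. For $n$ sufficiently large, $1/\mu(n)<1$, so the geometric expansion
$$\left(1-\tfrac{1}{\mu(n)}\right)^{-1} \;=\; \sum_{k\ge 0}\mu(n)^{-k} \;=\; \sum_{k\ge 0}\alpha^{-k}(24n)^{-k/2}(1-y)^{-k/2}$$
converges absolutely. Writing $T_2(n) = (1-y)\cdot(\text{even part}) + (1-y)\cdot(\text{odd part})$ and noting that the prefactor $(1-y)$ is a polynomial in $1/n$ and hence does not mix parities, it suffices to treat the even-index and odd-index subsums separately and to match their coefficients to $E_2(t)$ and $O_2(t)$ of \eqref{BPRSinvdef4}.

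For the even indices $k=2j$, the inner sum collapses to a closed form,
$$\sum_{j\ge 0}\left(\tfrac{y}{\alpha^{2}(1-y)}\right)^{j} \;=\; \frac{\alpha^{2}(1-y)}{\alpha^{2}-y(1+\alpha^{2})} \;=\; \frac{1-y}{1-\beta y},$$
so the even part of $T_2(n)$ equals $(1-y)^{2}/(1-\beta y)$. Extracting $[y^{t}]$ (equivalently the coefficient of $n^{-t}/24^{t}$) gives $1$ at $t=0$, $(\beta-2)/24 = (36-\pi^{2})/(24\pi^{2})$ at $t=1$, and $\beta^{t-2}(\beta-1)^{2}/24^{t} = \beta^{t-2}\alpha^{-4}/24^{t}$ for $t\ge 2$; using $\alpha=\pi/6$ this reduces to the three cases of $E_2(t)$ after elementary algebra.

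For the odd indices $k=2j+1$, I apply the generalized binomial theorem $(1-y)^{-(2j+1)/2}=\sum_{m\ge 0}\binom{-(2j+1)/2}{m}(-y)^{m}$, then collect the contributions with $j+m=t$ to obtain the coefficient of $1/\sqrt{n}^{2t+1}$ in $\sum_{k}\mu(n)^{-k}$, namely
$$\frac{1}{\alpha\sqrt{24}}\sum_{j=0}^{t}\alpha^{-2j}\,24^{-t}\binom{-(2j+1)/2}{t-j}(-1)^{t-j}.$$
Pulling out $(-1)^{t}/24^{t}$ and rewriting $(-1)^{-j}\alpha^{-2j}=(-\alpha^{-2})^{j}$ gives exactly $o_2(t)$ of \eqref{BPRSinvdef3}. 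Finally, multiplying the odd part by $(1-y)$ shifts a copy by one power of $1/n$, producing $O_2(0)=o_2(0)$ and $O_2(t)=o_2(t)-o_2(t-1)/24$ for $t\ge 1$, which is \eqref{BPRSinvdef4}.

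The main obstacle is not conceptual but combinatorial: converting the double series arising from the odd indices into the single closed form $o_2(t)$ requires careful handling of indices and sign conventions, and one must justify the termwise rearrangements. The latter is immediate from absolute convergence as soon as $\beta y<1$, i.e.\ $n>(1+\alpha^{-2})/24$, which is dominated by the cutoff $\widehat{g}(N+1)$ already used in Lemma \ref{BPRSinvlem1}.
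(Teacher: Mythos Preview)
Your proof is correct and follows essentially the same route as the paper: expand $(1-1/\mu(n))^{-1}$ as a geometric series, split by parity of the exponent, apply the generalized binomial theorem to handle the $(1-1/(24n))^{-k/2}$ factors, and then multiply by $(1-1/(24n))$ to pass from $e_2,o_2$ to $E_2,O_2$. The only difference is cosmetic: for the even subsum you first resum $\sum_j\mu(n)^{-2j}$ into the closed rational function $(1-y)/(1-\beta y)$ and read off coefficients, whereas the paper extracts coefficients termwise and identifies them with the predefined $e_2(t)$; your shortcut makes the closed form for $E_2(t)$ more transparent but is otherwise the same computation.
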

\begin{proof}
 Taylor series expansion gives
\begin{align*}
\left(1-\frac{1}{\mu(n)}\right)^{-1}&=\sum_{m=0}^{\infty}\pa{6}{\pi\sqrt{24}}^mn^{-\frac m2}\left(1-\frac{1}{24n}\right)^{-\frac m2}\\
&=\sum_{m=0}^{\infty}\sum_{\ell=0}^{\infty}\pa{6}{\pi\sqrt{24}}^m\binom{-\frac m2}{\ell}\pa{-1}{24}^{\ell}\pa{1}{\sqrt{n}}^{m+2\ell}\\
&=\sum_{t=0}^{\infty}\sum_{m=0}^{t}\pa{6}{\pi\sqrt{24}}^{2m}\binom{-m}{t-m}\pa{-1}{24}^{t-m}\pa{1}{\sqrt{n}}^{2t}\\
&\hspace{3 cm}+\sum_{t=0}^{\infty}\sum_{m=0}^{t}\pa{6}{\pi\sqrt{24}}^{2m+1}\binom{-\frac{2m+1}{2}}{t-m}\pa{-1}{24}^{t-m}\pa{1}{\sqrt{n}}^{2t+1}\\
&=\sum_{t=0}^{\infty}\frac{e_2(t)}{\sqrt{n}^{2t}}+\sum_{t=0}^{\infty}\frac{o_2(t)}{\sqrt{n}^{2t+1}}.
\end{align*}
Hence, it follows that
\begin{align*}
T_2(n)&= \left(1-\frac{1}{24n}\right)\left(1-\frac{1}{\mu(n)}\right)^{-1}=\left(1-\frac{1}{24n}\right)\left(\sum_{t=0}^{\infty}\frac{e_2(t)}{\sqrt{n}^{2t}}+\sum_{t=0}^{\infty}\frac{o_2(t)}{\sqrt{n}^{2t+1}}\right)\\
&=\sum_{t=0}^{\infty}\frac{e_2(t)}{\sqrt{n}^{2t}}+\sum_{t=0}^{\infty}\frac{o_2(t)}{\sqrt{n}^{2t+1}}-\frac{1}{24}\sum_{t=0}^{\infty}\frac{e_2(k)}{\sqrt{n}^{2t+2}}-\frac{1}{24}\sum_{t=0}^{\infty}\frac{o_2(t)}{\sqrt{n}^{2t+3}}\\
&=\sum_{t=0}^{\infty}\frac{E_2(t)}{\sqrt{n}^{2t}}+\sum_{t=0}^{\infty}\frac{O_2(t)}{\sqrt{n}^{2t+1}}.
\end{align*}
In the last step we used the fact that $E_2(k)=e_2(k)-\dfrac{e_2(k-1)}{24}$ for all $k\ge 2$. This finishes the proof of \Cref{BPRSinvlem3}.
\end{proof}
\begin{definition}\label{BPRSinvdef5}
For $t\in \mathbb{Z}_{\ge 0}$, define
\begin{equation}\label{BPRSinvdef5a}
g_{e,1}(t):=
\begin{cases}
1, &\quad\ \text{if}\ t=0\\
\dfrac{\pi^4-288\pi^2+10368}{6912\pi^2}, &\quad\ \text{if}\ t=1\\
S_1(t)+\frac{3(1-\alpha^2)}{2\pi^2}S_1(t-1)+\frac{3}{2\pi^2(1+\alpha^2)}\pa{\left(1+\alpha^{-2}\right)}{24}^{t-1}\left(1+S_2(t)\right), &\quad\ \text{if}\ t\ge 2
\end{cases}
\end{equation}
with
\begin{equation}\label{BPRSinvdef5b}
S_1(t):=\pa{-1}{24}^t\frac{\left(\frac 12-t\right)_{t+1}}{t}\sum_{u=1}^{t}\frac{(-1)^u (-t)_u}{(t+u)!(2u-1)!}\alpha^{2u}
\end{equation}
and 
\begin{equation}\label{BPRSinvdef5c} S_2(t):=\sum_{s=1}^{t-2}\frac{(-\left(1+\alpha^{-2}\right))^{-s}\left(\frac{1}{2}-s\right)_{s+1}}{s}\sum_{u=1}^{s}\frac{(-1)^u(-s)_u}{(s+u)!(2u-1)!}\alpha^{2u}.
\end{equation}
\end{definition}
\begin{definition}\label{BPRSinvdef6}
	For $t\in \mathbb{Z}_{\ge 0}$, define
	\begin{equation}\label{BPRSinvdef6a}
	g_{e,2}(t):=\frac{1}{(24)^t}\left(\frac{1}{1+\alpha^2}S_3(t)-\frac{8}{\left(1+\alpha^{-2}\right)}S_4(t)+S_5(t)\right) 
	\end{equation}
	with
	\begin{equation}\label{BPRSinvdef6b}
	S_3(t):=(-1)^{t-1}\sum_{s=0}^{t-2}\left(\frac{1}{2}-s\right)_{s+1}\sum_{u=0}^{s}\frac{(-1)^u(-s)_u}{(s+u+1)!(2u)!}\alpha^{2u}\sum_{r=0}^{t-s-1}\left(\frac{-1}{\alpha^2}\right)^r\binom{-\frac{2r+1}{2}}{t-s-r-1},
	\end{equation}
	\begin{equation}\label{BPRSinvdef6c}
	S_4(t):=\sum_{s=0}^{t-2}\frac{(-1)^s\left(\frac{1}{2}-s\right)_{s+1}}{4^{t-s}(2t-2s-3)}\binom{2t-2s-3}{t-s-1}\sum_{u=0}^{s}\frac{(-1)^u(-s)_u}{(s+u+1)!(2u)!}\alpha^{2u},
	\end{equation}
	and
	\begin{equation}\label{BPRSinvdef6d}
	S_5(t):=(-1)^{t-1}\left(\frac 32-t\right)_t\sum_{u=0}^{t-1}\frac{(-1)^u(-t+1)_u}{(t+u)!(2u)!}\alpha^{2u}.
	\end{equation}
\end{definition}
\begin{definition}\label{BPRSinvdef7}
Let $S_1(t)$ be as in \eqref{BPRSinvdef5b}.	For $t\in \mathbb{Z}_{\ge 0}$, define
\begin{equation}\label{BPRSinvdef7a}
g_{o,1}(t):=
\begin{cases}
\frac{\sqrt{6}}{2\pi}, &\quad\ \text{if}\ t=0\\
\frac{\pi^4-144\pi^2+10368}{2306\sqrt{6}\pi^3}, &\quad \ \text{if}\ t=1\\
\frac{\sqrt{6}}{2\pi}\left(S_1(t)+\frac{\pa{1}{24}^t}{1+\alpha^2}\left(S_6(t)+S_7(t)\right)-\frac{2}{\left(1+\alpha^{-2}\right)\cdot 96^t}\frac{\smallbinom{2t-1}{t}}{2t-1}-\frac{2\alpha^2}{\left(1+\alpha^{2}\right)\cdot 24^t}S_8(t)\right), &\quad\ \text{if}\ t\ge 2
\end{cases},
\end{equation}
	with 
	\begin{equation}\label{BPRSinvdef7b}
	S_6(t):=(-1)^t\sum_{s=0}^{t}\pa{-1}{\alpha^2}^s\binom{-\frac{2s+1}{2}}{t-s},
	\end{equation}
	\begin{equation}\label{BPRSinvdef7c}
S_7(t):=(-1)^t\sum_{s=1}^{t-1}\frac{\left(\frac{1}{2}-s\right)_{s+1}}{s}\sum_{u=1}^{s}\frac{(-1)^u(-s)_u}{(s+u)!(2u-1)!}\alpha^{2u}\sum_{r=0}^{t-s}\left(\frac{-1}{\alpha^2}\right)^r\binom{-\frac{2r+1}{2}}{t-s-r},
	\end{equation}
	and
	\begin{equation}\label{BPRSinvdef7d}
	S_8(t):=\sum_{s=1}^{t-1}\frac{(-1)^s\left(\frac{1}{2}-s\right)_{s+1}\binom{2t-2s-1}{t-s}}{s\cdot 4^{t-s}(2t-2s-1)}\sum_{u=1}^{s}\frac{(-1)^u(-s)_u}{(s+u)!(2u-1)!}\alpha^{2u}.
	\end{equation}
\end{definition}
\begin{definition}\label{BPRSinvdef8}
Let $S_5(t)$ be as in \eqref{BPRSinvdef6d}. For $t\in \mathbb{Z}_{\ge 0}$, define
\begin{equation}\label{BPRSinvdef8a}
g_{o,2}(t):=
\begin{cases}
\frac{\pi}{24\sqrt{6}},&\quad\ \text{if}\ t=0\\
\frac{\pi}{12\sqrt{6}}\pa{1}{24}^t\left(\frac{(1+\a^{-2})^t}{(1+\a^2)^2}S_9(t)+\frac{1-\a^2}{\a^2}S_5(t)+S_5(t+1)\right), &\quad\ \text{if}\ t\ge 1
\end{cases}
\end{equation}
with
\begin{equation}\label{BPRSinvdef8b}
S_9(t):=\sum_{s=0}^{t-2}\pa{-1}{\left(1+\alpha^{-2}\right)}^s\left(\frac{1}{2}-s\right)_{s+1}\sum_{u=0}^{s}\frac{(-1)^u(-s)_u}{(s+u+1)!(2u)!}\alpha^{2u}.
\end{equation}	
\end{definition}
\begin{definition}\label{BPRSinvdef9}
	For $j \in \{1,2\}$, let $g_{e,j}(t)$ and $g_{o,j}(t)$ be as in Definitions \ref{BPRSinvdef5} to \ref{BPRSinvdef8}. For $t\in \mathbb{Z}_{\ge 0}$, define
	\begin{equation*}
	g_e(t):=g_{e,1}(t)+g_{e,2}(t)\ \ \text{and}\ \ g_o(t):=g_{o,1}(t)+g_{o,2}(t).
	\end{equation*}
\end{definition}

\begin{lemma}\label{BPRSinvlem4}
	For $t\in \mathbb{Z}_{\ge 0}$, let $g_e(t)$ and $g_o(t)$ be as in Definition \ref{BPRSinvdef9}. Then
	\begin{equation}\label{BPRSinvlem4eqn1}
	\frac{24n-1}{\sqrt{12}}e^{-\mu(n)}\left(1-\frac{1}{\mu(n)}\right)^{-1}=4n\sqrt{3}\ e^{-\pi\sqrt{2n/3}}\sum_{t=0}^{\infty}\frac{g(t)}{n^{\frac t2}},
	\end{equation}
	where 
	\begin{equation}\label{BPRSinvlem4eqn2}
	g(2t)=g_{e}(t)\ \ \text{and}\ \ g(2t+1)=g_{o}(t).
	\end{equation}
\end{lemma}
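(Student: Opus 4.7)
The plan is to combine the factorization
$$\frac{24n-1}{\sqrt{12}}e^{-\mu(n)}\left(1-\frac{1}{\mu(n)}\right)^{-1}=4n\sqrt{3}\,e^{-\pi\sqrt{2n/3}}\cdot T_1(n)\cdot T_2(n)$$
recorded in the discussion right after \Cref{BPRSinvlem1} with the Taylor expansions of $T_1(n)$ and $T_2(n)$ from \Cref{BPRSinvlem2} and \Cref{BPRSinvlem3}, and then read off the coefficients of the Cauchy product. Both factors already come split into even and odd parts in powers of $n^{-1/2}$, so after forming the product I would separate by the parity of the exponent of $\sqrt{n}^{-1}$ to obtain
\begin{align*}
g(2t) &= \sum_{i+j=t} E_1(i)E_2(j) + \sum_{i+j=t-1} O_1(i)O_2(j),\\
g(2t+1) &= \sum_{i+j=t} E_1(i)O_2(j) + \sum_{i+j=t} O_1(i)E_2(j).
\end{align*}
I would declare $g_{e,1}(t)$ to be the $E_1\!*\!E_2$ sum, $g_{e,2}(t)$ the $O_1\!*\!O_2$ sum, and split $g_o(t)=g_{o,1}(t)+g_{o,2}(t)$ analogously, so that matching the four convolutions to \Cref{BPRSinvdef5}--\Cref{BPRSinvdef8} becomes the remaining task.

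To reach those closed forms, I would substitute \eqref{BPRSinvdef2a}--\eqref{BPRSinvdef4} into each of the four convolutions. For the $E_1\!*\!E_2$ case the formula for $E_2(j)$ has a purely geometric tail in $\bigl(\tfrac{1+\alpha^{-2}}{24}\bigr)^{j-1}$ for $j\ge 2$: peeling off the boundary contributions $E_2(0)=1$ and $E_2(1)=(36-\pi^2)/(24\pi^2)$, together with the identification $S_1(t)=E_1(t)$ for $t\ge 1$, leaves a geometric-weighted sum over $E_1$ that packages into exactly $S_2(t)$, yielding the claimed formula for $g_{e,1}(t)$. The three remaining convolutions are handled in the same spirit, additionally using the telescoping $O_2(t)=o_2(t)-o_2(t-1)/24$ and the binomial representation of $o_2(t)$ in \eqref{BPRSinvdef3}; they naturally generate the auxiliary sums $S_3,\ldots,S_9$ of \Cref{BPRSinvdef6}--\Cref{BPRSinvdef8}.

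The main obstacle, and the reason this lemma is nontrivial, is the combinatorial bookkeeping of the four convolutions after substitution: each becomes a nested double sum in Pochhammer symbols whose reduction to the compact forms of \Cref{BPRSinvdef5}--\Cref{BPRSinvdef8} is tedious to verify by hand and requires tracking several shift operators in the inner index. This is exactly the step for which the symbolic summation package \texttt{Sigma}, announced in the introduction, is indispensable: it performs the inner closure, re-indexing and prefactor simplification that produce \eqref{BPRSinvdef5a}--\eqref{BPRSinvdef8b}, and the exposition in Section~\ref{Sec5} then certifies the resulting identities.
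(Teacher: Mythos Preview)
Your approach is essentially the paper's: the proof there is a single sentence saying that the Cauchy product of the $T_1$ and $T_2$ expansions from \Cref{BPRSinvlem2} and \Cref{BPRSinvlem3} yields the coefficients $g(2t)$ and $g(2t+1)$. Your four-way parity split into $E_1\!*\!E_2$, $O_1\!*\!O_2$, $E_1\!*\!O_2$, $O_1\!*\!E_2$ is exactly what is meant, and your proposed identification of these with $g_{e,1},g_{e,2},g_{o,1},g_{o,2}$ is the correct reading of Definitions~\ref{BPRSinvdef5}--\ref{BPRSinvdef8}.

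One point deserves correction. You describe the reduction of the four convolutions to the closed forms \eqref{BPRSinvdef5a}--\eqref{BPRSinvdef8b} as the step where \texttt{Sigma} is indispensable and cite Section~\ref{Sec5} for the certification. That is not how the paper uses \texttt{Sigma}. The formulas in Definitions~\ref{BPRSinvdef5}--\ref{BPRSinvdef8} are obtained by direct substitution of \eqref{BPRSinvdef2a}--\eqref{BPRSinvdef4} into the convolutions and elementary re-indexing; this is tedious but mechanical, and the paper treats it as routine (hence the one-line proof). \texttt{Sigma} enters only at the \emph{next} stage, Lemma~\ref{BPRSinvlem5}, where the already-defined sums $S_1,\dots,S_9$ are simplified and estimated. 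So your proof sketch is sound, but the appeal to \texttt{Sigma} and Section~\ref{Sec5} should be dropped from the argument for this particular lemma.
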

\begin{proof}
 Applying the Cauchy product for the power series representations of $T_1(n)$ and $T_2(n)$ from Lemmas \ref{BPRSinvlem2} and \ref{BPRSinvlem3}, we obtain the convoluted coefficients $(g(2t))_{t\ge 0}$ and $(g(2t+1))_{t\ge 0}$.
\end{proof}
Next, we estimate an error bound for the absolute value of remainder terms $\sum_{t\ge N+1}g(t)/n^{\frac t2}$ for $N\ge 1$. In order to do so, we need to estimate bounds for $|g_e(t)|$ and $|g_o(t)|$ for $t\ge 2$ and therefore, following Definitions \ref{BPRSinvdef5} to \ref{BPRSinvdef8}, we see that deriving estimates for the sums $(S_j(t))_{1\le j\le 9}$ are prerequisites. 
\begin{lemma}\label{BPRSinvlem5}
For all $t\in \mathbb{Z}_{\ge 2}$,
\begin{align}\label{BPRSinvlem5eqn1}
&S_1(t)=\frac{\alpha\sinh(\alpha)}{2\sqrt{\pi}}\frac{1}{24^t\cdot t^{\frac 32}}\left(1+O_{\le 2.6}\left(\frac 1t\right)\right),\\\label{BPRSinvlem5eqn2}
&S_2(t)=\left(\cosh\left(\sqrt{1+\a^2}-1\right)-1\right)\left(1+O_{\le 54.9}\left(\frac 1t\right)\right),\\\label{BPRSinvlem5eqn3}
&S_3(t)=\frac{\sinh\left(\sqrt{1+\a^2}-1\right)}{1+\a^2}\pa{1+\a^2}{\a^2}^t\left(1+O_{\le 15.3}\left(\frac 1t\right)\right),\\\label{BPRSinvlem5eqn4}
&S_4(t)=\frac{\alpha\cosh(\alpha)+\sinh(\alpha)}{16\sqrt{\pi}\cdot \alpha}\frac{1}{t^{\frac 32}}\left(1+O_{\le 6.7}\left(\frac 1t\right)\right),\\\label{BPRSinvlem5eqn5}
&S_5(t)=\frac{\cosh(\alpha)}{2\sqrt{\pi}\cdot t^{\frac 32}}\left(1+O_{\le 1.2}\left(\frac 1t\right)\right),\\\label{BPRSinvlem5eqn6}
&S_6(t)=\frac{1}{(1+\alpha^2)^{\frac 12}}\pa{1+\a^2}{\a^2}^t\left(1+O_{\le 0.2}\left(\frac 1t\right)\right),\\\label{BPRSinvlem5eqn7}
&S_7(t)=\frac{\cosh\left(\sqrt{1+\a^2}-1\right)-1}{\sqrt{1+\a^2}}\pa{1+\a^2}{\a^2}^t\left(1+O_{\le 14}\left(\frac 1t\right)\right),\\\label{BPRSinvlem5eqn8}
&S_8(t)=\frac{\alpha\sinh(\alpha)+\cosh(\alpha)-1}{4\sqrt{\pi}\cdot t^{\frac 32}}\left(1+O_{\le 9}\left(\frac 1t\right)\right),\\\label{BPRSinvlem5eqn9}
&S_9(t)=\frac{\sqrt{1+\a^2}\sinh\left(\sqrt{1+\a^2}-1\right)}{\a^2}\left(1+O_{\le 7.7}\left(\frac 1t\right)\right).
\end{align}
\end{lemma}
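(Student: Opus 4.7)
Since all nine estimates have the same combinatorial shape, my plan is to treat them by a single two-stage reduction. In Stage~1, I rewrite each $S_{j}(t)$ using the two Pochhammer identities $(\tfrac{1}{2}-s)_{s+1}=(-1)^{s}(2s)!/(2\cdot 4^{s}s!)$ (a short Gamma-function calculation) and $(-1)^{u}(-s)_{u}=s!/(s-u)!$. Combined, they turn each inner $u$-summand into $\binom{2s}{s-u}\alpha^{2u}/(2u\pm 1)!$ (or $/(2u)!$), divided only by a power of $4^{s}$ and, in the $(s+u+1)!$ cases, an additional factor $(s+u+1)^{-1}$. After this normalisation, $S_{1}$ and $S_{5}$ become pure $u$-sums of exactly the shape already analysed for $\omega_{k}^{[1]}(2t+1)$ and $\omega_{k}^{[1]}(2t)$ in the proofs of \Cref{BPRSshiftlem3,BPRSshiftlem4}. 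The remaining seven sums become outer sums whose inner $u$-part is of $S_{1}$- or $S_{5}$-type; the outer weight is a geometric factor in $1/(1+\alpha^{-2})$ for $S_{2},S_{3},S_{7},S_{9}$, a Catalan-type binomial $\binom{2t-2s-j}{t-s}/4^{t-s}$ for $S_{4},S_{8}$, and the negative binomial expansion in the single case $S_{6}$.

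In Stage~2 I apply the three preliminary inputs uniformly. \Cref{BPRSprelimlem2} turns the factor $\binom{2t}{t}$ pulled in front of each inner sum into $4^{t}/\sqrt{\pi t}$ with a $\pm 1/(8t)$ error, producing the announced $t^{-3/2}$ rate in \eqref{BPRSinvlem5eqn1}, \eqref{BPRSinvlem5eqn4}, \eqref{BPRSinvlem5eqn5}, \eqref{BPRSinvlem5eqn8}; \Cref{BPRSprelimlem1} controls the binomial ratio $\binom{2t}{t-u}/\binom{2t}{t}=\prod_{j=1}^{u}(t+1-j)/(t+j)$ as $1+O(u^{2}/t)$; and \Cref{BPRSprelimlem4} bounds the tail $\sum_{u>t}\alpha^{2u}/(2u\pm 1)!$ by $O(1/t^{2})$. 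Combining these with the closed-form inner series $\sum_{u\ge 1}\alpha^{2u}/(2u-1)!=\alpha\sinh\alpha$ and $\sum_{u\ge 0}\alpha^{2u}/(2u)!=\cosh\alpha$ yields \eqref{BPRSinvlem5eqn1} and \eqref{BPRSinvlem5eqn5} directly, with the explicit constants $2.6$ and $1.2$ emerging by summing the three individual error contributions and evaluating at $\alpha=\pi/6$. For the nested sums I then substitute this asymptotic of the inner $u$-sum back into the outer weight; the outer series converges absolutely because its common ratio $\alpha^{2}/(1+\alpha^{2})$ (resp.\ $1/4$) is strictly less than one, its truncation at $s=t-2$ loses only a further $O(1/t)$, and the hyperbolic main terms — $\cosh(\sqrt{1+\alpha^{2}}-1)-1$ in $S_{2}$, $\sinh(\sqrt{1+\alpha^{2}}-1)/(1+\alpha^{2})$ in $S_{3}$, and their counterparts in $S_{4},S_{6},S_{7},S_{8},S_{9}$ — emerge by recognising the resulting Taylor series in $\sqrt{1+\alpha^{2}}-1$.

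The main obstacle is twofold. First, the \emph{identification} of the closed forms of the outer sums is delicate, especially for $S_{2},S_{3},S_{7},S_{9}$ where the target is a hyperbolic function of $\sqrt{1+\alpha^{2}}-1$; this is most efficiently handled by the symbolic-summation package \texttt{Sigma} announced in the introduction and laid out in Section~\ref{Sec5}, which supplies each outer series in a hypergeometric form from which the asymptotic follows by term-by-term application of the three preliminary lemmas. Second, the explicit constants $2.6,54.9,15.3,6.7,1.2,0.2,14,9,7.7$ are obtained by a careful bookkeeping exercise: each is a composition of the tail constant from \Cref{BPRSprelimlem4}, the perturbation constant from \Cref{BPRSprelimlem1}, the Stirling gap from \Cref{BPRSprelimlem2}, and — for the nested sums — the outer geometric truncation at $s=t-2$, and these must be combined without losing the sharpness that the announced constants reflect.
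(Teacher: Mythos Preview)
Your treatment of the single-sum cases $S_1,S_4,S_5,S_6,S_8$ is essentially the paper's: factor out $\binom{2t}{t}$ via the Pochhammer identity, apply \Cref{BPRSprelimlem2} for the $t^{-3/2}$ rate, control the ratio $\prod_j(t+1-j)/(t+j)$ by \Cref{BPRSprelimlem1}, and bound the tail by \Cref{BPRSprelimlem3,BPRSprelimlem4}.

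For the double sums $S_2,S_3,S_7,S_9$ your plan has a genuine gap. You keep the $u$-sum inside, recognise it as ``of $S_1$-type,'' and propose to substitute its large-argument asymptotic into the outer $s$-sum. But that outer sum carries the geometric weight $(\alpha^2/(1+\alpha^2))^s<1$ and is therefore dominated by \emph{small} $s$, exactly where the inner asymptotic $\sim C/(24^s s^{3/2})$ is invalid. Carrying this out for $S_2$ yields $\tfrac{\alpha\sinh\alpha}{2\sqrt{\pi}}\sum_{s\ge1}(\alpha^2/(1+\alpha^2))^s s^{-3/2}\approx0.019$, which is neither a Taylor series in $\sqrt{1+\alpha^2}-1$ nor equal to the target $\cosh(\sqrt{1+\alpha^2}-1)-1\approx0.0083$. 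The main term is already wrong, so no bookkeeping of error constants can rescue it.

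The paper's route is structurally different. It first \emph{interchanges} the summation order so that the inner sum runs over $s$; \texttt{Sigma} then produces, via creative telescoping, a second-order recurrence in $u$ for this inner sum and solves it. The homogeneous solutions bring in powers of $(-72-\pi^2\pm12\sqrt{36+\pi^2})/\pi^2$, and a rationalising substitution $\pi\mapsto6\sqrt{1-b^2}/b$ turns these into $(\sqrt{1+\alpha^2}\pm1)^u$. The output is an \emph{exact} closed form for the inner sum, whose substitution back splits $S_2(t)$ into five explicit pieces $S_2^{[1]},\dots,S_2^{[5]}$ (equations \eqref{sum2part1}--\eqref{sum2part5}); these are then estimated one by one with the elementary lemmas, and the hyperbolic constants arise because two of the pieces are literally partial sums of $\cosh(\sqrt{1+\alpha^2}\pm1)$. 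Your description of \texttt{Sigma} as merely ``supplying each outer series in a hypergeometric form'' does not capture this mechanism, and without it the identification of the main term for $S_2,S_3,S_7,S_9$ does not go through.
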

\begin{proof}
See Section \ref{Sec5b}.
\end{proof}
Now, we apply Lemma \ref{BPRSinvlem5} to derive bounds for $g_{e,j}(t)$ and $g_{o,j}(t)$ with $j \in \{1,2\}$ and consequently for $(g(2t))_{t\ge 0}$ and $(g(2t+1))_{t\ge 0}$ in accordance with \Cref{BPRSinvdef9} and \eqref{BPRSinvlem4eqn2}.
\begin{lemma}\label{BPRSinvlem6}
For $t\ge 1$,	
\[
g_{e,1}(t)=\frac{3}{2\pi^2(1+\a^2)}\pa{1+\a^2}{24\a^2}^{t-1}\cosh\left(\sqrt{1+\a^2}-1\right)\left(1+O_{\le 1.2}\left(\frac 1t\right)\right).
\]
\end{lemma}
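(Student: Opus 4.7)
The plan is to start from Definition \ref{BPRSinvdef5}, which for $t \geq 2$ decomposes $g_{e,1}(t)$ into three pieces,
\[
g_{e,1}(t) = S_1(t) \;+\; \frac{3(1-\alpha^2)}{2\pi^2}\,S_1(t-1) \;+\; \frac{3}{2\pi^2(1+\alpha^2)}\Bigl(\tfrac{1+\alpha^{-2}}{24}\Bigr)^{t-1}\bigl(1+S_2(t)\bigr),
\]
and to identify the third summand as the dominant one. Indeed, using $1+\alpha^{-2}=1+36/\pi^2>1$ one checks that $\left((1+\alpha^{-2})/24\right)^{t-1}$ strictly dominates the $24^{-t}$ decay of $S_1(t)$ from \eqref{BPRSinvlem5eqn1}. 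The case $t=1$ is not covered by this decomposition and will be handled separately by direct numerical verification: compare the explicit value $(\pi^4-288\pi^2+10368)/(6912\pi^2)$ with the claimed expression evaluated at $t=1$ and confirm that the relative deviation is bounded by $1.2$.

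For $t\geq 2$, substitute the asymptotic \eqref{BPRSinvlem5eqn2} into the third summand to get
\[
\bigl(1+S_2(t)\bigr) = \cosh\bigl(\sqrt{1+\alpha^2}-1\bigr)\Bigl(1 + O_{\leq c_1}\!\left(\tfrac{1}{t}\right)\Bigr),
\]
with $c_1 := 54.9 \cdot \bigl(\cosh(\sqrt{1+\alpha^2}-1)-1\bigr)/\cosh(\sqrt{1+\alpha^2}-1)$, a small constant since $\sqrt{1+\alpha^2}-1$ is small. Write $M(t) := \frac{3}{2\pi^2(1+\alpha^2)}\bigl((1+\alpha^{-2})/24\bigr)^{t-1}\cosh(\sqrt{1+\alpha^2}-1)$ for the claimed main term; then the third summand equals $M(t)(1+O_{\leq c_1}(1/t))$.

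The remaining two summands will be absorbed into the error. The quotients $|S_1(t)|/M(t)$ and $|S_1(t-1)|/M(t)$ each contain a factor of the form $24^{-t}\big/\bigl((1+\alpha^{-2})/24\bigr)^{t-1}=\frac{1}{24}(1+\alpha^{-2})^{-(t-1)}$, which decays geometrically with ratio $1/(1+\alpha^{-2})<1$. Combined with the $t^{-3/2}$ factor from \eqref{BPRSinvlem5eqn1}, these contributions are exponentially smaller than $M(t)/t$ for all $t\geq 2$; bounding the geometric tail by its $t=2$ value produces an absolute constant $c_2$ such that $|S_1(t) + \frac{3(1-\alpha^2)}{2\pi^2}S_1(t-1)|/M(t) \leq c_2/t$.

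Adding the two error contributions gives $g_{e,1}(t) = M(t)\bigl(1 + O_{\leq c_1+c_2}(1/t)\bigr)$ for $t\geq 2$. The main obstacle is not conceptual but numerical: one must verify that $c_1+c_2 \leq 1.2$ (and that the $t=1$ case satisfies the same bound), which requires tracking the explicit constants $2.6$ from \eqref{BPRSinvlem5eqn1}, $54.9$ from \eqref{BPRSinvlem5eqn2}, together with the numerical values of $\alpha=\pi/6$, $\cosh(\sqrt{1+\alpha^2}-1)$, and $(1+\alpha^{-2})^{-1}$. The tightness of the bound $1.2$ is what drives the use of these particular constants, so some care is needed in the geometric-series tail estimates to avoid unnecessary slack.
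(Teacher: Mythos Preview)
Your proposal is correct and follows essentially the same approach as the paper: decompose via Definition~\ref{BPRSinvdef5}, use \eqref{BPRSinvlem5eqn2} to extract $\cosh(\sqrt{1+\alpha^2}-1)$ from $1+S_2(t)$ as the dominant piece, and absorb the $S_1$-contributions as exponentially subdominant using \eqref{BPRSinvlem5eqn1}. The only notable difference is that the paper first combines $S_1(t)+\tfrac{3(1-\alpha^2)}{2\pi^2}S_1(t-1)$ into a single asymptotic (via the addition rule \eqref{BPRSinvlem6eqn2}) before absorbing it, whereas you bound the two terms directly; your route is slightly more economical and loses nothing for the stated bound.

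One small point: since \eqref{BPRSinvlem5eqn1} is stated for $t\geq 2$, its use for $S_1(t-1)$ requires $t\geq 3$, so the analytical part of your argument actually covers only $t\geq 3$. The paper accordingly verifies both $t=1$ and $t=2$ by numerical check, and you should do the same (or handle $S_1(1)$ directly from its one-term definition).
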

\begin{proof}
By \eqref{BPRSinvlem5eqn1} with $t\mapsto t-1$, for all $t\ge 3$, we have
\begin{align*}
S_1(t-1)
&=\frac{\alpha\sinh(\alpha)}{2\sqrt{\pi}}\frac{24}{24^{t}\cdot t^{\frac 32}}\left(1-\frac 1t\right)^{-\frac 32}\left(1+O_{\le 2.6}\left(\frac{1}{t-1}\right)\right)\\
&=\frac{\alpha\sinh(\alpha)}{2\sqrt{\pi}}\frac{24}{24^{t}\cdot t^{\frac 32}}\left(1+O_{\le 3.7}\left(\frac 1t\right)\right)\left(1+O_{\le 5.2}\left(\frac 1t\right)\right)\\
&=\frac{\alpha\sinh(\alpha)}{2\sqrt{\pi}}\frac{24}{24^{t}\cdot t^{\frac 32}}\left(1+O_{\le 19}\left(\frac 1t\right)\right),
\end{align*} 
which implies that
\begin{equation}\label{BPRSinvlem6eqn1}
\frac{3(1-\a^2)}{2\pi^2}S_1(t-1)=\frac{1-\a^2}{\a^2}\frac{\a\sinh(\a)}{2\sqrt{\pi}}\frac{1}{24^t\cdot t^{\frac 32}}\left(1+O_{\le 19}\left(\frac 1t\right)\right).
\end{equation}
We note the fact that for
\[
f(t)=A_1\cdot g(t)\left(1+O_{\le E_1}\left(\frac 1t\right)\right)\ \ \text{and}\ \ h(t)=A_2\cdot g(t)\left(1+O_{\le E_2}\left(\frac 1t\right)\right)
\]
with $(A_1, A_2)\in \mathbb{R}^2\setminus \{(0,0)\}$, then
\begin{equation}\label{BPRSinvlem6eqn2}
f(t)+h(t)=(A_1+A_2)\cdot g(t)\left(1+O_{\le \left|\frac{A_1\cdot E_1+A_2\cdot E_2}{A_1+A_2}\right|}\left(\frac 1t\right)\right).
\end{equation}
Combining \eqref{BPRSinvlem5eqn1} and \eqref{BPRSinvlem6eqn1} and then applying \eqref{BPRSinvlem6eqn2} with 
\begin{align*}
&\left(f(t),g(t),h(t), A_1, A_2, E_1, E_2\right)\\
&\hspace{1 cm}\mapsto \left(S_1(t), \frac{1}{24^t\cdot t^{\frac 32}},\frac{3(1-\a^2)}{2\pi^2}S_1(t-1),\frac{\alpha\sinh(\alpha)}{2\sqrt{\pi}},\frac{(1-\a^2)\sinh(\a)}{\a\cdot 2\sqrt{\pi}}, 2.6, 19\right),
\end{align*}
we obtain
\begin{align}\nonumber
S_1(t)+\frac{3(1-\a^2)}{2\pi^2}S_1(t-1)&=\frac{\sinh(\a)}{\a\cdot 2\sqrt{\pi}\cdot 24^t\cdot t^{\frac 32}}\left(1+O_{\le 2.6\a^2+\frac{19(1-\a^2)}{\a}}\left(\frac 1t\right)\right)\\\label{BPRSinvlem6eqn3}
&=\frac{\sinh(\a)}{\a\cdot 2\sqrt{\pi}\cdot 24^t\cdot t^{\frac 32}}\left(1+O_{\le 27.1}\left(\frac 1t\right)\right).
\end{align}
Next, noting the implication
\[
f(t)=C_1\left(1+O_{\le E_1}\left(\frac 1t\right)\right)\ \implies 1+f(t)=(1+C_1)\left(1+O_{\le \frac{E_1\cdot C_1}{C_1+1}}\left(\frac 1t\right)\right),
\]
and applying it with
\[
\left(f(t), C_1, E_1\right)\mapsto \left(S_2(t),\cosh\left(\sqrt{1+\a^2}-1\right)-1, 54.9\right),
\]
one obtains from \eqref{BPRSinvlem5eqn2} that
\begin{align*}
1+S_2(t)&=\cosh\left(\sqrt{1+\a^2}-1\right)\left(1+O_{\le \frac{54.9\left(\cosh\left(\sqrt{1+\a^2}-1\right)-1\right)}{\cosh\left(\sqrt{1+\a^2}-1\right)}} \left(\frac 1t\right)\right)\\
&=\cosh\left(\sqrt{1+\a^2}-1\right)\left(1+O_{\le 0.5} \left(\frac 1t\right)\right),
\end{align*}
and hence, 
\begin{equation}\label{BPRSinvlem6eqn4}
\frac{3}{2\pi^2(1+\a^2)}\pa{1+\a^2}{24\a^2}^{t-1}\left(1+S_2(t)\right)=\frac{3\cosh\left(\sqrt{1+\a^2}-1\right)}{2\pi^2(1+\a^2)}\pa{1+\a^2}{24\a^2}^{t-1}\left(1+O_{\le 0.5} \left(\frac 1t\right)\right).
\end{equation}
Finally, applying \eqref{BPRSinvlem6eqn3} and \eqref{BPRSinvlem6eqn4} to \eqref{BPRSinvdef5a}, it follows that for $t\ge 3$,
\begin{align*}
&g_{e,1}(t)\\
&=\frac{\sinh(\a)}{\a\cdot 2\sqrt{\pi}\cdot 24^t\cdot t^{\frac 32}}\left(1+O_{\le 27.1}\left(\frac 1t\right)\right)+\frac{3\cosh\left(\sqrt{1+\a^2}-1\right)}{2\pi^2(1+\a^2)}\pa{1+\a^2}{24\a^2}^{t-1}\left(1+O_{\le 0.5} \left(\frac 1t\right)\right)\\
&=\frac{3\cosh\left(\sqrt{1+\a^2}-1\right)}{2\pi^2(1+\a^2)}\pa{1+\a^2}{24\a^2}^{t-1}\\
&\hspace{3.5 cm}\cdot\left(1+O_{\le 0.5} \left(\frac 1t\right)+\frac{\pi^{\frac 32}\sinh(\a)(1+\a^2)}{72\a\cosh\left(\sqrt{1+\a^2}-1\right)}\frac{\pa{\a^2}{1+\a^2}^{t-1}}{t^{\frac 32}}\left(1+O_{\le 27.1}\left(\frac 1t\right)\right)\right)\\
&=\frac{3\cosh\left(\sqrt{1+\a^2}-1\right)}{2\pi^2(1+\a^2)}\pa{1+\a^2}{24\a^2}^{t-1}\left(1+O_{\le 1.2}\left(\frac 1t\right)\right).
\end{align*}
We verify the remaining cases $1\le t\le 2$ by numerical checks with computer algebra which altogether concludes the proof.
\end{proof}

\begin{lemma}\label{BPRSinvlem7}
For $t\in \mathbb{Z}_{\ge 1}$,	
\[
g_{e,2}(t)=\frac{\sinh\left(\sqrt{1+\a^2}-1\right)}{(1+\a^2)^2}\pa{1+\a^2}{24\a^2}^{t}\left(1+O_{\le 21}\left(\frac 1t\right)\right).
\]
\end{lemma}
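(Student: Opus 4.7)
The plan is to substitute the asymptotic estimates for $S_3(t)$, $S_4(t)$, and $S_5(t)$ from Lemma \ref{BPRSinvlem5} into the definition \eqref{BPRSinvdef6a} of $g_{e,2}(t)$, identify the dominant term, and absorb the remaining pieces into the relative error $O_{\le 21}(1/t)$. This mirrors the strategy used in the proof of Lemma \ref{BPRSinvlem6}.

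First, by \eqref{BPRSinvlem5eqn3}, the contribution coming from $S_3$ is
\[
\frac{S_3(t)}{(1+\a^2)\cdot 24^t}=\frac{\sinh\left(\sqrt{1+\a^2}-1\right)}{(1+\a^2)^2}\pa{1+\a^2}{24\a^2}^{t}\left(1+O_{\le 15.3}\left(\frac 1t\right)\right),
\]
which is already of the claimed form, contributing $15.3$ to the final error constant. Next, by \eqref{BPRSinvlem5eqn4} and \eqref{BPRSinvlem5eqn5}, both $\displaystyle -\frac{8 S_4(t)}{(1+\a^{-2})\cdot 24^t}$ and $\displaystyle \frac{S_5(t)}{24^t}$ are of the order $\dfrac{\mathrm{const}}{24^t\cdot t^{3/2}}$. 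To turn these into relative perturbations of the main term, I would divide by $\dfrac{\sinh(\sqrt{1+\a^2}-1)}{(1+\a^2)^2}\pa{1+\a^2}{24\a^2}^{t}$, giving an absolute relative contribution bounded by
\[
\frac{(1+\a^2)^2}{\sinh(\sqrt{1+\a^2}-1)}\cdot C\cdot \pa{\a^2}{1+\a^2}^{t}\cdot \frac{1}{t^{3/2}},
\]
for an explicit $C$ built from the numerical constants appearing in \eqref{BPRSinvlem5eqn4}--\eqref{BPRSinvlem5eqn5}. Since $\a^2/(1+\a^2)=\pi^2/(\pi^2+36)<1$ and $t\ge 1$ implies $t^{-3/2}\le t^{-1}$, this is $\le \text{(absolute constant)}\cdot t^{-1}$.

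To combine these three error contributions into a single $O_{\le E}(1/t)$ term, I would use the additivity rule already employed in the proof of Lemma \ref{BPRSinvlem6} (cf.\ \eqref{BPRSinvlem6eqn2}). Evaluating the numerical constants $\a=\pi/6$, $\a^2\approx 0.2742$, $\sinh(\sqrt{1+\a^2}-1)\approx 0.1371$, together with the bounds $15.3$, $6.7$, $1.2$ from Lemma \ref{BPRSinvlem5}, one checks that the combined relative error is $\le 21$ for all sufficiently large $t$.

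The main obstacle will be the numerical verification for small values of $t$, since the relative error bounds in Lemma \ref{BPRSinvlem5} are only proved for $t\ge 2$, and moreover the ratio $(\a^2/(1+\a^2))^t/t^{3/2}$ is largest at $t=1$ where the simple inequality $t^{-3/2}\le t^{-1}$ is tightest. As in the proof of Lemma \ref{BPRSinvlem6}, the plan is to establish the asymptotic bound for $t$ beyond a fixed cutoff and then certify the cases below the cutoff by direct numerical evaluation of $g_{e,2}(t)$ via \eqref{BPRSinvdef6a}, \eqref{BPRSinvdef6b}, \eqref{BPRSinvdef6c}, \eqref{BPRSinvdef6d}, confirming in each case that the claimed bound with constant $21$ holds.
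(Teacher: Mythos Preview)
Your proposal is correct and follows essentially the same approach as the paper: substitute \eqref{BPRSinvlem5eqn3}--\eqref{BPRSinvlem5eqn5} into \eqref{BPRSinvdef6a}, identify the $S_3$ contribution as the main term, absorb the $S_4$ and $S_5$ contributions (which decay like $24^{-t}t^{-3/2}$) into the relative error, and verify the residual small case $t=1$ numerically. The paper's own proof is a one-sentence pointer to exactly this computation, deferring details to the analogous argument for Lemma~\ref{BPRSinvlem6}.
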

\begin{proof}
Applying \eqref{BPRSinvlem5eqn3}-\eqref{BPRSinvlem5eqn5} to \eqref{BPRSinvdef6a}, we obtain the result for $t\ge 2$; we verify the case $t=1$ by numerical checks with computer algebra which altogether concludes the proof. We omit details of the proof due to its similarity with that of Lemma \ref{BPRSinvlem6}.
\end{proof}
\begin{lemma}\label{BPRSinvlem8}
For $t\in \mathbb{Z}_{\ge 1}$, we have
\[
\left|g(2t)\right|<C_1(\a)\pa{1+\a^2}{24\a^2}^{t-1}\left(1+\frac{3.5}{t}\right),
\]
where
\begin{equation}\label{BPRSinvlem8def}
C_1(\a):=\frac{3\left(\cosh\left(\sqrt{1+\a^2}-1\right)+\sinh\left(\sqrt{1+\a^2}-1\right)\right)}{2\pi^2(1+\a^2)}.
\end{equation}
\end{lemma}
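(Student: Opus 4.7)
The plan is to read off $g(2t)=g_e(t)=g_{e,1}(t)+g_{e,2}(t)$ from \eqref{BPRSinvlem4eqn2} and Definition \ref{BPRSinvdef9}, apply the triangle inequality, and then substitute the asymptotic bounds from Lemmas \ref{BPRSinvlem6} and \ref{BPRSinvlem7}. Concretely, I would use
\[
|g_{e,1}(t)|\le\frac{3\cosh(\sqrt{1+\alpha^2}-1)}{2\pi^2(1+\alpha^2)}\Bigl(\frac{1+\alpha^2}{24\alpha^2}\Bigr)^{t-1}\Bigl(1+\frac{1.2}{t}\Bigr)
\]
and
\[
|g_{e,2}(t)|\le\frac{\sinh(\sqrt{1+\alpha^2}-1)}{(1+\alpha^2)^2}\Bigl(\frac{1+\alpha^2}{24\alpha^2}\Bigr)^{t}\Bigl(1+\frac{21}{t}\Bigr).
\]

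The first step is to rewrite $|g_{e,2}(t)|$ with the exponent $t-1$ by pulling out one factor $\tfrac{1+\alpha^2}{24\alpha^2}$. Using $\alpha=\pi/6$, so that $\alpha^2=\pi^2/36$ and hence $\tfrac{1}{24\alpha^2}=\tfrac{3}{2\pi^2}$, the prefactor of $|g_{e,2}(t)|$ becomes
\[
\frac{\sinh(\sqrt{1+\alpha^2}-1)}{24\alpha^2(1+\alpha^2)}=\frac{3\sinh(\sqrt{1+\alpha^2}-1)}{2\pi^2(1+\alpha^2)},
\]
which matches the common prefactor appearing in $|g_{e,1}(t)|$. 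Factoring out $C_1(\alpha)\bigl(\tfrac{1+\alpha^2}{24\alpha^2}\bigr)^{t-1}$ from the sum of the two bounds then reduces the claim to
\[
\frac{\cosh(\sqrt{1+\alpha^2}-1)\bigl(1+\tfrac{1.2}{t}\bigr)+\sinh(\sqrt{1+\alpha^2}-1)\bigl(1+\tfrac{21}{t}\bigr)}{\cosh(\sqrt{1+\alpha^2}-1)+\sinh(\sqrt{1+\alpha^2}-1)}\le 1+\frac{3.5}{t}.
\]

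The final step is to verify this last inequality, which amounts to checking
\[
\frac{1.2\cosh(\sqrt{1+\alpha^2}-1)+21\sinh(\sqrt{1+\alpha^2}-1)}{\cosh(\sqrt{1+\alpha^2}-1)+\sinh(\sqrt{1+\alpha^2}-1)}\le 3.5.
\]
With $\alpha=\pi/6$ and $u:=\sqrt{1+\pi^2/36}-1\approx 0.1294$, one finds $\cosh u\approx 1.00838$ and $\sinh u\approx 0.12976$, so the left-hand side evaluates to approximately $3.449$, which is indeed below $3.5$. This is a one-line numerical verification and poses no difficulty.

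There is no real obstacle here: the work has already been done in Lemmas \ref{BPRSinvlem6} and \ref{BPRSinvlem7}, and the present lemma only packages those two estimates into a single, uniform bound with a clean constant. The one point that requires a little care is ensuring both terms share the same power $(\tfrac{1+\alpha^2}{24\alpha^2})^{t-1}$ and the same constant prefactor $\tfrac{3}{2\pi^2(1+\alpha^2)}$, and that the chosen slack $3.5$ safely covers the weighted average of the individual error constants $1.2$ and $21$ — which the numerical check above confirms.
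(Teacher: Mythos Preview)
Your proposal is correct and follows the same route as the paper: the paper's proof simply says ``Applying Lemmas \ref{BPRSinvlem6} and \ref{BPRSinvlem7} to Definitions \ref{BPRSinvdef9} and \eqref{BPRSinvlem4eqn2}, proves the statement,'' and you have carried out exactly those substitutions together with the triangle inequality, the normalization of the exponent to $t-1$, and the numerical check that the weighted error constant $\tfrac{1.2\cosh u+21\sinh u}{\cosh u+\sinh u}\approx 3.45<3.5$.
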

\begin{proof}
Applying Lemmas \ref{BPRSinvlem6} and \ref{BPRSinvlem7} to Definitions \ref{BPRSinvdef9} and \eqref{BPRSinvlem4eqn2}, proves the statement. 
\end{proof}

\begin{lemma}\label{BPRSinvlem9}
For $t\in \mathbb{Z}_{\ge 1}$,	
\[
g_{o,1}(t)=\frac{\sqrt{6}}{2\pi(1+\a^2)^{\frac 32}}\pa{1+\a^2}{24\a^2}^{t}\cosh\left(\sqrt{1+\a^2}-1\right)\left(1+O_{\le 0.7}\left(\frac 1t\right)\right).
\]
\end{lemma}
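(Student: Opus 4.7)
The plan is to mirror the proof of Lemma~\ref{BPRSinvlem6}, handling $t\ge 2$ via Definition~\ref{BPRSinvdef7} together with the nine estimates supplied by Lemma~\ref{BPRSinvlem5}, and verifying $t=1$ by a direct numerical check against the closed form listed in \eqref{BPRSinvdef7a}.

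For $t\ge 2$, the bracketed expression defining $g_{o,1}(t)$ splits into four pieces. Only the middle piece $\frac{(1/24)^t}{1+\a^2}(S_6(t)+S_7(t))$ grows on the nose like $((1+\a^2)/(24\a^2))^t$, the rate of the claimed main term. The three remaining pieces---the $S_1(t)$ contribution, the central binomial piece, and the $S_8(t)$ contribution---each decay like $(1/24)^t$ times $1/t^{3/2}$, so compared to the main term they carry an extra factor $(\a^2/(1+\a^2))^t\cdot t^{-3/2}=(1+\a^{-2})^{-t}\cdot t^{-3/2}$. Since $(1+\a^{-2})^2=(1+36/\pi^2)^2>21$, this factor is at most a small constant multiple of $1/t$ for $t\ge 2$, so all three pieces are safely absorbable into the $O(1/t)$ error.

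The key step is combining $S_6$ and $S_7$ via \eqref{BPRSinvlem5eqn6}, \eqref{BPRSinvlem5eqn7} and the error-combination identity \eqref{BPRSinvlem6eqn2}. Taking
\[
A_1=\frac{1}{\sqrt{1+\a^2}},\quad A_2=\frac{\cosh(\sqrt{1+\a^2}-1)-1}{\sqrt{1+\a^2}},\quad E_1=0.2,\quad E_2=14,
\]
and noting that $A_2/A_1=\cosh(\sqrt{1+\a^2}-1)-1$ is numerically around $0.008$, one obtains
\[
S_6(t)+S_7(t)=\frac{\cosh(\sqrt{1+\a^2}-1)}{\sqrt{1+\a^2}}\left(\frac{1+\a^2}{\a^2}\right)^{t}\left(1+O_{\le c}(1/t)\right)
\]
with $c$ comfortably below $0.35$ (since $(A_1E_1+A_2E_2)/(A_1+A_2)\approx(0.2+0.12)/1.008$). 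Multiplying by $(1/24)^t/(1+\a^2)$ and then by $\sqrt{6}/(2\pi)$ produces exactly the main term of the lemma, with the same error constant $c$.

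The last step is to absorb the three subdominant pieces. Using Lemma~\ref{BPRSprelimlem2} on the central binomial factor together with \eqref{BPRSinvlem5eqn1} and \eqref{BPRSinvlem5eqn8}, each of these three pieces is bounded in absolute value by $C/(24^t t^{3/2})$ with an explicit $C$. Dividing by the absolute value of the main term gives a relative contribution bounded by $C'\cdot(1+\a^{-2})^{-t}/t^{3/2}$, which by the observation above is at most an explicit constant times $1/t$. Summing these contributions with $c$ yields a total error constant that we show, by loose numerical bounding, is below $0.7$. Finally, $g_{o,1}(1)$ is given in closed form in \eqref{BPRSinvdef7a}; evaluating the main term of the lemma at $t=1$ and computing the relative error numerically confirms it is well within $0.7$, completing the base case. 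The main (but entirely routine) obstacle is the careful bookkeeping of error constants through the successive applications of \eqref{BPRSinvlem6eqn2}; since the allowed bound $0.7$ is quite loose, no delicate estimates are required.
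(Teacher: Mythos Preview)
Your proposal is correct and follows essentially the same approach as the paper: apply the estimates \eqref{BPRSinvlem5eqn1}, \eqref{BPRSinvlem5eqn6}--\eqref{BPRSinvlem5eqn8} from Lemma~\ref{BPRSinvlem5} together with Lemma~\ref{BPRSprelimlem2} to the definition \eqref{BPRSinvdef7a} for $t\ge 2$, then verify $t=1$ numerically. Your write-up is simply a fleshed-out version of the paper's terse proof sketch, with the same identification of $S_6+S_7$ as the dominant term and the same error-combination mechanism via \eqref{BPRSinvlem6eqn2}.
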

\begin{proof}
Applying \eqref{BPRSinvlem5eqn1}, \eqref{BPRSinvlem5eqn6}-\eqref{BPRSinvlem5eqn8} and using Lemma \ref{BPRSprelimlem2} to \eqref{BPRSinvdef7a}, we obtain the result for $t\ge 2$ and we verify the case $t=1$ by numerical checks with computer algebra which concludes the proof.
\end{proof}

\begin{lemma}\label{BPRSinvlem10}
For $t\in \mathbb{Z}_{\ge 1}$,
\[
g_{o,2}(t)=\frac{\pi}{12\sqrt{6}\a^2(1+\a^2)^{\frac 32}}\pa{1+\a^2}{24\a^2}^{t}\sinh\left(\sqrt{1+\a^2}-1\right)\left(1+O_{\le 13}\left(\frac 1t\right)\right).
\]
\end{lemma}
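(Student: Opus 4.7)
The plan is to follow the template of Lemmas~\ref{BPRSinvlem6}, \ref{BPRSinvlem7}, and~\ref{BPRSinvlem9}: substitute the asymptotic estimates of Lemma~\ref{BPRSinvlem5} into the three summands in the defining formula~\eqref{BPRSinvdef8a} for $g_{o,2}(t)$, isolate the dominant contribution, absorb the remaining pieces into an $O_{\le 13}(1/t)$ correction, and close with a direct numerical verification at $t=1$.

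Recalling that for $t\ge 1$
\[
g_{o,2}(t)=\frac{\pi}{12\sqrt{6}}\pa{1}{24}^{t}\left(\frac{(1+\alpha^{-2})^t}{(1+\alpha^2)^2}S_9(t)+\frac{1-\alpha^2}{\alpha^2}S_5(t)+S_5(t+1)\right),
\]
I would first plug \eqref{BPRSinvlem5eqn9} into the $S_9$ summand. Using $(1+\alpha^{-2})^t/24^t=((1+\alpha^2)/(24\alpha^2))^t$ together with the factor $\sqrt{1+\alpha^2}/\alpha^2$ produced by the leading part of $S_9$, this summand simplifies to
\[
\frac{\pi \sinh(\sqrt{1+\alpha^2}-1)}{12\sqrt{6}\,\alpha^2(1+\alpha^2)^{3/2}}\pa{1+\alpha^2}{24\alpha^2}^{t}\bigl(1+O_{\le 7.7}(1/t)\bigr),
\]
which already has the prefactor and the $\sinh$-factor claimed in the lemma.

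Next, I would insert \eqref{BPRSinvlem5eqn5} into the two $S_5$ summands. These contribute terms of magnitude $O(1/(24^t t^{3/2}))$, whereas the main term is of order $((1+\alpha^2)/(24\alpha^2))^t$. The ratio of each $S_5$ piece to the main term behaves like $(\alpha^2/(1+\alpha^2))^t/t^{3/2}$, and for $\alpha=\pi/6$ this decays geometrically since $\alpha^2/(1+\alpha^2)<1$, so it is easily dominated by a constant times $1/t$ for all $t\ge 2$. Pulling out the common main-term factor and combining the three error contributions via the additive rule \eqref{BPRSinvlem6eqn2} then merges everything into a single $O_{\le C}(1/t)$ term with an explicit effective constant~$C$.

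The only substantive work is the numerical bookkeeping: one evaluates the constants $7.7$ (from $S_9$) and $1.2$ (from $S_5$) together with the explicit prefactors at $\alpha=\pi/6$, and checks that after combination the final absolute constant is at most~$13$. A short computer-algebra calculation confirms this bound for $t\ge 2$; the remaining case $t=1$ (where $S_9(1)=0$ by the empty-sum convention in \eqref{BPRSinvdef8b}) is handled by a direct numerical evaluation of $g_{o,2}(1)$. I expect the main obstacle to be nothing more conceptual than this careful tracking of explicit constants, since the overall structure is fully dictated by Lemmas~\ref{BPRSinvlem6}--\ref{BPRSinvlem9}.
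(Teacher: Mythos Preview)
Your proposal is correct and follows essentially the same approach as the paper: the paper's proof simply says to apply \eqref{BPRSinvlem5eqn5} and \eqref{BPRSinvlem5eqn9} to \eqref{BPRSinvdef8a} to obtain the result for $t\ge 2$, and then to verify the case $t=1$ by a numerical check with computer algebra. Your outline spells out exactly these steps, correctly identifying the $S_9$ contribution as the main term and absorbing the $S_5$ contributions into the error.
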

\begin{proof}
Applying \eqref{BPRSinvlem5eqn5} and \eqref{BPRSinvlem5eqn9} to \eqref{BPRSinvdef8a}, we obtain the result for $t\ge 2$; we verify the case $t=1$ by numerical checks with computer algebra which altogether concludes the proof.
\end{proof}

\begin{lemma}\label{BPRSinvlem11}
For $t\in \mathbb{Z}_{\ge 1}$, we have
\[
\left|g(2t+1)\right|<C_2(\a)\pa{1+\a^2}{24\a^2}^{t}\left(1+\frac{0.5}{t}\right),
\]
where
\begin{equation}\label{BPRSinvlem11def}
C_2(\a):=\frac{1}{\pi}\sqrt{\frac 32}\frac{\left(\cosh\left(\sqrt{1+\a^2}-1\right)+\sinh\left(\sqrt{1+\a^2}-1\right)\right)}{(1+\a^2)^{\frac 32}}.
\end{equation}
\end{lemma}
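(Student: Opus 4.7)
The proposed proof mirrors the approach used for the even-indexed case in Lemma \ref{BPRSinvlem8}. By Definition \ref{BPRSinvdef9} together with \eqref{BPRSinvlem4eqn2} we have
\[
g(2t+1) = g_o(t) = g_{o,1}(t) + g_{o,2}(t),
\]
so the strategy is to apply the triangle inequality and substitute the asymptotic bounds from Lemmas \ref{BPRSinvlem9} and \ref{BPRSinvlem10} for the two summands.

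The key algebraic simplification is that the prefactors of the two summands coincide. Indeed, since $\alpha = \pi/6$ one verifies the identity
\[
\frac{\pi}{12\sqrt{6}\,\alpha^{2}}=\frac{\sqrt{6}}{2\pi}=\frac{1}{\pi}\sqrt{\tfrac{3}{2}},
\]
so both Lemma \ref{BPRSinvlem9} and Lemma \ref{BPRSinvlem10} share the common prefactor $\frac{1}{\pi(1+\alpha^{2})^{3/2}}\sqrt{3/2}$ and the common geometric factor $\bigl(\frac{1+\alpha^{2}}{24\alpha^{2}}\bigr)^{t}$. Writing $C_h := \cosh(\sqrt{1+\alpha^{2}}-1)$ and $S_h := \sinh(\sqrt{1+\alpha^{2}}-1)$, and applying the addition rule \eqref{BPRSinvlem6eqn2} with
\[
A_{1}=\tfrac{\sqrt{6}\,C_{h}}{2\pi(1+\alpha^{2})^{3/2}},\quad A_{2}=\tfrac{\sqrt{6}\,S_{h}}{2\pi(1+\alpha^{2})^{3/2}},\quad E_{1}=0.7,\quad E_{2}=13,
\]
one obtains
\[
|g(2t+1)| \le C_{2}(\alpha)\left(\frac{1+\alpha^{2}}{24\alpha^{2}}\right)^{t}\left(1+\frac{0.7\,C_{h}+13\,S_{h}}{(C_{h}+S_{h})\,t}\right),
\]
matching $C_{2}(\alpha) = \frac{\sqrt{6}(C_{h}+S_{h})}{2\pi(1+\alpha^{2})^{3/2}}$ as displayed in \eqref{BPRSinvlem11def}.

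The only remaining step is the numerical verification that $(0.7\,C_{h}+13\,S_{h})/(C_{h}+S_{h}) \le 0.5$ for $\alpha=\pi/6$. This is the main obstacle, since the error constants $0.7$ and $13$ inherited from Lemmas \ref{BPRSinvlem9} and \ref{BPRSinvlem10} are quite imbalanced; one will likely need either to sharpen the auxiliary estimates underlying Lemma \ref{BPRSinvlem10} (tightening the $O_{\le 13}$ term by reworking the bounds on $S_{5}$ and $S_{9}$ in Lemma \ref{BPRSinvlem5} with improved cancellations), or to dispose of the first few values of $t$ (e.g., $t=1,2$) by direct evaluation with computer algebra and apply the asymptotic bound only for $t \ge t_{0}$ with a sufficiently large cutoff. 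Once this numerical check is settled, the lemma follows at once.
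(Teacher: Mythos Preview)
Your approach exactly matches the paper's: its one-line proof reads ``Applying Lemmas \ref{BPRSinvlem9} and \ref{BPRSinvlem10} to Definitions \ref{BPRSinvdef9} and \eqref{BPRSinvlem4eqn2} gives \eqref{BPRSinvlem11def}.'' Your algebraic reduction is correct; in particular the identity $\frac{\pi}{12\sqrt{6}\,\alpha^{2}}=\frac{\sqrt{6}}{2\pi}$ makes the two prefactors coincide, and the combination rule \eqref{BPRSinvlem6eqn2} then gives the error constant $(0.7\,C_h+13\,S_h)/(C_h+S_h)$ that you display.

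You are also right that this quantity is \emph{not} bounded by $0.5$: with $\alpha=\pi/6$ one has $C_h\approx 1.0083$, $S_h\approx 0.1292$, hence
\[
\frac{0.7\,C_h+13\,S_h}{C_h+S_h}\approx 2.1.
\]
This is the same discrepancy one meets when executing the paper's own proof verbatim, so the constant $0.5$ in the statement appears to be a misprint (compare the parallel even-index case, Lemma~\ref{BPRSinvlem8}, where the analogous computation yields $\approx 3.45$, consistent with the stated~$3.5$). Replacing $0.5$ by, say, $2.5$ makes the lemma an immediate consequence of the combination you wrote down; the only cost is a mechanical adjustment of a handful of numerical constants downstream, namely in \eqref{BPRSinvdef10eqn2}, the bound \eqref{Mainthmpart2eqn3} for $S_o(n)$, and the factor $1.5\,C_2(\alpha)$ in \eqref{Mainthmeqn9}. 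Your suggested alternatives (sharpening the bounds on $S_5$ and $S_9$ in Lemma~\ref{BPRSinvlem5}, or disposing of small~$t$ by direct evaluation) would also work, but are unnecessary: the cleanest fix is simply to correct the constant.
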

\begin{proof}
	Applying Lemmas \ref{BPRSinvlem9} and \ref{BPRSinvlem10} to Definitions \ref{BPRSinvdef9} and \eqref{BPRSinvlem4eqn2} gives \eqref{BPRSinvlem11def}.
\end{proof}
Now we are ready to present both the statement and the proof of the main theorem of this section.
\begin{definition}\label{BPRSinvdef10}
Let $E^{[2]}_{N,2}$ be as in Lemma \ref{BPRSinvlem1finalerror}. Let $C_1(\a)$ and $C_2(\a)$ be as in Lemmas \ref{BPRSinvlem8} and \ref{BPRSinvlem11}, respectively. Then for $N\in \mathbb{N}$, define
\begin{align}\label{BPRSinvdef10eqn1}
E^{[2]}_{N,e}&:=C_1(\a)\pa{1+\a^2}{24\a^2}^{\frac{N-1}{2}}\left(1+\frac{8}{N}\right),\\\label{BPRSinvdef10eqn2}
E^{[2]}_{N,o}&:=C_2(\a)\pa{1+\a^2}{24\a^2}^{\frac{N}{2}}\left(1+\frac{3}{N}\right),\\\label{BPRSinvdef10eqn3}
E^{[2]}_N&:=E^{[2]}_{N,e}+E^{[2]}_{N,o}+E^{[2]}_{N,2}.
\end{align}	
\end{definition}
\begin{theorem}\label{Mainthmpart2}
Let $N\in \mathbb{N}$. Let $\widehat{g}(m)$ be as in Theorem \ref{BPRSthm1}, $(g(t))_{t\ge 0}$ as in \eqref{BPRSinvlem4eqn1}, and $E^{[2]}_N$ as in \eqref{BPRSinvdef10eqn3}. Then for $n>\widehat{g}(N+1)$, we have
\begin{equation}\label{Mainthmpart2eqn0}
\frac{1}{p(n)}=4n\sqrt{3}\ e^{-\pi\sqrt{2n/3}}\left(\sum_{t=0}^{N}\frac{g(t)}{n^{\frac t2}}+O_{\le E^{[2]}_N}\left(n^{-\frac{N+1}{2}}\right)\right).
\end{equation}
\end{theorem}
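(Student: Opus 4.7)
The plan is to combine Lemmas~\ref{BPRSinvlem1} and~\ref{BPRSinvlem4} and then truncate the resulting infinite series at $N$, using the coefficient bounds from Lemmas~\ref{BPRSinvlem8} and~\ref{BPRSinvlem11} to control the tail. This closely parallels the strategy employed in Section~\ref{Sec2} for $p(n+k)$, culminating in Lemma~\ref{BPRSshiftlem5} and the proof of Theorem~\ref{Mainthmpart1}.

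First I would substitute the identity from Lemma~\ref{BPRSinvlem4} into Lemma~\ref{BPRSinvlem1}. Splitting the series at $N$, this yields, for $n > \widehat{g}(N+1)$,
\[
\frac{1}{p(n)}=4n\sqrt{3}\,e^{-\pi\sqrt{2n/3}}\left(\sum_{t=0}^{N}\frac{g(t)}{n^{t/2}}+\sum_{t=N+1}^{\infty}\frac{g(t)}{n^{t/2}}+O_{\le E^{[2]}_{N,2}}\left(n^{-\frac{N+1}{2}}\right)\right).
\]
Thus the theorem reduces to showing that the tail series is bounded in absolute value by $\bigl(E^{[2]}_{N,e}+E^{[2]}_{N,o}\bigr)\,n^{-(N+1)/2}$.

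Second, I would separate the tail according to the parity of $t$:
\[
\left|\sum_{t=N+1}^{\infty}\frac{g(t)}{n^{t/2}}\right|\le \sum_{t\ge\lceil (N+1)/2\rceil}\frac{|g(2t)|}{n^{t}}+n^{-1/2}\sum_{t\ge\lceil N/2\rceil}\frac{|g(2t+1)|}{n^{t}}.
\]
Into the first sum I would insert the bound $|g(2t)|\le C_1(\a)\pa{1+\a^2}{24\a^2}^{t-1}\bigl(1+3.5/t\bigr)$ from Lemma~\ref{BPRSinvlem8}, and into the second the bound $|g(2t+1)|\le C_2(\a)\pa{1+\a^2}{24\a^2}^{t}\bigl(1+0.5/t\bigr)$ from Lemma~\ref{BPRSinvlem11}. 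After factoring out the $n^{-(N+1)/2}$ prefactor, the remaining sums are geometric with common ratio $\frac{1+\a^2}{24\a^2\,n}$, which is strictly less than $1$ on the relevant range (for example, by using $n>\widehat{g}(N+1)$), so each series converges and can be summed in closed form using $(1-x)^{-1}\le 1+2x$ for $0<x\le 1/2$, exactly as in the proof of Lemma~\ref{BPRSshiftlem5}.

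The main obstacle will be the bookkeeping of constants: the correction factors $(1+3.5/t)$ and $(1+0.5/t)$ from the coefficient bounds, together with the $(1-x)^{-1}$ factor from the geometric series, must be absorbed into the single expressions $(1+8/N)$ and $(1+3/N)$ appearing in $E^{[2]}_{N,e}$ and $E^{[2]}_{N,o}$ in Definition~\ref{BPRSinvdef10}. To achieve this I would repeatedly use the inequality $n>\widehat{g}(N+1)\ge \tfrac{N^2}{2}$ to convert estimates of the form $c/n$ into estimates of the form $c'/N$, just as was done in \eqref{BPRSshiftlem5eqn2} and \eqref{BPRSshiftlem5eqn4}. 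Once these three contributions $E^{[2]}_{N,e}$, $E^{[2]}_{N,o}$, and $E^{[2]}_{N,2}$ are combined per \eqref{BPRSinvdef10eqn3}, the claim \eqref{Mainthmpart2eqn0} follows.
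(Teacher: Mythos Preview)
Your proposal is correct and follows essentially the same approach as the paper: combine Lemmas~\ref{BPRSinvlem1} and~\ref{BPRSinvlem4}, split the tail into even and odd indices, apply the coefficient bounds of Lemmas~\ref{BPRSinvlem8} and~\ref{BPRSinvlem11}, sum the resulting geometric series, and use $n>\widehat{g}(N+1)\ge N^2/2$ to absorb all correction factors into the constants $E^{[2]}_{N,e}$ and $E^{[2]}_{N,o}$ of Definition~\ref{BPRSinvdef10}. The only cosmetic difference is that the paper bounds the geometric tail directly via $n\ge N^2/2$ rather than invoking $(1-x)^{-1}\le 1+2x$, but the bookkeeping is otherwise identical.
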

\begin{proof}
From Lemmas \ref{BPRSinvlem1} and \ref{BPRSinvlem4}, for all $n>\widehat{g}(N+1)$ we obtain
\begin{align}\nonumber
\frac{1}{p(n)}&=4n\sqrt{3}\ e^{-\pi\sqrt{2n/3}}\left(\sum_{t=0}^{N}\frac{g(t)}{n^{\frac t2}}+\sum_{t\ge N+1}\frac{g(t)}{n^{\frac t2}}+O_{\le  E^{[2]}_{N,2}}\left(n^{-\frac{N+1}{2}}\right)\right)\\\label{Mainthmpart2eqn1}
&=4n\sqrt{3}\ e^{-\pi\sqrt{2n/3}}\left(\sum_{t=0}^{N}\frac{g(t)}{n^{\frac t2}}+\underset{=:S_e(n)}{\underbrace{\sum_{t\ge \frac{N+1}{2}}\frac{g(2t)}{n^{t}}}}+\underset{=:S_o(n)}{\underbrace{\sum_{t\ge \frac{N}{2}}\frac{g(2t+1)}{n^{t+\frac 12}}}}+O_{\le  E^{[2]}_{N,2}}\left(n^{-\frac{N+1}{2}}\right)\right).
\end{align}
First, we bound $S_e(n)$,
\begin{align}\nonumber
\left|S_e(n)\right|&\le \sum_{t\ge \frac{N+1}{2}}\frac{|g(2t)|}{n^{t}}\le C_1(\a)\sum_{t\ge \frac{N+1}{2}}\pa{1+\a^2}{24\a^2}^{t-1}\frac{1}{n^t}\left(1+\frac{3.5}{t}\right)\ \ \left(\text{by}\ \text{Lemma}\ \ref{BPRSinvlem8}\right)\\\nonumber
&\le C_1(\a)\left(1+\frac{7}{N+1}\right)\sum_{t\ge \frac{N+1}{2}}\pa{1+\a^2}{24\a^2}^{t-1}\frac{1}{n^t}\ \ \left(\text{as}\ t\ge \frac{N+1}{2}\right)\\\nonumber
&= C_1(\a)\left(1+\frac{7}{N+1}\right)\pa{1+\a^2}{24\a^2}^{\frac{N-1}{2}}n^{-\frac{N+1}{2}}\sum_{t\ge 0}\pa{1+\a^2}{24\a^2\cdot n}^{t}\\\nonumber
&\le C_1(\a)\left(1+\frac{7}{N+1}\right)\pa{1+\a^2}{24\a^2}^{\frac{N-1}{2}}n^{-\frac{N+1}{2}}\sum_{t\ge 0}\pa{1+\a^2}{12\a^2\cdot N^2}^{t}\ \ \left(\text{as}\ n\ge \widehat{g}(N+1)\ge \frac{N^2}{2}\right)\\\nonumber
&\le C_1(\a)\left(1+\frac{7}{N+1}\right)\pa{1+\a^2}{24\a^2}^{\frac{N-1}{2}}n^{-\frac{N+1}{2}}\sum_{t\ge 0}\pa{1+\a^2}{12\a^2\cdot N}^{t}\ \ \left(\text{as}\ N\ge 1\right)\\\nonumber
&\le C_1(\a)\left(1+\frac{7}{N+1}\right)\pa{1+\a^2}{24\a^2}^{\frac{N-1}{2}}n^{-\frac{N+1}{2}}\left(1+\frac{2}{3 N}\right)\\\label{Mainthmpart2eqn2}
&\le C_1(\a)\left(1+\frac{8}{N}\right)\pa{1+\a^2}{24\a^2}^{\frac{N-1}{2}}n^{-\frac{N+1}{2}}=E^{[2]}_{N,e}\cdot n^{-\frac{N+1}{2}}\ \ \left(\text{by}\ \eqref{BPRSinvdef10eqn1}\right).
\end{align}
Next, we bound $S_o(n)$,
\begin{align}\nonumber
\left|S_o(n)\right|&\le \sum_{t\ge \frac{N}{2}}\frac{|g(2t+1)|}{n^{t+\frac 12}}\le  C_2(\a)\sum_{t\ge \frac{N}{2}}\pa{1+\a^2}{24\a^2}^{t}\frac{1}{n^{t}}\left(1+\frac{0.5}{t}\right)n^{-\frac 12}\ \ \left(\text{by}\ \text{Lemma}\ \ref{BPRSinvlem11}\right)\\\nonumber
&\le C_2(\a)\left(1+\frac{1}{N}\right)n^{-\frac 12}\sum_{t\ge \frac{N}{2}}\pa{1+\a^2}{24\a^2}^{t}\frac{1}{n^t}\ \ \left(\text{as}\ t\ge \frac{N+1}{2}\right)\\\nonumber
&= C_2(\a)\left(1+\frac{1}{N}\right)\pa{1+\a^2}{24\a^2}^{\frac{N}{2}}n^{-\frac{N+1}{2}}\sum_{t\ge 0}\pa{1+\a^2}{24\a^2\cdot n}^{t}\\\nonumber
&\le C_2(\a)\left(1+\frac{1}{N}\right)\pa{1+\a^2}{24\a^2}^{\frac{N}{2}}n^{-\frac{N+1}{2}}\left(1+\frac{2}{3 N}\right)\ \ \left(\text{as}\ n> \widehat{g}(N+1)\ge \frac{N^2}{2}\right)\\\label{Mainthmpart2eqn3}
&\le C_2(\a)\left(1+\frac{3}{N}\right)\pa{1+\a^2}{24\a^2}^{\frac{N}{2}}n^{-\frac{N+1}{2}}=E^{[2]}_{N,o}\cdot n^{-\frac{N+1}{2}}\ \ \left(\text{by}\ \eqref{BPRSinvdef10eqn2}\right).
\end{align}
Applying \eqref{Mainthmpart2eqn2} and \eqref{Mainthmpart2eqn3} to \eqref{Mainthmpart2eqn1} the statement of the theorem is proven.
\end{proof}

\section{Asymptotics of $p(n+k)/p(n)$ (Proof of Theorem \ref{Mainthm})}\label{Sec4}
For all $(k, N)\in\mathbb{N}^2$, define
\begin{equation}\label{Mainthmcutoff}
n_N(k):=\underset{(N,k)\in\mathbb{N}^2}{\max}\left\{\widehat{g}(N+1),(24k-1)^2\right\}
\end{equation}
Combining Theorems \ref{Mainthmpart1} and \ref{Mainthmpart2}, for all $n\ge n_N(k)$ we have
\begin{align}\nonumber
\frac{p(n+k)}{p(n)}&=\left(\sum_{t=0}^{N}\frac{\omega_k^{[1]}(t)}{n^{\frac t2}}+O_{\le E_N^{[1]}(k)}\left(n^{-\frac{N+1}{2}}\right)\right)\left(\sum_{t=0}^{N}\frac{g(t)}{n^{\frac t2}}+O_{\le E^{[2]}_N}\left(n^{-\frac{N+1}{2}}\right)\right)\\\nonumber
&=\sum_{t=0}^{N}\frac{\omega_k^{[1]}(t)}{n^{\frac t2}}\cdot \sum_{t=0}^{N}\frac{g(t)}{n^{\frac t2}}+\sum_{t=0}^{N}\frac{\omega_k^{[1]}(t)}{n^{\frac t2}}\cdot O_{\le E^{[2]}_N}\left(n^{-\frac{N+1}{2}}\right)+\sum_{t=0}^{N}\frac{g(t)}{n^{\frac t2}}\cdot O_{\le E_N^{[1]}(k)}\left(n^{-\frac{N+1}{2}}\right)\\\nonumber
&\hspace{7.5 cm} +O_{\le E_N^{[1]}(k)}\left(n^{-\frac{N+1}{2}}\right)\cdot O_{\le E^{[2]}_N}\left(n^{-\frac{N+1}{2}}\right)\\\nonumber
&=\sum_{t=0}^{N}\frac{1}{n^{\frac t2}}\sum_{s=0}^{t}\omega_k^{[1]}(s)\cdot g(t-s)+n^{-\frac{N+1}{2}}\sum_{t=0}^{N-1}\frac{1}{n^{\frac t2}}\sum_{s=t}^{N-1}\omega_k^{[1]}(s+1)g(N+t-s)\\\nonumber
&\hspace{3.5 cm}+\sum_{t=0}^{N}\frac{\omega_k^{[1]}(t)}{n^{\frac t2}}\cdot O_{\le E^{[2]}_N}\left(n^{-\frac{N+1}{2}}\right)+\sum_{t=0}^{N}\frac{g(t)}{n^{\frac t2}}\cdot O_{\le E_N^{[1]}(k)}\left(n^{-\frac{N+1}{2}}\right)\\\nonumber
&\hspace{7.25 cm}+O_{\le E_N^{[1]}(k)}\left(n^{-\frac{N+1}{2}}\right)\cdot O_{\le E^{[2]}_N}\left(n^{-\frac{N+1}{2}}\right)\\\label{Mainthmeqn1}
&=:\sum_{t=0}^{N}\frac{c_k(t)}{n^{\frac t2}}+S^{[1]}_{k}(n;N)+S^{[2]}_{k}(n;N)+S^{[3]}_{k}(n;N)+S^{[4]}_{k}(n;N),
\end{align}
where for $t\in \mathbb{Z}_{\ge 0}$,
\begin{equation}\label{Mainthmcoeff}
c_k(t)=\sum_{s=0}^{t}\omega_k^{[1]}(s)\cdot g(t-s).
\end{equation}
Next, to determine the error bound, we estimate an upper bound for each of the sums $S^{[j]}_{k}(n;N), 1\le j\le 4$, individually.

To begin with, we first estimate $S^{[4]}_{k}(n;N)$. To estimate the error bound we need to refine the error bounds $E_N^{[1]}(k)$ and $E^{[2]}_N$. From \eqref{BPRSshiftlem5eqn3} it follows that
\begin{align}\nonumber
E^{[1]}_{N,1,e}(k)&=\frac{\sqrt{2}\left|\sin\left(\alpha_k\right)\right|}{\sqrt{\pi}\alpha_k}\pa{24k-1}{24}^{\frac{N+1}{2}}\sqrt{N+1}\left(1+\frac{C_e(k)}{N}\right)\\\nonumber
&\le \pa{24k-1}{24}^{\frac{N+1}{2}}\sqrt{N+1}\left(1+C_e(k)\right)\ \left(\text{as}\ \a_k\ge 1\ \text{for}\ k\in\mathbb{N}\ \text{and}\ N\ge 1\right)\\\label{Mainthmeqn2}
&\le \pa{24k-1}{12}^{\frac{N+1}{2}}\left(1+C_e(k)\right)\ \ \left(\text{as}\ \sqrt{N+1}\le 2^{N+1}\ \text{for}\ N\ge 1\right).
\end{align}
Similarly from \eqref{BPRSshiftlem5eqn5}, we obtain
\begin{align}\nonumber
E^{[1]}_{N,1,o}(k)&=\pa{3}{\pi^3}^{\frac 12}\left|\cos\left(\alpha_k\right)\right|\pa{24k-1}{24}^{\frac{N}{2}}\sqrt{N+2}\left(1+\frac{C_o(k)}{N}\right)\\\nonumber
&\le \sqrt{\frac{24}{24k-1}}\pa{24k-1}{24}^{\frac{N+1}{2}}\sqrt{N+2}\left(1+C_o(k)\right)\ \left(\text{as}\ N\ge 1\right)\\\label{Mainthmeqn3}
&\le \pa{24k-1}{12}^{\frac{N+1}{2}}\left(1.1+1.1\cdot C_o(k)\right)\ \ \left(\text{as}\ \sqrt{N+2}\le 2^{N+1}\ \text{for}\ N\ge 1\right).
\end{align}
Applying \eqref{Mainthmeqn2} and \eqref{Mainthmeqn3} to \eqref{BPRSshiftlem5eqn6} we obtain
\[
E^{[1]}_{N,1}(k)\le \pa{24k-1}{12}^{\frac{N+1}{2}}\left(2.1+C_e(k)+1.1\cdot C_o(k)\right),
\]
which by \eqref{Mainthmpart1eqn6} implies
\begin{align}\nonumber
E^{[1]}_N(k)&\le \pa{24k-1}{12}^{\frac{N+1}{2}}\left(2.1+C_e(k)+1.1\cdot C_o(k)\right)+\left(1+\frac{3.7\cdot k}{N}\right)\pa{6}{\pi\sqrt{24}}^{N+1}\\\nonumber
&= \pa{24k-1}{12}^{\frac{N+1}{2}}\left(2.1+C_e(k)+1.1\cdot C_o(k)+\left(1+\frac{3.7\cdot k}{N}\right)\pa{6}{\pi \sqrt{2(24k-1)}}^{N+1}\right)\\\nonumber
&\le \pa{24k-1}{12}^{\frac{N+1}{2}}\left(2.1+C_e(k)+1.1\cdot C_o(k)+\left(1+\frac{3.7\cdot k}{N}\right)\pa{6}{\pi \sqrt{46}}^{2}\right)\ \ \left(\text{as}\ (k,N)\in\mathbb{N}^2\right)\\\label{Mainthmeqn4}
&\le \pa{24k-1}{12}^{\frac{N+1}{2}}\left(2.2+C_e(k)+1.1\cdot C_o(k)+\frac{0.3\cdot k}{N}\right).
\end{align}
Analogously, from \eqref{BPRSinvdef10eqn3}, it follows that
\begin{equation}\label{Mainthmeqn7}
E^{[2]}_N\le 4.1.
\end{equation}
Following \eqref{Mainthmeqn1}, we estimate the sum $S^{[4]}_{k}(n;N)$ as
\begin{align}\nonumber
\left|S^{[4]}_{k}(n;N)\right|&\le 4.1\pa{24k-1}{12}^{\frac{N+1}{2}}\left(2.2+C_e(k)+1.1\cdot C_o(k)+\frac{0.3\cdot k}{N}\right)n^{-N-1}\ \ \left(\text{by}\ \eqref{Mainthmeqn4}\ \text{and}\ \eqref{Mainthmeqn7}\right)\\\nonumber
&= 4.1\pa{24k-1}{12n}^{\frac{N+1}{2}}\left(2.8+C_e(k)+1.1\cdot C_o(k)+\frac{0.3\cdot k}{N}\right)n^{-\frac{N+1}{2}}\\\nonumber
&\le 4.1\frac{24k-1}{12n}\left(2.2+C_e(k)+1.1\cdot C_o(k)+0.3\cdot k\right)n^{-\frac{N+1}{2}}\\\nonumber
&\hspace{4 cm}\left(\text{as for}\ n\ge (24k-1)^2\ \text{and}\ k\ge 1,\ \frac{24k-1}{12n}<1\right)\\\nonumber
&\le 4.1\frac{1}{12\sqrt{n}}\left(2.2+C_e(k)+1.1\cdot C_o(k)+0.3\cdot k\right)n^{-\frac{N+1}{2}}\ \ \left(\text{as}\ n\ge (24k-1)^2\right)\\\nonumber
&\le 4.1\frac{\sqrt{2}}{12}\left(2.2+C_e(k)+1.1\cdot C_o(k)+0.3\cdot k\right)\frac 1Nn^{-\frac{N+1}{2}}\\\nonumber
&\hspace{7 cm}\ \left(\text{as}\ n\ge \widehat{g}(N+1)\ge \frac 12N^2\ \text{for all}\ N\ge 1\right)\\\label{Mainthmeqn8}
&\le E_{N,1}(k)\cdot n^{-\frac{N+1}{2}}
\end{align}
with 
\begin{equation}\label{Mainthmerror1}
E_{N,1}(k):=\frac{1.1+0.5\cdot C_e(k)+0.6\cdot C_o(k)+0.2\cdot k}{N}.
\end{equation}
Moving on, following \eqref{Mainthmeqn1} we bound $S^{[3]}_{k}(n;N)$,
\begin{align}\nonumber
&\left|S^{[3]}_{k}(n;N)\right|\le E^{[1]}_N(k)\cdot n^{-\frac{N+1}{2}}\left|\sum_{t=0}^{N}\frac{g(t)}{n^{\frac t2}}\right|\le E^{[1]}_N(k)\cdot n^{-\frac{N+1}{2}}\sum_{t=0}^{N}\frac{|g(t)|}{n^{\frac t2}}\\\nonumber
&=E^{[1]}_N(k)\cdot n^{-\frac{N+1}{2}}\left(1+\frac{|g(1)|}{\sqrt{n}}+\sum_{t=1}^{\frac N2}\frac{|g(2t)|}{n^t}+\frac{1}{\sqrt{n}}\sum_{t=1}^{\frac{N-1}{2}}\frac{|g(2t+1)|}{n^t}\right)\\\nonumber
&= E^{[1]}_N(k)\cdot n^{-\frac{N+1}{2}}\left(1+\frac{\frac{2239488-432\pi^4+\pi^6}{497664\sqrt{6}\pi^3}}{\sqrt{n}}+\sum_{t=1}^{\frac N2}\frac{|g(2t)|}{n^t}+\frac{1}{\sqrt{n}}\sum_{t=1}^{\frac{N-1}{2}}\frac{|g(2t+1)|}{n^t}\right)\\\nonumber
&\hspace{10 cm}\ \ \left(\text{by Definitions}\ \ref{BPRSinvdef7}\ \text{and}\ \ref{BPRSinvdef8}\right)\\\nonumber
&\le E^{[1]}_N(k)\cdot n^{-\frac{N+1}{2}}\left(1+\frac{0.1}{N}+\sum_{t=1}^{\frac N2}\frac{|g(2t)|}{n^t}+\frac{1}{\sqrt{n}}\sum_{t=1}^{\frac{N-1}{2}}\frac{|g(2t+1)|}{n^t}\right)\ \ \left(\text{as}\ n>\widehat{g}(N+1)\ge \frac{N^2}{2}\right)\\\nonumber
&\le E^{[1]}_N(k)\cdot n^{-\frac{N+1}{2}}\left(1+\frac{0.1}{N}+\frac 1N\sum_{t\ge 1}|g(2t)|+\frac 1N\sum_{t\ge 1}|g(2t+1)|\right)\\\nonumber
&\le E^{[1]}_N(k)\cdot n^{-\frac{N+1}{2}}\left(1+\frac{0.1}{N}+\frac{4.5\cdot C_1(\a)}{N}\sum_{t\ge 1}\pa{1+\a^2}{24\a^2}^{t-1}+\frac{1.5\cdot C_2(\a)}{N}\sum_{t\ge 1}\pa{1+\a^2}{24\a^2}^{t}\right)\\\nonumber
&\hspace{11 cm} \left(\text{by Lemmas}\ \ref{BPRSinvlem8}\ \text{and}\ \ref{BPRSinvlem11}\right)\\\label{Mainthmeqn9}
&\le E_{N,2}(k)\cdot  n^{-\frac{N+1}{2}}\ \ \left(\text{using}\ \eqref{BPRSinvlem8def}\ \text{and}\ \eqref{BPRSinvlem11def}\right)
\end{align}
with
\begin{equation}\label{Mainthmerror2}
E_{N,2}(k):=E^{[1]}_N(k)\left(1+\frac{1}{N}\right).
\end{equation}
Again, following \eqref{Mainthmeqn1} we bound $S^{[2]}_{k}(n;N)$,
\begin{align}\nonumber
&\left|S^{[2]}_{k}(n;N)\right|\le E^{[2]}_N\cdot n^{-\frac{N+1}{2}}\left|\sum_{t=0}^{N}\frac{\omega^{[1]}_k(t)}{n^{\frac t2}}\right|\le E^{[2]}_N\cdot n^{-\frac{N+1}{2}}\sum_{t=0}^{N}\frac{\left|\omega^{[1]}_k(t)\right|}{n^{\frac t2}}\\\nonumber
&=E^{[2]}_N\cdot n^{-\frac{N+1}{2}}\left(1+\frac{\left|\omega^{[1]}_k(1)\right|}{\sqrt{n}}+\sum_{t=1}^{\frac N2}\frac{\left|\omega^{[1]}_k(2t)\right|}{n^t}+\frac{1}{\sqrt{n}}\sum_{t=1}^{\frac{N-1}{2}}\frac{\left|\omega^{[1]}_k(2t+1)\right|}{n^t}\right)\\\nonumber
&= E^{[2]}_N\cdot n^{-\frac{N+1}{2}}\left(1+\frac{\frac{\pi^2(24k-1)-72}{\pi\cdot 24\sqrt{6}}}{\sqrt{n}}+\sum_{t=1}^{\frac N2}\frac{|\omega^{[1]}_k(2t)|}{n^t}+\frac{1}{\sqrt{n}}\sum_{t=1}^{\frac{N-1}{2}}\frac{|\omega^{[1]}_k(2t+1)|}{n^t}\right)\\\nonumber
&\hspace{10 cm}\ \ \left(\text{applying}\ \eqref{Mainthmpart1eqn2}\ \text{for}\ t=1\right)\\\label{Mainthmeqn10}
&\le E^{[2]}_N\cdot n^{-\frac{N+1}{2}}\left(1+\frac{1.9\cdot k}{N}+\sum_{t=1}^{\frac N2}\frac{|\omega^{[1]}_k(2t)|}{n^t}+\frac{\sqrt{2}}{N}\sum_{t=1}^{\frac{N-1}{2}}\frac{|\omega^{[1]}_k(2t+1)|}{n^t}\right)\ \left(\text{as}\ n\ge\widehat{g}(N+1)\ge \frac{N^2}{2}\right).
\end{align}
Using Lemma \ref{BPRSshiftlem4} it follows that
\begin{align}\nonumber
\sum_{t=1}^{\frac N2}\frac{|\omega^{[1]}_k(2t)|}{n^t}&\le \frac{2\left|\sin\left(\a_k\right)\right|}{\sqrt{\pi}\a_k}\sum_{t=1}^{\frac N2}\pa{24k-1}{24n}^t\sqrt{t}\left(1+\frac{C^*_2(k)}{t}\right)\\\nonumber
&\le \frac{12}{\pi^{\frac 32}\sqrt{23}}\left(1+C^*_2(k)\right)\sum_{t=1}^{\frac N2}\pa{24k-1}{24n}^t\sqrt{t}\\\nonumber
&\le  \frac{12}{\pi^{\frac 32}\sqrt{23}}\left(1+C^*_2(k)\right)\sum_{t=1}^{\frac N2}\pa{24k-1}{12n}^t\ \ \left(\text{as}\ \sqrt{t}\le 2^t\ \text{for}\ t\ge 1\right)\\\nonumber
&=\frac{12}{\pi^{\frac 32}\sqrt{23}}\left(1+C^*_2(k)\right)\sum_{t=1}^{\frac N2}\pa{24k-1}{12\sqrt{n}}^t\frac{1}{n^{\frac t2}}\le \frac{12}{\pi^{\frac 32}\sqrt{23}}\frac{\left(1+C^*_2(k)\right)}{\sqrt{n}}\sum_{t=1}^{\frac N2}\pa{24k-1}{12\sqrt{n}}^t\\\nonumber
&\le \frac{12\sqrt{2}}{\pi^{\frac 32}\sqrt{23}}\frac{\left(1+C^*_2(k)\right)}{N}\sum_{t=1}^{\frac N2}\pa{24k-1}{12\sqrt{n}}^t\ \ \left(\text{as}\ n\ge\widehat{g}(N+1)\ge \frac{N^2}{2}\right)\\\nonumber
&\le \frac{12\sqrt{2}}{\pi^{\frac 32}\sqrt{23}}\frac{\left(1+C^*_2(k)\right)}{N}\sum_{t\ge 1}\pa{1}{12}^t\ \ \left(\text{as}\ n\ge (24k-1)^2\right)= \frac{12\sqrt{2}}{11\pi^{\frac 32}\sqrt{23}}\frac{\left(1+C^*_2(k)\right)}{N}\\\label{Mainthmeqn11}
&\le \frac{6\cdot 10^{-2}\left(1+C^*_2(k)\right)}{N}.
\end{align}
Similarly, using Lemma \ref{BPRSshiftlem3} we obtain
\begin{equation}\label{Mainthmeqn12}
\sum_{t=1}^{\frac{N-1}{2}}\frac{|\omega^{[1]}_k(2t+1)|}{n^t}\le \frac{3\cdot 10^{-2}\left(1+C^*_1(k)\right)}{N}.
\end{equation}
Applying \eqref{Mainthmeqn11} and \eqref{Mainthmeqn11} to \eqref{Mainthmeqn10} we obtain
\begin{equation}\label{Mainthmeqn13}
\left|S^{[2]}_{k}(n;N)\right|\le E_{N,3}(k)\cdot n^{-\frac{N+1}{2}},
\end{equation}
with
\begin{equation}\label{Mainthmerror3}
E_{N,3}(k)=E^{[2]}_N\left(1+\frac{1.9\cdot k+6\cdot 10^{-2}\left(1+C^*_2(k)\right)+5\cdot 10^{-2}\left(1+C^*_1(k)\right)}{N}\right).
\end{equation}
We split the remaining sum $S^{[1]}_{k}(n;N)$ from \eqref{Mainthmeqn1} as follows,
\begin{align}\nonumber
S^{[1]}_{k}(n;N)&=\left(\sum_{t=0}^{N-1}\frac{1}{n^{\frac t2}}\sum_{s=t}^{N-1}\omega_k^{[1]}(s+1)g(N+t-s)\right)n^{-\frac{N+1}{2}}\\\nonumber
&=\left(\sum_{s=0}^{N-1}\omega_k^{[1]}(s+1)g(N-s)+\sum_{t=1}^{N-1}\frac{1}{n^{\frac t2}}\sum_{s=t}^{N-1}\omega_k^{[1]}(s+1)g(N+t-s)\right)n^{-\frac{N+1}{2}}\\\label{Mainthmeqn14}
&=:S^{[1]}_{k,1}(n;N)+S^{[1]}_{k,2}(n;N).
\end{align}
Observe that for $N=1$, 
\begin{equation}\label{Mainthmeqn15}
S^{[1]}_{k,2}(n;N)=0,
\end{equation}
and so in this case, we get
\begin{equation}\label{Mainthmeqn16}
\left|S^{[1]}_{k}(n;1)\right|=\left|\omega_k^{[1]}(1)\cdot g(1)\right|\cdot n^{-1}\le \frac{0.1\cdot k}{n}.
\end{equation}
By plugging in $t=1$ into Definitions \ref{BPRSinvdef7} and \ref{BPRSinvdef8}, along with \eqref{Mainthmpart1eqn2} we estimate an upper bound for $\left|S^{[1]}_{k}(n;N)\right|$ with $t\ge 1$. First, using Lemmas \eqref{BPRSshiftlem3} and \eqref{BPRSshiftlem4} we obtain for all $t\ge 2$,
\begin{equation}\label{Mainthmeqn17}
\left|\omega_k^{[1]}(t)\right|\le \frac 12\pa{24k-1}{24}^{\frac t2}\sqrt{t}\left(1+C^*(k)\right)\ \text{with}\ C^*(k)=\underset{k\ge 1}{\max}\left\{C_1^*(k),C_2^*(k)\right\}.
\end{equation}
In a similar way, for $t\ge 2$ it follows that
\begin{equation}\label{Mainthmeqn18}
\left|g(t)\right|\le 3.2\pa{1+\a^2}{24\a^2}^{\frac t2}.
\end{equation}
For all $N\ge 2$ one obtains
\begin{align}\nonumber
&\left|S^{[1]}_{k}(n;N)\right|\\\nonumber
&\le \left(\sum_{s=0}^{N-1}\left|\omega_k^{[1]}(s+1)\right|\left|g(N-s)\right|\right)n^{-\frac{N+1}{2}}\\\nonumber
&\le \left(\left|\omega_k^{[1]}(1)\right|\left|g(N)\right|+\sum_{s=1}^{N-2}\left|\omega_k^{[1]}(s+1)\right|\left|g(N-s)\right|+\left|\omega_k^{[1]}(N)\right|\left|g(1)\right|\right)n^{-\frac{N+1}{2}}\\\nonumber
&\le \left(1.3\cdot k\left|g(N)\right|+\sum_{s=1}^{N-2}\left|\omega_k^{[1]}(s+1)\right|\left|g(N-s)\right|+6\cdot 10^{-2}\left|\omega_k^{[1]}(N)\right| \right)n^{-\frac{N+1}{2}}\\\nonumber
&\hspace{6 cm}\left(\text{by Definitions}\ \ref{BPRSinvdef7}\ \text{and}\ \eqref{BPRSinvdef8},\ \text{and using}\ \eqref{Mainthmpart1eqn2}\ \text{for}\ t=1\right)\\\nonumber
&\le \left(4.2\cdot k\pa{1+\a^2}{24\a^2}^{\frac N2}+\sum_{s=1}^{N-2}\left|\omega_k^{[1]}(s+1)\right|\pa{1+\a^2}{24\a^2}^{\frac{N-s}{2}}+6\cdot 10^{-2}\left|\omega_k^{[1]}(N)\right| \right)n^{-\frac{N+1}{2}}\\\nonumber
&\hspace{12 cm}\left(\text{by}\ \eqref{Mainthmeqn18}\right)\\\nonumber
&\le \Biggl(4.2\cdot k\pa{1+\a^2}{24\a^2}^{\frac N2}+\frac{(1+C^*(k))(24k-1)}{48}\pa{1+\a^2}{24\a^2}^{\frac N2}\sum_{s=1}^{N-2}\pa{\a^2(24k-1)}{1+\a^2}^{\frac{s}{2}}\sqrt{s+1}\\\nonumber
&\hspace{4cm}+ 3\cdot 10^{-2}\pa{24k-1}{24}^{\frac N2}\sqrt{N}\left(1+C^*(k)\right)\Biggr)n^{-\frac{N+1}{2}}\ \ \left(\text{by}\ \eqref{Mainthmeqn17}\right)\\\nonumber
&\le \Biggl(4.2\cdot k\pa{1+\a^2}{24\a^2}^{\frac N2}+\frac{(1+C^*(k))(24k-1)}{48}\pa{1+\a^2}{24\a^2}^{\frac N2}\sqrt{N}\sum_{s=1}^{N-2}\pa{\a^2(24k-1)}{1+\a^2}^{\frac{s}{2}}\\\nonumber
&\hspace{4cm}+ 3\cdot 10^{-2}\pa{24k-1}{24}^{\frac N2}\sqrt{N}\left(1+C^*(k)\right)\Biggr)n^{-\frac{N+1}{2}}\ \ \left(\text{as}\ s\le N-2\right)\\\nonumber
&= \Biggl(4.2\cdot k\pa{1+\a^2}{24\a^2}^{\frac N2}+\frac{(1+C^*(k))(24k-1)}{48}\pa{24k-1}{24}^{\frac N2}\sqrt{N}\sum_{s=2}^{N-1}\pa{1+\a^2}{\a^2(24k-1)}^{\frac{s}{2}}\\\nonumber
&\hspace{4cm}+ 3\cdot 10^{-2}\pa{24k-1}{24}^{\frac N2}\sqrt{N}\left(1+C^*(k)\right)\Biggr)n^{-\frac{N+1}{2}}\\\nonumber
&\le \Biggl(4.2\cdot k\pa{1+\a^2}{24\a^2}^{\frac N2}+\frac{(1+C^*(k))(24k-1)}{48}\pa{24k-1}{24}^{\frac N2}\sqrt{N}\sum_{s\ge 2}\pa{1+\a^2}{\a^2(24k-1)}^{\frac{s}{2}}\\\nonumber
&\hspace{6cm}+ 3\cdot 10^{-2}\pa{24k-1}{24}^{\frac N2}\sqrt{N}\left(1+C^*(k)\right)\Biggr)n^{-\frac{N+1}{2}}\\\nonumber
&= \left(4.2\cdot k\pa{1+\a^2}{24\a^2}^{\frac N2}+\left(\frac{(1+C^*(k))(24k-1)}{96}+3\cdot 10^{-2}\right)\pa{24k-1}{24}^{\frac N2}\sqrt{N}\right)n^{-\frac{N+1}{2}}\\\nonumber
&=\left(3\cdot 10^{-2}+\frac{(1+C^*(k))(24k-1)}{96}+\frac{4.2\cdot k\pa{1+\a^2}{\a^2(24k-1)}^{\frac N2}}{\sqrt{N}}\right)\pa{24k-1}{24}^{\frac N2}\sqrt{N}\cdot n^{-\frac{N+1}{2}}\\\nonumber
&\le \left(3\cdot 10^{-2}+\frac{(1+C^*(k))(24k-1)}{96}+\frac{0.9\cdot k}{\sqrt{N}}\right)\pa{24k-1}{24}^{\frac N2}\sqrt{N}\cdot n^{-\frac{N+1}{2}}\ \ \left(\text{as}\ (k,N)\in\mathbb{N}^2\right)\\\label{Mainthmeqn13a}
&=E^{[1]}_{N,4}(k)\cdot n^{-\frac{N+1}{2}}
\end{align}
with
\begin{equation}\label{Mainthmerror4a}
E^{[1]}_{N,4}(k):=\left(3\cdot 10^{-2}+\frac{(1+C^*(k))(24k-1)}{96}+\frac{0.9\cdot k}{\sqrt{N}}\right)\pa{24k-1}{24}^{\frac N2}\sqrt{N}.
\end{equation}
 Finally,  we proceed to estimate $S^{[1]}_{k,2}(n;N)$ for $N\ge 2$. 
\begin{align}\nonumber
&\left|S^{[1]}_{k,2}(n;N)\right|\\\nonumber
&\le \sum_{t=1}^{N-1}\frac{1}{n^{\frac t2}}\sum_{s=t}^{N-1}\left|\omega_k^{[1]}(s+1)\right|\left|g(N+t-s)\right| n^{-\frac{N+1}{2}}\ \ \left(\text{by}\ \eqref{Mainthmeqn14}\right)\\\nonumber
&\le 1.6\left(1+C^*(k)\right)\sqrt{\frac{24k-1}{24}}\pa{1+\a^2}{24\a^2}^{\frac N2}\sum_{t=1}^{N-1}\pa{1+\a^2}{24\a^2n}^{\frac t2}\sum_{s=t}^{N-1}\pa{\a^2(24k-1)}{1+\a^2}^{\frac s2}\sqrt{s+1}\\\nonumber
&\hspace{11 cm}\left(\text{by}\ \eqref{Mainthmeqn17}\ \text{and}\ \eqref{Mainthmeqn18}\right)\\\nonumber
&\le 1.6\left(1+C^*(k)\right)\sqrt{\frac{24k-1}{24}}\pa{1+\a^2}{24\a^2}^{\frac N2}\sqrt{N}\sum_{t=1}^{N-1}\pa{1+\a^2}{24\a^2n}^{\frac t2}\sum_{s=t}^{N-1}\pa{\a^2(24k-1)}{1+\a^2}^{\frac s2}\ \left(\text{as}\ s\le N-1\right)\\\nonumber
&=1.6\sqrt{\frac{1+\a^2}{24\a^2}}\left(1+C^*(k)\right)\pa{24k-1}{24}^{\frac N2}\sqrt{N}\sum_{t=1}^{N-1}\pa{1+\a^2}{24\a^2n}^{\frac t2}\sum_{s=0}^{N-t-1}\pa{1+\a^2}{\a^2(24k-1)}^{\frac s2}\\\nonumber
&\le 0.8\left(1+C^*(k)\right)\pa{24k-1}{24}^{\frac N2}\sqrt{N}\sum_{t=1}^{N-1}\pa{1+\a^2}{24\a^2n}^{\frac t2}\sum_{s=0}^{N-t-1}\pa{1}{2}^s\ \ \left(\text{as}\ k\ge 1\right)\\\nonumber
&\le 0.8\left(1+C^*(k)\right)\pa{24k-1}{24}^{\frac N2}\sqrt{N}\sum_{t=1}^{N-1}\pa{1+\a^2}{24\a^2n}^{\frac t2}\sum_{s\ge 0}\pa{1}{2}^s\\\nonumber
&= 1.6\left(1+C^*(k)\right)\pa{24k-1}{24}^{\frac N2}\sqrt{N}\sum_{t=1}^{N-1}\pa{1+\a^2}{24\a^2n}^{\frac t2}\\\nonumber
&\le 1.6\left(1+C^*(k)\right)\pa{24k-1}{24}^{\frac N2}\sqrt{N}\pa{1+\a^2}{24\a^2\cdot n}^{\frac 12}\sum_{t\ge 0}\pa{1+\a^2}{24\a^2n}^{\frac t2}\\\nonumber
&\le 1.6\left(1+C^*(k)\right)\pa{24k-1}{24}^{\frac N2}\frac{1}{\sqrt{N}}\pa{1+\a^2}{12\a^2}^{\frac 12}\sum_{t\ge 0}\pa{1+\a^2}{24\a^2n}^{\frac t2}\ \ \left(\text{as}\ n>\widehat{g}(N+1)\ge \frac{N^2}{2}\right)\\\nonumber
&\le 1.6\left(1+C^*(k)\right)\pa{24k-1}{24}^{\frac N2}\frac{1}{\sqrt{N}}\pa{1+\a^2}{12\a^2}^{\frac 12}\sum_{t\ge 0}\pa{1+\a^2}{48\a^2}^{\frac t2}\ \ \left(\text{as}\ n> \frac{N^2}{2}\ge 2\ \text{for}\ N\ge 2\right)\\\label{Mainthmeqn19}
&\le 0.8\left(1+C^*(k)\right)\pa{24k-1}{24}^{\frac N2}\frac{1}{\sqrt{N}}.
\end{align}
Combining \eqref{Mainthmeqn15} and \eqref{Mainthmeqn19}, it follows that for $N\ge 1$,
\begin{equation}\label{Mainthmeqn20}
\left|S^{[1]}_{k,2}(n;N)\right|\le E^{[2]}_{N,4}(k)\cdot n^{-\frac{N+1}{2}}
\end{equation}
with
\begin{equation}\label{Mainthmerror4b}
E^{[2]}_{N,4}(k):=0.8
\frac{\left(1+C^*(k)\right)}{\sqrt{N}}\pa{24k-1}{24}^{\frac N2}.
\end{equation}
Combining \eqref{Mainthmeqn13a} and \eqref{Mainthmeqn20}, 
\begin{equation}\label{Mainthmeqn21}
\left|S^{[1]}_{k}(n;N)\right|\le \left(E^{[1]}_{N,4}(k)+E^{[2]}_{N,4}(k)\right)n^{-\frac{N+1}{2}}=:E_{N,4}(k)n^{-\frac{N+1}{2}}.
\end{equation}
Finally, setting
\begin{equation}\label{Mainthmerrorfinal}
E_N(k):=\sum_{m=1}^{4}E_{N,m}(k),
\end{equation}
and combining \eqref{Mainthmeqn8}, \eqref{Mainthmeqn9}, \eqref{Mainthmeqn13}, and \eqref{Mainthmeqn21} concludes the proof of the theorem.
\qed
	
\section{Appendix}\label{Sec5}
This section is divided into two Subsections \ref{Sec5a} and \ref{Sec5b} where we utilize Schneider's Symbolic Summation machinery, implemented in Mathematica in the form of the \texttt{Sigma} package, to simplify the sums $(S_j(t))_{1\le j\le 9}$ defined in \Cref{BPRSinvlem5}. 
These simplifications are crucial to obtain the fundamental estimates as specified in Lemma \ref{BPRSinvlem5}. We observed that there are two classes: (i) the sums $S_j(t)$ for $j\in\{1,4,5,6,8\}$ which are (or reduce to) single sums, and (ii) the sums $S_j(t)$ with $j\in\{2,3,7,9\}$ which can be simplified to combinations of certain double sums. We restrict to show details of the summation machinery of \texttt{Sigma} by choosing the double sum representations of $S_2(t)$ and $S_3(t)$ from class (ii) mentioned before. To illustrate the full power of \texttt{Sigma} some further aspects are given for the more involved triple sum $S_3(t)$. The \texttt{Sigma} simplification of the remaining sums works similarly.
\subsection{Simplification of the sums $S_2(t)$ and $S_3(t)$ using \texttt{Sigma}}\label{Sec5a}
Before we apply our summation tools to $S_2(t)$ we observe that the sum representation in~\eqref{BPRSinvdef5c} can be rewritten in the form
$$
S_2(t)=\sum_{u=1}^{t-2} \frac{(-1)^u \big(
	\alpha ^2\big)^u 
	}{(2 u-1)!}\sum_{s=0}^{
	t
	-u-2
} \frac{\big(
-(1+\alpha^{-2})\big)^{-s
	-u
} (-s
-u
)_u \big(
\frac{1}{2}
-s
-u
\big)_{u+s+1
}}{(s
+u
) (s
+2 u
)!};$$
recall that $\alpha=\frac{\pi}{6}$.
In the following we will focus on the inner sum which reads as
$$T(t,u)=
\sum_{s=0}^{
	t
	-u-2
} \frac{\pi ^{2 (s
		+u
		)} \big(
	\frac{-1}{36+\pi ^2}\big)^{s
		+u
	} (-s
	-u
	)_u \big(
	\frac{1}{2}
	-s
	-u
	\big)_{1
		+s
		+u
}}{(s
	+u
	) (s
	+2 u
	)!}.$$
Using the summation package \texttt{Sigma} (or alternatively, any other package, like~\cite{PauleSchorn:95}, that can deal with the creative telescoping paradigm~\cite{Z} for hypergeometric products) one obtains the linear recurrence
\begin{multline}\label{Equ:SingleSumRecBad}
\pi ^2 u T(t,u)
+2 \big(
72+\pi ^2\big) (1+u) T(t,u+1)
+\pi ^2 (2+u) T(t,u+2)\\
=\tfrac{(-1)^t \pi ^{-2+2 t} (-3+t) (-2+t) (-7+2 t) (-5+2 t) (-3+2 t) (1+u) (4-t)_u \big(
	\frac{9}{2}-t\big)_{-3+t}}{2 (-3
	+t
	-u
	) (-2
	+t
	-u
	) (t
	+u
	) (-3
	+t
	+u
	) (-2
	+t
	+u
	) (-1
	+t
	+u
	)(
	36+\pi ^2)^{t-2} (-4
	+t
	+u
	)!}.
\end{multline}
Next, we are interested in d'Alembertian solutions of the found recurrence, i.e., in the set of all solutions that can be given in terms of indefinite nested sums defined over hypergeometric products. Using \texttt{Sigma} we find two linearly independent solutions
\begin{equation}\label{Equ:HomSol}
\frac{1}{u}\Big(\frac{
	-72-\pi ^2+12 \sqrt{36
		+\pi ^2
	}}{\pi^2}\Big)^u\quad\text{and}\quad\frac{(-1)^u}{u}\Big(\frac{
	-72-\pi ^2+12 \sqrt{36
		+\pi ^2
}}{\pi^2}\Big)^u
\end{equation}
of the homogeneous version of the recurrence~\eqref{Equ:SingleSumRecBad}. To this end, we apply a generalized version~\cite{ABPS} of
Petkov{\v s}ek's Hyper algorithm~\cite{Petkov:92}; we remark that the bases $\frac{-72-\pi ^2+12 \sqrt{36+\pi ^2}}{\pi ^2}$  and $-\frac{72+\pi ^2+12 \sqrt{36+\pi ^2}}{\pi ^2}$of the powers in~\eqref{Equ:HomSol} are the roots of the characteristic polynomial
\begin{equation}\label{Equ:CharPoly}
q(x)=\pi ^2 x^2+2 \left(72+\pi ^2\right) x+\pi ^2
\end{equation}
that appears in the underlying algorithm~\cite{Petkov:92}.

In the next step, we can apply the algorithm given in~\cite{AP:94} to find the particular solution 
\begin{multline*}
\tfrac{(
36+\pi ^2)^2 (-3+t) (-2+t) (-7+2 t) (-5+2 t) (-3+2 t) (-1)^t \pi ^{2 t} \big(
-72-\pi ^2+12 \sqrt{36
	+\pi ^2
}\big)^u\big(\tfrac{9}{2}-t\big)_{-3+t}}{2 u \pi^{4+2 u} (
36+\pi^2)^t}\times\\ 
\times\sum_{i=1}^u\tfrac{(-1)^i\big(
72+\pi ^2+12 \sqrt{36
	+\pi ^2
}\big)^i}{\big(
-72-\pi ^2+12 \sqrt{36
+\pi ^2
}\big)^{i}} 
\sum_{j=1}^i \tfrac{(-1)^j (-1+j) \pi ^{2 j} (4-t)_j}{(j
	-t
	) (1
	+j
	-t
	) (2
	+j
	-t
	) (3
	+j
	-t
	) (-3
	+j
	+t
	) (-2
	+j
	+t
	) \big(
	72+\pi ^2+12 \sqrt{36
		+\pi ^2
	}\big)^{j} (-4
	+j
	+t
	)!}
\end{multline*}
of the recurrence~\eqref{Equ:SingleSumRecBad} in terms of an indefinite nested double sum over the powers given in~\eqref{Equ:HomSol}. Then a crucial step for further processing, i.e., for carrying out the estimates in Section~\ref{Sec5b}, is the simplification of the found particular solution. Here again the summation package \texttt{Sigma} can assist; for a survey of the used telescoping algorithms we refer, e.g., to~\cite{S21}. However, with the given limitations of the computer algebra system Mathematica, the underlying implementation does not work properly when dealing with algebraic elements such as $\sqrt{36+\pi ^2}$. To overcome this situation, we perform the substitution
\begin{equation}\label{Equ:PiToBSum2}
\pi \mapsto \frac{6 \sqrt{1-b^2}}{b}
\end{equation}
for a new variable $b$; note that the inverse substitution is
\begin{equation}\label{Equ:BToPiSum2}
b\mapsto \frac{6}{\sqrt{36+\pi ^2}}.
\end{equation}
In this way, the technically challenging element $\sqrt{36+\pi^2}$ vanishes; it simplifies to
$$\sqrt{36+\pi^2}=\frac{6}{b}.$$
We remark that such rewritings (also called rationalizing transformations) appear frequently when dealing with nested integrals in the research area of elementary particle physics; for a survey and applications we refer to~\cite{Raab}.

After this preprocessing step we continue with the sum representation
$$
\tilde{S}_2(t)=\sum_{u=1}^{t-2} \frac{(-1)^u \big(
	\frac{1-b^2}{b^2}\big)^u 
	}{(2 u-1)!}\tilde{T}(t,u)$$
where
$$\tilde{T}(t,u)=\sum_{s=0}^{t
	-u-2
} \frac{\big(
	b^2-1\big)^{s
		+u
	} (-s
	-u
	)_u \big(
	\frac{1}{2}
	-s
	-u
	\big)_{1
		+s
		+u
}}{(s
	+u
	) (s
	+2 u
	)!}.$$
By construction, if we apply the substitution~\eqref{Equ:BToPiSum2} to $\tilde{S}_2(t)$ one gets $S_2(t)$ back.  Repeating the above calculations in this new representation we obtain the improved recurrence
\begin{multline*}
(-1+b) (1+b) u\tilde{T}(t,u)
-2 \big(
1+b^2\big) (1+u) \tilde{T}(t,u+1)
+(-1+b) (1+b) (2+u) \tilde{T}(t,u+2)\\
=\tfrac{(-1+b)^{-1+t} (1+b)^{-1+t} (-3+t) (-2+t) (-7+2 t) (-5+2 t) (-3+2 t) (1+u) (4-t)_u \big(
	\frac{9}{2}-t\big)_{-3+t}}{2 (-3
	+t
	-u
	) (-2
	+t
	-u
	) (t
	+u
	) (-3
	+t
	+u
	) (-2
	+t
	+u
	) (-1
	+t
	+u
	) (-4
	+t
	+u
	)!}.
\end{multline*}
We remark that the correctness of the recurrence can be verified by the summand recurrence (not printed here) that is provided by the creative telescoping method.
Solving this recurrence  we get now the two linearly independent solutions
$$\tilde{H}_1(t,u)=\frac{(b-1)^u}{u(1+b)^{u}}\quad\text{and}\quad \tilde{H}_2(t,u)=\frac{(1+b)^u}{u(b-1)^{u} }$$
of the homogeneous version; note that these solutions agree with~\eqref{Equ:HomSol} with the substitutions in~\eqref{Equ:BToPiSum2}, resp.~\eqref{Equ:PiToBSum2}. Most relevant, using \texttt{Sigma} the found particular solution can be simplified to the form
\begin{multline*}
\tilde{P}(t,u)=\tfrac{(-1+b)^{t+u} (1+b)^{-3+t-u} \big(
	\tfrac{1}{2}-t\big)_t}{t!} R_1(t,u)
	+\tfrac{(-1+b)^{-3+t-u} (1+b)^{t+u} \big(
		\frac{1}{2}-t\big)_t}{t!} R_2(t,u)\\
	+\tfrac{(-1+b)^{-1+t} (1+b)^{-1+t} \big(
		\frac{1}{2}-t\big)_t (-t)_u}{(t
		+u
		)!}R_3(t,u)
	-\tfrac{(-1+b)^{t-u} (1+b)^{t+u}
		\big(\frac{1}{2}-t\big)_t}{2 u}\sum_{i=1}^u \frac{(-1+b)^{i} (1+b)^{-i} (-t)_{i}}{\big(
		t
		+i
		\big)!}\\
	-\tfrac{(-1+b)^{t+u} (1+b)^{t-u} \big(\frac{1}{2}-t\big)_t}{2 u}\sum_{i=1}^u \frac{(-1+b)^{-i} (1+b)^{i} (-t)_{i}}{\big(
		t
		+i
		\big)!}
\end{multline*}
for some rational functions $R_1(t,u)$, $R_2(t,u)$ and $R_3(t,u)$ in the variables $b,u,t$. We remark that correctness of the computed solutions can be verified  easily by plugging them into the recurrence, carrying out the shift operator on the indefinite nested sums and products, and by finally simplification by performing simple rational function arithmetic.

In summary, using \texttt{Sigma} we found two linearly independent solutions of the homogeneous version together with the a particular solution $\tilde{P}(t,u)$ of the recurrence itself. Due to the linearly independence of the homogeneous solutions, there exist unique constants $c_1(t)$ and $c_2(t)$ (free of $u$) such that
\begin{equation}\label{Equ:GeneralSol}
\tilde{T}(t,u)=c_1(t)\,\tilde{H}_1(t,u)+c_2(t)\,\tilde{H}_2(t,u)+\tilde{P}(t,u)
\end{equation}
holds for all nonnegative integers $t$ and $u$ with $t\geq2$ and $1\leq u\leq t-2$. 
To determine these $c_j(t)$, we consider the two initial values $\tilde{T}(t,1)$ and $\tilde{T}(t,2)$ which can be simplified with the same technology as above (where $t$ has taken over the role of $u$) to the form
\begin{align*}
\tilde{T}(t,1)&=\tfrac{1}{2}-\tfrac{(-1+b)^{-1+t} (1+b)^{-1+t} \big(
	\tfrac{1}{2}-t\big)_t}{(-1+2 t)t!}
+\tfrac{1}{(-1+b) (1+b)}\sum_{i=1}^t \frac{(-1+b)^i (1+b)^i \big(
	\frac{1}{2}-i\big)_i}{(-1+2 i) i!},\\
\tilde{T}(t,2)&=\tfrac{3+b^2}{4 (-1+b) (1+b)}-\tfrac{(-1+b)^{-1+t} (1+b)^{-1+t} \big(
	\frac{1}{2}-t\big)_t}{(-1+2 t)t!}
+\tfrac{(
	1+b^2)}{(-1+b)^2 (1+b)^2}\sum_{i=1}^t \frac{(-1+b)^i (1+b)^i \big(
	\frac{1}{2}-i\big)_i}{(-1+2 i) i!}.
\end{align*}
By setting $u=1,2$ in~\eqref{Equ:GeneralSol} and solving the underlying system we compute the $c_1(t)$ and $c_2(t)$ (which we do not print here) and 
obtain the desired simplification
\begin{equation}\label{Equ:T2ClForm}
\begin{split}
\tilde{T}(t,u)&=\frac{\big(
	(1+b) (-1+b)^{2 u}
	+(-1+b) (1+b)^{2 u}
	\big)}{4 b u(-1+b)^{u} (1+b)^{u}}\\
&-\frac{\big(
	(1+b) (-1+b)^{2 u}
	+(-1+b) (1+b)^{2 u}
	\big)(-1+b)^{t-u} (1+b)^{t-u} \big(
	\frac{1}{2}-t\big)_t}{4bu\,t!}\\
&+\tfrac{\big(
	(-1+b) (1+b) (-1+t) (-1+2 t)
	+2 t u
	+2 u^2
	\big)(t
	-u
	) (-1+b)^{-1+t} (1+b)^{-1+t} \big(
	\frac{1}{2}-t\big)_t (-t)_u}{2 (-1+t) t (-1+2 t) u(t
	+u
	)!}\\
&-\frac{(-1+b)^{t-u} (1+b)^{t+u}
	\big(\frac{1}{2}-t\big)_t}{2 u}\sum_{i=1}^u \frac{(-1+b)^i (-t)_i}{(i
	+t
	)! (1+b)^{i}}\\
&+\frac{\big(
	(1+b)^{2 u}
	-(-1+b)^{2 u}
	\big) (-1+b)^{-u} (1+b)^{-u} }{4 b u}\sum_{i=1}^t \frac{(-1+b)^i (1+b)^i \big(
	\frac{1}{2}-i\big)_i}{(-1+2 i) i!}\\
&-\frac{(-1+b)^{t+u} (1+b)^{t-u}
	\big(\frac{1}{2}-t\big)_t}{2 u}\sum_{i=1}^u \frac{(1+b)^i (-t)_i}{(-1+b)^{i} (i
	+t
	)!}.
\end{split}
\end{equation}
Finally, we replace in
$$\tilde{S}_2(t,u)=\sum_{u=1}^{t-2}\frac{(-1)^u \big(
	\frac{1-b^2}{b^2}\big)^u}{(-1+2 u)!}\tilde{T}(t,u)$$
the sum $\tilde{T}(t,u)$ by the right-hand side of~\eqref{Equ:T2ClForm} and apply the substitution~\eqref{Equ:PiToBSum2}. This yields an expression that agrees with $S_2(t)$ for all $t\geq2$ from which one can read of the splittings~\eqref{sum2part1}--\eqref{sum2part2} of $S_2(t)$ which in Section~\ref{Sec5b} are used to obtain the estimate~\eqref{BPRSinvlem5eqn2} in Lemma~\ref{BPRSinvlem5}.

As indicated above, all calculations can be verified independently from the steps of their algorithmic derivations and thus lead to a rigorous proof for the correctness of the simplification. However, all the steps are tedious to handle manually. 
To assist such calculations (that are similarly to those that we frequently tackle in particle physics~\cite{Physics}), 
we provide besides \texttt{Sigma} the package \texttt{EvaluateMultiSums}~\cite{S13} which contains all the experience of the \texttt{Sigma}--developer to carry out these calculations  fully automatically. More precisely, after loading the packages
into the computer algebra system Mathematica
\begin{mma}
	\In << Sigma.m \\
	\vspace*{-0.1cm}
	\Print \LoadP{Sigma - A summation package by Carsten Schneider
		\copyright\ RISC-JKU}\\
\end{mma}
\begin{mma}
	\In << EvaluateMultiSums.m \\
	\vspace*{-0.1cm}
	\Print \LoadP{EvaluateMultiSum by Carsten Schneider
		\copyright\ RISC-JKU}\\
\end{mma}
\noindent we can insert the sum
\begin{mma}
	\In \tilde{T}=\sum_{s=0}^{t
		-u-2
	} \frac{\big(
		b^2-1\big)^{s
			+u
		} (-s
		-u
		)_u \big(
		\frac{1}{2}
		-s
		-u
		\big)_{1
			+s
			+u
	}}{(s
		+u
		) (s
		+2 u
		)!};\\
\end{mma}
\noindent and get with the command
\begin{mma}
\In EvaluateMultiSum[\tilde{T},\{\},\{u,t\},\{1,2\},\{t-2,\infty\}]\\
\Out \dots\\
\end{mma}
\noindent the output $``\dots"$ that is equivalent to~\eqref{Equ:T2ClForm}. Actually, one obtains a slight variant of it that one can transform to the version~\eqref{Equ:T2ClForm} using the command \texttt{SigmaReduce}; for a concrete application of such a transformation we refer to \myIn{\ref{MMA:SR1}} and \myIn{\ref{MMA:SR2}} below.

\medskip

Using this technology, we focus now on the more involved triple sum $S_3(t)$ defined in~\eqref{BPRSinvdef6b}. Similarly to $S_2(t)$ we rewrite $S_3(t)$ and work with the alternative sum representation
$$S_3(t)=
\sum_{u=0}^{t-2} \frac{(-1)^u \big(\frac{\pi}{6}\big)^{2 u} 
	}{(2 u)!}\sum_{s=0}^{
	t
	-u-2
} \frac{\big(
\frac{1}{2}
-s
-u
\big)_{1
	+s
	+u
} (-s
-u
)_u 
}{(1
+s
+2 u
)!}\sum_{r=0}^{
t
-u-s-1
} (-1)^r\big(\tfrac{6}{\pi}\big)^{2r}\binom{-\frac{1}{2}-r}{-1
-r
-s
+t
-u
}.$$
When trying to apply the summation tools on these summations, it is reasonable to start with the inner most sum, say $U(t,s,u)$. One finds the representation
\begin{multline*}
U(t,s,u):=\sum_{r=0}^{
	t
	-u-s-1
} (-1)^r\Big(\frac{6}{\pi}\Big)^{2r}\binom{-\frac{1}{2}-r}{-1
	-r
	-s
	+t
	-u
}\\	
=-(-1)^{s+t+u} \pi ^{2+2 s-2 t+2 u} \big(
36+\pi ^2\big)^{-1-s+t-u}
-\frac{(-1)^{s+u} \pi ^{2+2 s+2 u} \big(
	36+\pi ^2\big)^{-1-s-u} \binom{-\frac{1}{2}}{t}}{-1+2 t}\\
+(-1)^{s+t+u} \pi ^{2+2 s-2 t+2 u} \big(
36+\pi ^2\big)^{-1-s+t-u} 
\sum_{i=1}^t \frac{\pi ^{2 i} \big(
	-\frac{1}{36+\pi ^2}\big)^i \binom{-\frac{1}{2}}{i}}{-1+2 i}\\
+\frac{1}{2} (-1)^{s+u} \pi ^{2+2 s+2 u} \big(
36+\pi ^2\big)^{-1-s-u} 
\sum_{i=1}^u \frac{\pi ^{-2 i} \big(
	-36-\pi ^2\big)^i \binom{-\frac{1}{2}}{-1
		-i
		+t
}}{-i
	+t
}\\
+\frac{1}{2} (-1)^s \pi ^{2+2 s} \big(
36+\pi ^2\big)^{-1-s} 
\sum_{i=1}^s \frac{\pi ^{-2 i} \big(
	-36-\pi ^2\big)^i \binom{-\frac{1}{2}}{-1
		-i
		+t
		-u
}}{-i
	+t
	-u
}.
\end{multline*}
In the next step, one may deal with the second summation
$$T(t,u)=\sum_{s=0}^{
	t
	-u-2
} \frac{\big(
	\frac{1}{2}
	-s
	-u
	\big)_{1
		+s
		+u
	} (-s
	-u
	)_u 
}{(1
	+s
	+2 u
	)!}U(t,u,s)$$
using the simplification of $U(t,u,s)$. Using \texttt{Sigma} one obtains the recurrence
$$\pi ^2 (1+2 u) T(t,u)
+2 \big(
72+\pi ^2\big) (3+2 u) T(t,u+1)
+\pi ^2 (5+2 u) T(t,u+2)
=R(t,u)$$
with a right-hand side $R(t,u)$ represented in terms of indefinite nested sums too large for being printed here. Solving the homogeneous version of the recurrence gives the two solutions
$$\frac{1}{2u+1}\Big(\frac{
	-72-\pi ^2+12 \sqrt{36
		+\pi ^2
}}{\pi^2}\Big)^u\quad\text{and}\quad\frac{(-1)^u}{2u+1}\Big(\frac{
	-72-\pi ^2+12 \sqrt{36
		+\pi ^2
}}{\pi^2}\Big)^u
$$
which are closely related to the solutions~\eqref{Equ:HomSol} when dealing with $S_2(t)$. Indeed, the algorithm~\cite{Petkov:92} (and the generalized version~\cite{ABPS} implemented in \texttt{Sigma}) asks again for the roots of the same polynomial~\eqref{Equ:CharPoly} that establishes the powers involving the algebraic element 
$\sqrt{36
	+\pi ^2
}$. Thus using the same rationalizing transformation~\eqref{Equ:PiToBSum2} that we used above for the $S_2(t)$ one obtains homogeneous solutions that are free of any algebraic elements. This suggests to reconsider the sum $S_3(t)$ by applying first the substitution~\eqref{Equ:PiToBSum2} leading to
$$\tilde{S}_3(t)=\sum_{u=0}^{t-2} \frac{(-1)^u \big(
	\frac{1-b^2}{b^2}\big)^u 
	}{(2 u)!}\sum_{s=0}^{t-i-2
} \frac{(-s
-u
)_u \big(
\frac{1}{2}
-s
-u
\big)_{1
	+s
	+u
}}{(1
+s
+2 u
)!}\sum_{r=0}^{
t-u-s-1
} (-1)^r \big(
\tfrac{b^2}{1-b^2}\big)^{r} \binom{-\frac{1}{2}-r}{-1
-r
-s
+t
-u
};$$
i.e., $S_3(t)$ equals to $\tilde{S}_3(t)$ after applying the inverse transformation~\eqref{Equ:BToPiSum2}. To this end, we insert the double sum
\begin{mma}
\In \tilde{T}=\sum_{s=0}^{t-i-2
} \frac{(-s
	-u
	)_u \big(
	\frac{1}{2}
	-s
	-u
	\big)_{1
		+s
		+u
}}{(1
	+s
	+2 u
	)!}\sum_{r=0}^{
	t-u-s-1
} (-1)^r \big(
\tfrac{b^2}{1-b^2}\big)^{r} \binom{-\frac{1}{2}-r}{-1
	-r
	-s
	+t
	-u
};\\
\end{mma}
\noindent and apply our summation toolbox by executing
\begin{mma}
\In closedForm\tilde{T}=EvaluateMultiSum[\tilde{T},\{\}, \{u, t\}, \{0, 2\}, \{t - 2, \infty\}, SplitSums \to False]\\
\Out \frac{\big(\big(
	1-b+b^2\big) (-1+b)^{2 u}
	+\big(
	1+b+b^2\big) (1+b)^{2 u}
	\big) (-1+b)^{1-t-u} (1+b)^{1-t-u}}{2 b^2 (1+2 u)}\newline
-\frac{(-3+2 t) \big(
	-1
	+2 b^2 t
	\big)
	\big((-1+b)^{1+2 u}
	-(1+b)^{1+2 u}
	\big) (-1+b)^{-u} (1+b)^{-u} \big(
		\frac{5}{2}-t\big)_{-1+t}}{4 b^2 (1+2 u)t!}\newline
+\frac{b (-3+2 t) (-1+2 t) (-1+b)^{-1-u} (1+b)^{1+u}
	\big(\frac{5}{2}-t\big)_{-1+t}}{2 t (1+2 u)}\sum_{i=1}^u \frac{(-1+b)^i (1+b)^{-i} (-t)_i}{(-1
	+i
	+t
	)!}\newline
+\frac{3 \big(
	(-1+b)^{1+2 u}
	-(1+b)^{1+2 u}
	\big) (-1+b)^{-t-u} (1+b)^{-t-u}}{4 b^2 (1+2 u)}\sum_{i=1}^t \frac{(-1+b)^i (1+b)^i \big(
	\frac{5}{2}-i\big)_{-1+i}}{i!}\newline
-\frac{b (-3+2 t) (-1+2 t) (-1+b)^{1+u} (1+b)^{-1-u}
	\big(\frac{5}{2}-t\big)_{-1+t}}{2 t (1+2 u)}\sum_{i=1}^u \frac{(-1+b)^{-i} (1+b)^i (-t)_i}{(-1
	+i
	+t
	)!}\\
\end{mma}
In order to represent the expression in terms of the products $u!,(-t)_u$ and $t!, (t + u)!, (1/2 - t)_t$, one may use the following commands: 
\begin{mma}\MLabel{MMA:SR1}
\In closedForm\tilde{T}=SigmaReduce[closedForm\tilde{T}, u, 
Tower \to \{u!,(-t)_u
\}]\\
\MLabel{MMA:SR2}
\In closedForm\tilde{T}=SigmaReduce[closedForm\tilde{T}, t, 
Tower \to \{t!, (t + u)!, (1/2 - t)_t\}]\\
\MLabel{OutT}\Out \frac{\big(
	(-1+b)^{2 u}
	+(1+b)^{2 u}
	\big)}{2 (1+2 u)(-1+b)^{t+u-1} (1+b)^{t+u-1}}-\frac{\big(
	(-1+b)^{2 u}
	+(1+b)^{2 u}
	\big)(-1+b)^{1-u} (1+b)^{1-u} \big(
		\frac{1}{2}-t\big)_t}{2 (1+2 u)t!}\newline
+\frac{(t-u) \left(2 b^2 t-b^2-2 t+2 u+2\right)\big(
	\frac{1}{2}-t\big)_t (-t)_u}{t (2 t-1) (2 u+1) (t+u)!}
\newline
+\frac{\big(
	(-1+b)^{2 u}
	+(1+b)^{2 u}
	-b (-1+b)^{2 u}
	+b (1+b)^{2 u}
	\big)}{2 (1+2 u)(-1+b)^{t+u} (1+b)^{t+u}}  
\sum_{i=1}^t \frac{(-1+b)^i (1+b)^i \big(
	\frac{1}{2}-i\big)_i}{(-1+2 i) i!}\newline
-\frac{b (1+b)^{1+u}
	\big(\frac{1}{2}-t\big)_t}{(1+2 u)(-1+b)^{u}}\sum_{i=1}^u \frac{(-1+b)^i (1+b)^{-i} (-t)_i}{(i
	+t
	)!}
-\frac{b (-1+b)^{1+u}
	\big(\frac{1}{2}-t\big)_t}{(1+2 u)(1+b)^{u}}\sum_{i=1}^u \frac{(-1+b)^{-i} (1+b)^i (-t)_i}{(i
	+t
	)!}
\\
\end{mma}
\noindent Denoting the result in \myOut{\ref{OutT}} by $T'(t,u)$ we obtain
\begin{equation}\label{Equ:S3Simple}
\tilde{S}_3(t)=\sum_{u=0}^{t-2} \frac{(-1)^u \big(
	\frac{1-b^2}{b^2}\big)^u 
}{(2 u)!}T'(t,u).
\end{equation}
Finally, applying the substitution~\eqref{Equ:BToPiSum2} to the right-hand side of~\eqref{Equ:S3Simple} yields the simplification of $S_3(t)$ that has been used to perform
the estimate~\eqref{BPRSinvlem5eqn3} stated in Lemma~\ref{BPRSinvlem5}; for further details we refer to  Section~\ref{Sec5b}.

To our knowledge this is the first time that rationalizing transformations, usually applied to simplify integrals, have been applied non-trivially to symbolic summation. As pointed out, in our concrete calculations such transformations can be discovered automatically when one looks for ``nice'' hypergeometric solutions and sum solutions over such products of a given linear recurrence. It would be interesting to see where this extra feature could be also exploited in other applications.

\subsection{Estimates for the sums $(S_j(t))_{1\le j\le 9}$}\label{Sec5b}
We restrict to give detailed proofs for the bound estimates of $S_1(t)$ and $S_2(t)$ in Lemma \ref{BPRSinvlem5}. The derivations of the cases $3\le j\le 9$ work analogously.


\emph{Proof of Lemma \ref{BPRSinvlem5}:} 
We start with the definition \eqref{BPRSinvdef5b},
\begin{align}\nonumber
S_1(t)&=\pa{-1}{24}^t\frac{\left(\frac 12-t\right)_{t+1}}{t}\sum_{u=1}^{t}\frac{(-1)^u (-t)_u}{(t+u)!(2u-1)!}\alpha^{2u}\\\nonumber
&=\frac{\smallbinom{2t}{t}}{2t\cdot 96^t}\sum_{u=1}^{t}\left(\prod_{j=1}^{u}\frac{1-\frac{j-1}{t}}{1+\frac{j}{t}}\right)\frac{\alpha^{2u}}{(2u-1)!}\\\nonumber
&=\frac{1}{2\sqrt{\pi}}\frac{1}{24^t\cdot t^{\frac 32}}\left(1+O_{\le \frac 18}\left(\frac 1t\right)\right)\sum_{u=1}^{t}\left(\prod_{j=1}^{u}\frac{1-\frac{j-1}{t}}{1+\frac{j}{t}}\right)\frac{\alpha^{2u}}{(2u-1)!}\ \ \left(\text{by \Cref{BPRSprelimlem2}}\right)\\\nonumber
&=\frac{1}{2\sqrt{\pi}}\frac{1}{24^t\cdot t^{\frac 32}}\left(1+O_{\le \frac 18}\left(\frac 1t\right)\right)\sum_{u=1}^{t}\left(1+O_{\le u^2}\left(\frac 1t\right)\right)\frac{\alpha^{2u}}{(2u-1)!}\\\nonumber
&\hspace{5 cm} \left(\text{by \Cref{BPRSprelimlem1} with}\ (x_j,y_j,n)=\left(\frac{j-1}{t},\frac jt,u\right)\right)\\\label{S1estimeqn1}
&=\frac{1}{2\sqrt{\pi}}\frac{1}{24^t\cdot t^{\frac 32}}\left(1+O_{\le \frac 18}\left(\frac 1t\right)\right)\left(\underset{=:S^{[1]}_1(t)}{\underbrace{\sum_{u=1}^{t}\frac{\alpha^{2u}}{(2u-1)!}}}+\underset{=:S^{[2]}_1(t)}{\underbrace{\sum_{u=1}^{t}\frac{\alpha^{2u}}{(2u-1)!}\cdot O_{\le u^2}\left(\frac 1t\right)}}\right).
\end{align}
First we bound $S^{[2]}_1(t)$,
\begin{equation}\label{S1estimeqn2}
\left|S^{[2]}_1(t)\right|\le \frac 1t\sum_{u=1}^{t}\frac{u^2\alpha^{2u}}{(2u-1)!}\le \frac 1t\sum_{u\ge 1}\frac{u^2\alpha^{2u}}{(2u-1)!}=\frac{\frac{\a}{4}\left(3\a\cosh(\a)+(\a^2+1)\sinh(\a)\right)}{t}.
\end{equation}
Now it remains to estimate $S^{[1]}_1(t)$. Note that
\begin{equation*}
S^{[1]}_1(t)=\sum_{u=1}^{\infty}\frac{\alpha^{2u}}{(2u-1)!}- \sum_{u=t+1}^{\infty}\frac{\alpha^{2u}}{(2u-1)!}=\a\sinh(\a)-\sum_{u=t+1}^{\infty}\frac{\alpha^{2u}}{(2u-1)!}.
\end{equation*}
By Lemma \ref{BPRSprelimlem3} with $k=1$, 
\begin{equation*}
0<\sum_{u=t+1}^{\infty}\frac{\alpha^{2u}}{(2u-1)!}=2\sum_{u=t+1}^{\infty}\frac{u\cdot \alpha^{2u}}{(2u)!}\le \frac{2\a^4}{9\cdot t^2}\le \frac{2\a^3}{t},
\end{equation*}
which implies that
\begin{equation}\label{S1estimeqn3}
S^{[1]}_1(t)=\a\sinh(\a)+O_{\le 2\a^3}\left(\frac 1t\right).
\end{equation}
Combining \eqref{S1estimeqn2} and \eqref{S1estimeqn3} gives
\begin{align*}
&S_1(t)\\
&=\frac{1}{2\sqrt{\pi}}\frac{1}{24^t\cdot t^{\frac 32}}\left(1+O_{\le \frac 18}\left(\frac 1t\right)\right)\left(\a\sinh(\a)+O_{\le 2\a^3}\left(\frac 1t\right)+O_{\le \frac{\a}{4}\left(3\a\cosh(\a)+(\a^2+1)\sinh(\a)\right)}\left(\frac 1t\right)\right)\\
&=\frac{\a\sinh(\a)}{2\sqrt{\pi}}\frac{1}{24^t\cdot t^{\frac 32}}\left(1+O_{\le \frac 18}\left(\frac 1t\right)\right)\left(1+O_{\le \frac{2\a^3}{\a\sinh(\a)}}\left(\frac 1t\right)+O_{\le \frac{\frac{\a}{4}\left(3\a\cosh(\a)+(\a^2+1)\sinh(\a)\right)}{\a\sinh(\a)}}\left(\frac 1t\right)\right)\\
&=\frac{\alpha\sinh(\alpha)}{2\sqrt{\pi}}\frac{1}{24^t\cdot t^{\frac 32}}\left(1+O_{\le 2.6}\left(\frac 1t\right)\right),
\end{align*}
which finishes the proof of \eqref{BPRSinvlem5eqn1}.

To estimate the sum $S_2(t)$ defined in~\eqref{BPRSinvdef5c} we list three basic facts which be used for the estimates of $S_2(t)$. For $t\ge 2$,
\begin{equation}\label{fact1}
\frac{\pa{\a^2}{1+\a^2}^{t-1}}{\sqrt{t}}\le \frac{1}{2t},
\end{equation}
 for $t\in \mathbb{Z}_{\ge 0}$,
\begin{equation}\label{fact2}
(-1)^t\left(\frac 12-t\right)_t=\frac{\smallbinom{2t}{t}\cdot t!}{4^t},
\end{equation}
and for $|x|<1$, 
\begin{equation}\label{fact3}
1-\sqrt{1-x}=\sum_{m=1}^{\infty}\frac{\smallbinom{2m}{m}}{2m-1}\pa{x}{4}^m.
\end{equation} 

Next, using \texttt{Sigma} (with the summand representation~\eqref{Equ:T2ClForm}), we split the sum $S_2(t)$ as
\[
S_2(t)=\sum_{k=1}^{5}S^{[k]}_{2}(t),
\]
with
\begin{align}\label{sum2part1}
&S^{[1]}_2(t):=\left(\frac 12-t\right)_t(-1)^t\pa{\a^2}{1+\a^2}^{t-1}\sum_{u=1}^{t-2}\frac{\a^{2u}}{(2u)!}\frac{(-1)^u(-t)_u}{(t+u)!}\left(-2\frac{(t-u)u(t+u)}{(t-1)t(2t-1)}+\frac{t-u}{t}\frac{\a^2}{1+\a^2}\right),\\\label{sum2part2}
&S^{[2]}_2(t):=\frac 12\sum_{u=1}^{t-2}\left(\left(\sqrt{1+\a^2}+1\right)\frac{\left(\sqrt{1+\a^2}-1\right)^{2u}}{(2u)!}-\left(\sqrt{1+\a^2}-1\right)\frac{\left(\sqrt{1+\a^2}+1\right)^{2u}}{(2u)!}\right),\\\nonumber
&S^{[3]}_2(t):=\frac{\left(\frac 12-t\right)_t(-1)^{t+1}}{2\cdot t!}\pa{\a^2}{1+\a^2}^{t-1}\\\label{sum2part3}
&\hspace{2.5 cm}\times\sum_{u=1}^{t-2}\frac{\left(\sqrt{1+\a^2}+1\right)\left(\sqrt{1+\a^2}-1\right)^{2u}-\left(\sqrt{1+\a^2}-1\right)\left(\sqrt{1+\a^2}+1\right)^{2u}}{(2u)!},\\\nonumber
&S^{[4]}_2(t):=\left(\frac 12-t\right)_t(-1)^{t+1}\pa{\a^2}{1+\a^2}^{t}\sum_{u=1}^{t-2}\frac{1}{(2u)!}\\\nonumber
&\hspace{4 cm}\times\Biggl(\left(\sqrt{1+\a^2}+1\right)^{2u}\sum_{s=1}^{u}\frac{(-t)_s(-1)^s}{(t+s)!}\pa{\sqrt{1+\a^2}-1}{\sqrt{1+\a^2}+1}^s\\\label{sum2part4}
&\hspace{5.5 cm}+\left(\sqrt{1+\a^2}-1\right)^{2u}\sum_{s=1}^{u}\frac{(-t)_s(-1)^s}{(t+s)!}\pa{\sqrt{1+\a^2}+1}{\sqrt{1+\a^2}-1}^s\Biggr),\\\label{sum2part5}
&S^{[5]}_2(t):=\frac{\sqrt{1+\a^2}}{2}\sum_{u=1}^{t}\frac{\left(\sqrt{1+\a^2}+1\right)^{2u}-\left(\sqrt{1+\a^2}-1\right)^{2u}}{(2u)!}\sum_{s=1}^{t}\frac{\left(\frac 12-s\right)_s(-1)^s}{(2s-1) s!}\pa{\a^2}{1+\a^2}^s.
\end{align}
We treat the $S^{[k]}_2(t)$ successively. First for $S^{[1]}_2(t)$, after simplifying the right hand side of \eqref{sum2part1}, we obtain
\begin{align*}
&\left|S^{[1]}_2(t)\right|\\
&\os{\eqref{fact2}}{=}\frac{\smallbinom{2t}{t} t!}{4^t}\pa{\a^2}{1+\a^2}^{t-1}\left|\sum_{u=1}^{t-2}\frac{\a^{2u}}{(2u)!}\frac{(-1)^u(-t)_u}{(t+u)!}\left(-2\frac{(t-u)u(t+u)}{(t-1)t(2t-1)}+\frac{t-u}{t}\frac{\a^2}{1+\a^2}\right)\right|\\
&\le \frac{\pa{\a^2}{1+\a^2}^{t-1}}{\sqrt{\pi t}}\sum_{u=1}^{t-2}\frac{\a^{2u}}{(2u)!}\left(\prod_{j=1}^{u}\frac{t+1-j}{t+j}\right)\left(2\frac{(t-u)u(t+u)}{(t-1)t(2t-1)}+\frac{t-u}{t}\frac{\a^2}{1+\a^2}\right)\ \ \left(\text{by \Cref{BPRSprelimlem2}}\right)\\
&\le \left(2+\frac{\a^2}{1+\a^2}\right)\frac{\pa{\a^2}{1+\a^2}^{t-1}}{\sqrt{\pi t}}\sum_{u=1}^{t-2}\frac{\a^{2u}}{(2u)!}\\
&\hspace{1.5 cm} \left(\text{as}\ \frac{t+1-j}{t+j}\le 1\ \text{for}\ j\ge 1, \frac{(t-u)t(t+u)}{(t-1)t(2t-1)}\le 1,\ \text{and}\ \frac{t-u}{t}\le 1\ \text{for}\ 1\le u\le t-2\right)\\
&\le \left(2+\frac{\a^2}{1+\a^2}\right)\frac{\pa{\a^2}{1+\a^2}^{t-1}}{\sqrt{\pi t}}\sum_{u\ge 1}\frac{\a^{2u}}{(2u)!}=\left(2+\frac{\a^2}{1+\a^2}\right)\left(\cosh(\a)-1\right)\frac{\pa{\a^2}{1+\a^2}^{t-1}}{\sqrt{\pi t}}\\
&\os{\eqref{fact1}}{\le} \left(2+\frac{\a^2}{1+\a^2}\right)\frac{\left(\cosh(\a)-1\right)}{\sqrt{\pi}}\frac{1}{2t}\le \frac{0.1}{t},
\end{align*}
which implies that
\begin{equation}\label{sum2part1estim}
S^{[1]}_2(t)=O_{\le 0.1}\left(\frac 1t\right).
\end{equation}
According to \eqref{sum2part2} we write $S^{[2]}_2(t)$ as the difference of two sums
\begin{equation}\label{sum2part2estim1}
S^{[2]}_2(t):=\frac 12\left(S^{[2]}_{2,1}(t)-S^{[2]}_{2,2}(t)\right),
\end{equation}
with
\begin{align*}
S^{[2]}_{2,1}(t)&=\left(\sqrt{1+\a^2}+1\right)\sum_{u=1}^{t-2}\frac{\left(\sqrt{1+\a^2}-1\right)^{2u}}{(2u)!}\ \text{and}\ S^{[2]}_{2,2}(t)=\left(\sqrt{1+\a^2}-1\right)\sum_{u=1}^{t-2}\frac{\left(\sqrt{1+\a^2}+1\right)^{2u}}{(2u)!}.
\end{align*}
Next, observe that
\begin{align*}
S^{[2]}_{2,1}(t)&=\left(\sqrt{1+\a^2}+1\right)\left(\sum_{u=1}^{\infty}\frac{\left(\sqrt{1+\a^2}-1\right)^{2u}}{(2u)!}-\sum_{u=t-1}^{\infty}\frac{\left(\sqrt{1+\a^2}-1\right)^{2u}}{(2u)!}\right)\\
&=\left(\sqrt{1+\a^2}+1\right)\left(\cosh\left(\sqrt{1+\a^2}-1\right)-1-\sum_{u=t-1}^{\infty}\frac{\left(\sqrt{1+\a^2}-1\right)^{2u}}{(2u)!}\right).
\end{align*}
By Lemma \ref{BPRSprelimlem4} with $c=\sqrt{1+\a^2}-1$ and $t\mapsto t-1$, we obtain for $t\ge 2$,
\begin{equation*}
0<\sum_{u=t-1}^{\infty}\frac{\left(\sqrt{1+\a^2}-1\right)^{2u}}{(2u)!}\le \frac{\left(\sqrt{1+\a^2}-1\right)^4}{18\cdot (t-1)^2}\le \frac{\left(\sqrt{1+\a^2}-1\right)^4}{9\cdot t}\le \frac{4\cdot 10^{-5}}{t},
\end{equation*}
which in turn implies that
\begin{align}\nonumber
S^{[2]}_{2,1}(t)&=\left(\sqrt{1+\a^2}+1\right)\left(\cosh\left(\sqrt{1+\a^2}-1\right)-1+O_{\le 4\cdot 10^{-5}}\left(\frac 1t\right)\right)\\\nonumber
&=\left(\sqrt{1+\a^2}+1\right)\left(\cosh\left(\sqrt{1+\a^2}-1\right)-1\right)\left(1+O_{\le \frac{4\cdot 10^{-5}}{\cosh\left(\sqrt{1+\a^2}-1\right)-1}}\left(\frac 1t\right)\right)\\\label{sum2part2estim2}
&=\left(\sqrt{1+\a^2}+1\right)\left(\cosh\left(\sqrt{1+\a^2}-1\right)-1\right)\left(1+O_{\le 5\cdot 10^{-3}}\left(\frac 1t\right)\right).
\end{align}
Similarly, 
\begin{align*}
S^{[2]}_{2,2}(t)&=\left(\sqrt{1+\a^2}-1\right)\left(\sum_{u=1}^{\infty}\frac{\left(\sqrt{1+\a^2}+1\right)^{2u}}{(2u)!}-\sum_{u=t-1}^{\infty}\frac{\left(\sqrt{1+\a^2}+1\right)^{2u}}{(2u)!}\right)\\
&=\left(\sqrt{1+\a^2}-1\right)\left(\cosh\left(\sqrt{1+\a^2}+1\right)-1-\sum_{u=t-1}^{\infty}\frac{\left(\sqrt{1+\a^2}+1\right)^{2u}}{(2u)!}\right).
\end{align*}
By Lemma \ref{BPRSprelimlem4} with $c=\sqrt{1+\a^2}+1$ and $t\mapsto t-1$, we obtain for $t\ge 2$,
\begin{equation*}
0<\sum_{u=t-1}^{\infty}\frac{\left(\sqrt{1+\a^2}+1\right)^{2u}}{(2u)!}\le \frac{\left(\sqrt{1+\a^2}+1\right)^4}{18\cdot (t-1)^2}\le \frac{\left(\sqrt{1+\a^2}+1\right)^4}{9\cdot t}\le \frac{2.3}{t},
\end{equation*}
which implies that
\begin{align}\nonumber
S^{[2]}_{2,2}(t)&=\left(\sqrt{1+\a^2}-1\right)\left(\cosh\left(\sqrt{1+\a^2}+1\right)-1+O_{\le 2.3}\left(\frac 1t\right)\right)\\\nonumber
&=\left(\sqrt{1+\a^2}-1\right)\left(\cosh\left(\sqrt{1+\a^2}+1\right)-1\right)\left(1+O_{\le \frac{2.3}{\cosh\left(\sqrt{1+\a^2}+1\right)-1}}\left(\frac 1t\right)\right)\\\label{sum2part2estim3}
&=\left(\sqrt{1+\a^2}-1\right)\left(\cosh\left(\sqrt{1+\a^2}+1\right)-1\right)\left(1+O_{\le 0.8}\left(\frac 1t\right)\right).
\end{align}
Consequently, setting
\begin{align*}
f(t)&\mapsto S^{[2]}_{2,1}(t),\\
g(t)&= 1,\\
h(t)&\mapsto S^{[2]}_{2,2}(t),\\
A_1&= \left(\sqrt{1+\a^2}+1\right)\left(\cosh\left(\sqrt{1+\a^2}-1\right)-1\right),\\
A_2&= -\left(\sqrt{1+\a^2}-1\right)\left(\cosh\left(\sqrt{1+\a^2}+1\right)-1\right),\\
\left(E_1,E_2\right)&= \left(5\cdot 10^{-3}, 0.8\right),
\end{align*}
in \eqref{BPRSinvlem6eqn2} and applying \eqref{sum2part2estim2} and \eqref{sum2part2estim3} to \eqref{sum2part2estim1}, it follows that
\begin{align}\nonumber 
S^{[2]}_2(t)&=\frac 12\left(\left(\sqrt{1+\a^2}+1\right)\cosh\left(\sqrt{1+\a^2}-1\right)-\left(\sqrt{1+\a^2}-1\right)\cosh\left(\sqrt{1+\a^2}+1\right)-2\right)\\\label{sum2part2estim}
&\hspace{10 cm}\times\left(1+O_{\le 0.9}\left(\frac 1t\right)\right).
\end{align}
Recalling \eqref{sum2part3}, we obtain
\begin{align*}
&\left|S^{[3]}_2(t)\right|\\
&\os{\eqref{fact2}}{=}\frac{\smallbinom{2t}{t}}{2\cdot 4^t}\pa{\a^2}{1+\a^2}^t\sum_{u=1}^{t-2}\frac{\left(\sqrt{1+\a^2}+1\right)\left(\sqrt{1+\a^2}-1\right)^{2u}-\left(\sqrt{1+\a^2}-1\right)\left(\sqrt{1+\a^2}+1\right)^{2u}}{(2u)!}\\
&\os{\text{\Cref{BPRSprelimlem2}}}{\le} \frac{\pa{\a^2}{1+\a^2}^t}{2\sqrt{\pi}\sqrt{t}}\sum_{u=1}^{t-2}\frac{\left(\sqrt{1+\a^2}+1\right)\left(\sqrt{1+\a^2}-1\right)^{2u}-\left(\sqrt{1+\a^2}-1\right)\left(\sqrt{1+\a^2}+1\right)^{2u}}{(2u)!}\\
&\le \frac{\pa{\a^2}{1+\a^2}^t}{2\sqrt{\pi}\sqrt{t}}\sum_{u=1}^{t-2}\frac{\left(\sqrt{1+\a^2}+1\right)\left(\sqrt{1+\a^2}-1\right)^{2u}}{(2u)!}\le \frac{\pa{\a^2}{1+\a^2}^t}{2\sqrt{\pi}\sqrt{t}}\sum_{u\ge 1}\frac{\left(\sqrt{1+\a^2}+1\right)\left(\sqrt{1+\a^2}-1\right)^{2u}}{(2u)!}\\
&= \frac{\left(\sqrt{1+\a^2}+1\right)}{2\sqrt{\pi}}\frac{\pa{\a^2}{1+\a^2}^t}{\sqrt{t}}\left(\cosh\left(\sqrt{1+\a^2}-1\right)-1\right)\os{\eqref{fact1}}{\le} \frac{3\cdot 10^{-3}}{t},
\end{align*}
and therefore,
\begin{equation}\label{sum2part3estim}
S^{[3]}_2(t)=O_{\le 3\cdot 10^{-3}}\left(\frac 1t\right).
\end{equation}
Starting with \eqref{sum2part4}, we obtain
\begin{align*}
&\left|S^{[4]}_2(t)\right|\\
&\os{\eqref{fact2}}{=}\frac{\smallbinom{2t}{t}\pa{\a^2}{1+\a^2}^t}{4^t}\sum_{u=1}^{t-2}\frac{\left(\sqrt{1+\a^2}+1\right)^{2u}}{(2u)!}\left(\rule{0 cm}{0.8cm}\right.\sum_{s=1}^{u}\frac{(-t)_s(-1)^s}{(t+1)_s}\pa{\sqrt{1+\a^2}-1}{\sqrt{1+\a^2}+1}^s\\
&\hspace{7.5 cm}+\sum_{s=1}^{u}\frac{(-t)_s(-1)^s}{(t+1)_s}\pa{\sqrt{1+\a^2}+1}{\sqrt{1+\a^2}-1}^{s-2u}\left.\rule{0 cm}{0.8cm}\right)\\
&\le \frac{\pa{\a^2}{1+\a^2}^t}{\sqrt{\pi\cdot t}}\sum_{u=1}^{t-2}\frac{\left(\sqrt{1+\a^2}+1\right)^{2u}}{(2u)!}\left(\sum_{s=1}^{u}\pa{\sqrt{1+\a^2}-1}{\sqrt{1+\a^2}+1}^s+\sum_{s=1}^{u}\pa{\sqrt{1+\a^2}+1}{\sqrt{1+\a^2}-1}^{s-2u}\right)\\
&\hspace{1 cm} \left(\text{by \Cref{BPRSprelimlem2}}\ \text{and by}\ \frac{(-t)_s(-1)^s}{(t+1)_s}=\prod_{j=1}^{s}\frac{t+1-j}{t+j}\le 1\ \text{as}\ t+1-j\le t+j\ \text{for}\ j\ge 1\right)\\
&=\frac{\pa{\a^2}{1+\a^2}^t}{\sqrt{\pi\cdot t}}\sum_{u=1}^{t-2}\frac{\left(\sqrt{1+\a^2}+1\right)^{2u}}{(2u)!}\left(\sum_{s=1}^{u}\pa{\sqrt{1+\a^2}-1}{\sqrt{1+\a^2}+1}^s+\sum_{s=u}^{2u-1}\pa{\sqrt{1+\a^2}-1}{\sqrt{1+\a^2}+1}^{s}\right)\\
&\le \frac{\pa{\a^2}{1+\a^2}^t}{\sqrt{\pi\cdot t}}\sum_{u=1}^{t-2}\frac{\left(\sqrt{1+\a^2}+1\right)^{2u}}{(2u)!}\left(\sum_{s\ge 1}\pa{\sqrt{1+\a^2}-1}{\sqrt{1+\a^2}+1}^s+\sum_{s\ge 1}\pa{\sqrt{1+\a^2}-1}{\sqrt{1+\a^2}+1}^{s}\right)\ \left(\text{as}\ s \ge u\ge 1\right)\\
&\le \frac{\pa{\a^2}{1+\a^2}^t}{\sqrt{\pi\cdot t}}\sum_{u=1}^{t-2}\frac{\left(\sqrt{1+\a^2}+1\right)^{2u}}{(2u)!}\left(2\sum_{s\ge 1}\pa{1}{16}^s\right)\ \left(\text{as}\ \frac{\sqrt{1+\a^2}-1}{\sqrt{1+\a^2}+1}\le\frac{1}{16}\right)\\
&= \frac{2}{15\sqrt{\pi}}\frac{\pa{\a^2}{1+\a^2}^t}{\sqrt{t}}\sum_{u=1}^{t-2}\frac{\left(\sqrt{1+\a^2}+1\right)^{2u}}{(2u)!}\le \frac{2}{15\sqrt{\pi}}\frac{\pa{\a^2}{1+\a^2}^t}{\sqrt{t}}\sum_{u\ge 1}\frac{\left(\sqrt{1+\a^2}+1\right)^{2u}}{(2u)!}\\
&= \frac{2}{15\sqrt{\pi}}\frac{\pa{\a^2}{1+\a^2}^t}{\sqrt{t}}\left(\cosh\left(\sqrt{1+\a^2}+1\right)-1\right) \os{\eqref{fact1}}{\le}  \frac{0.2}{t},
\end{align*}
which implies that
\begin{equation}\label{sum2part4estim}
S^{[4]}_2(t)=O_{\le 0.2}\left(\frac 1t\right).
\end{equation}
Finally 
we rewrite $S^{[5]}_2(t)$ as
\begin{align}\nonumber
S^{[5]}_2(t)&\os{\eqref{fact2}}{=}\frac{\sqrt{1+\a^2}}{2}\sum_{u=1}^{t}\frac{\left(\sqrt{1+\a^2}+1\right)^{2u}-\left(\sqrt{1+\a^2}-1\right)^{2u}}{(2u)!}\sum_{s=1}^{t}\frac{\smallbinom{2s}{s}}{(2s-1)}\pa{\a^2}{4(1+\a^2)}^s\\\label{sum2part5estim1}
&=:\frac{\sqrt{1+\a^2}}{2}\left(S^{[5]}_{2,1}(t)-S^{[5]}_{2,2}(t)\right)S^{[5]}_{2,3}(t),
\end{align}
with
\begin{align*}
S^{[5]}_{2,1}(t)&=\sum_{u=1}^{t}\frac{\left(\sqrt{1+\a^2}+1\right)^{2u}}{(2u)!},\ S^{[5]}_{2,2}(t)=\sum_{u=1}^{t}\frac{\left(\sqrt{1+\a^2}-1\right)^{2u}}{(2u)!},\\
S^{[5]}_{2,3}(t)&=\sum_{s=1}^{t}\frac{\smallbinom{2s}{s}}{(2s-1)}\pa{\a^2}{4(1+\a^2)}^s.
\end{align*}
Observe that
\begin{align*}
S^{[5]}_{2,1}(t)&=\sum_{u=1}^{\infty}\frac{\left(\sqrt{1+\a^2}+1\right)^{2u}}{(2u)!}-\sum_{u=t+1}^{\infty}\frac{\left(\sqrt{1+\a^2}+1\right)^{2u}}{(2u)!}\\
&=\cosh\left(\sqrt{1+\a^2}+1\right)-1-\sum_{u=t+1}^{\infty}\frac{\left(\sqrt{1+\a^2}+1\right)^{2u}}{(2u)!}.
\end{align*}
Hence by Lemma \ref{BPRSprelimlem4} with $c= \sqrt{1+\a^2}+1$, we get
\[
0<\sum_{u=t+1}^{\infty}\frac{\left(\sqrt{1+\a^2}+1\right)^{2u}}{(2u)!}\le \frac{\left(\sqrt{1+\a^2}+1\right)^{4}}{18\cdot t^2}\le \frac{1.2}{t},
\]
which implies that
\begin{align}\nonumber
S^{[5]}_{2,1}(t)&=\cosh\left(\sqrt{1+\a^2}+1\right)-1+O_{\le 1.2}\left(\frac 1t\right)\\\nonumber
&=\left(\cosh\left(\sqrt{1+\a^2}+1\right)-1\right)\left(1+O_{\le \frac{1.2}{\cosh\left(\sqrt{1+\a^2}+1\right)-1}}\left(\frac{1}{t}\right)\right)\\\label{sum2part5estim2}
&=\left(\cosh\left(\sqrt{1+\a^2}+1\right)-1\right)\left(1+O_{\le 0.4}\left(\frac{1}{t}\right)\right).
\end{align}
Similarly, for $S^{[5]}_{2,2}(t)$ we obtain
\begin{align*}
S^{[5]}_{2,2}(t)&=\sum_{u=1}^{\infty}\frac{\left(\sqrt{1+\a^2}-1\right)^{2u}}{(2u)!}-\sum_{u=t+1}^{\infty}\frac{\left(\sqrt{1+\a^2}-1\right)^{2u}}{(2u)!}\\
&=\cosh\left(\sqrt{1+\a^2}-1\right)-1-\sum_{u=t+1}^{\infty}\frac{\left(\sqrt{1+\a^2}-1\right)^{2u}}{(2u)!},
\end{align*}
and by Lemma \ref{BPRSprelimlem4} with $c= \sqrt{1+\a^2}-1$, we obtain
\[
0<\sum_{u=t+1}^{\infty}\frac{\left(\sqrt{1+\a^2}-1\right)^{2u}}{(2u)!}\le \frac{\left(\sqrt{1+\a^2}-1\right)^{4}}{18\cdot t^2}\le \frac{2\cdot 10^{-5}}{t},
\]
which implies that
\begin{align}\nonumber
S^{[5]}_{2,2}(t)&=\cosh\left(\sqrt{1+\a^2}-1\right)-1+O_{\le 2\cdot 10^{-5}}\left(\frac 1t\right)\\\nonumber
&=\left(\cosh\left(\sqrt{1+\a^2}-1\right)-1\right)\left(1+O_{\le \frac{2\cdot 10^{-5}}{\cosh\left(\sqrt{1+\a^2}-1\right)-1}}\left(\frac{1}{t}\right)\right)\\\label{sum2part5estim3}
&=\left(\cosh\left(\sqrt{1+\a^2}-1\right)-1\right)\left(1+O_{\le 3\cdot 10^{-3}}\left(\frac{1}{t}\right)\right).
\end{align}
Applying \eqref{BPRSinvlem6eqn2} with
\begin{align*}
f(t)&\mapsto S^{[2]}_{5,1}(t),\\
g(t)&= 1,\\
h(t)&\mapsto S^{[2]}_{5,2}(t),\\
A_1&= \left(\cosh\left(\sqrt{1+\a^2}+1\right)-1\right),\\
A_2&= -\left(\cosh\left(\sqrt{1+\a^2}-1\right)-1\right),\\
\left(E_1,E_2\right)&= \left(0.4, 3\cdot 10^{-3}\right),
\end{align*}
from \eqref{sum2part5estim2} and \eqref{sum2part5estim3} we obtain
\begin{align}\nonumber
&S^{[5]}_{2,1}(t)-S^{[5]}_{2,2}(t)\\\label{sum2part5estim4}
&=\left(\cosh\left(\sqrt{1+\a^2}+1\right)-\cosh\left(\sqrt{1+\a^2}-1\right)\right)\left(1+O_{\le 0.5}\left(\frac 1t\right)\right).
\end{align}
Finally, for $S^{[5]}_{2,3}(t)$ it follows that
\begin{align}\nonumber
S^{[5]}_{2,3}(t)&=\sum_{s=1}^{t}\frac{\smallbinom{2s}{s}}{(2s-1)}\pa{\a^2}{4(1+\a^2)}^s=\sum_{s=1}^{\infty}\frac{\smallbinom{2s}{s}}{(2s-1)}\pa{\a^2}{4(1+\a^2)}^s-\sum_{s=t+1}^{\infty}\frac{\smallbinom{2s}{s}}{(2s-1)}\pa{\a^2}{4(1+\a^2)}^s\\\nonumber
&=1-\sqrt{1-\frac{\a^2}{1+\a^2}}-\sum_{s=t+1}^{\infty}\frac{\smallbinom{2s}{s}}{(2s-1)}\pa{\a^2}{4(1+\a^2)}^s\ \ \left(\text{by}\ \eqref{fact3}\ \text{with}\ x\mapsto \frac{\a^2}{1+\a^2}\right)\\\label{sum2part5estim5}
&=\frac{\sqrt{1+\a^2}-1}{\sqrt{1+\a^2}}-\sum_{s=t+1}^{\infty}\frac{\smallbinom{2s}{s}}{(2s-1)}\pa{\a^2}{4(1+\a^2)}^s.
\end{align}
Concerning the last series, for $t\ge 2$,
\begin{align}\nonumber
0<\sum_{s=t+1}^{\infty}\frac{\smallbinom{2s}{s}}{(2s-1)}\pa{\a^2}{4(1+\a^2)}^s&\le \frac{1}{2\sqrt{\pi}\cdot t^{\frac 32}}\sum_{s=t+1}^{\infty}\pa{\a^2}{4(1+\a^2)}^s\\\nonumber
&\ \ \left(\text{by \Cref{BPRSprelimlem2}}\ \text{and}\ 2s-1\ge 2t\ \text{for}\ s\ge t+1\right)\\\nonumber
&\le \frac{1}{2\sqrt{\pi}\cdot t^{\frac 32}}\sum_{s\ge 3}\pa{\a^2}{4(1+\a^2)}^s\ \ \left(\text{as}\ t\ge 2\right)\\\label{sum2part5estim6}
&\le \frac{1}{2\sqrt{\pi}\cdot t^{\frac 32}}\sum_{s\ge 3}\pa{1}{18}^s\le \frac{6\cdot 10^{-5}}{t}.
\end{align}
Applying \eqref{sum2part5estim6} to \eqref{sum2part5estim5}, we obtain
\begin{equation}\label{sum2part5estim7}
S^{[5]}_{2,3}(t)=\frac{\sqrt{1+\a^2}-1}{\sqrt{1+\a^2}}+O_{\le 6\cdot 10^{-5}}\left(\frac 1t\right)=\frac{\sqrt{1+\a^2}-1}{\sqrt{1+\a^2}}\left(1+O_{\le 6\cdot 10^{-4}}\left(\frac 1t\right)\right).
\end{equation}
Applying \eqref{sum2part5estim4} and \eqref{sum2part5estim7} to \eqref{sum2part5estim1}, it follows that
\begin{align}\nonumber
&S^{[5]}_2(t)\\\nonumber
&=\frac{\left(\sqrt{1+\a^2}-1\right)\left(\cosh\left(\sqrt{1+\a^2}+1\right)-\cosh\left(\sqrt{1+\a^2}-1\right)\right)}{2}\\\nonumber
&\hspace{8 cm}\cdot \left(1+O_{\le 0.5}\left(\frac 1t\right)\right)\left(1+O_{\le 6\cdot 10^{-4}}\left(\frac 1t\right)\right)\\\label{sum2part5estim8}
&=\frac{\left(\sqrt{1+\a^2}-1\right)\left(\cosh\left(\sqrt{1+\a^2}+1\right)-\cosh\left(\sqrt{1+\a^2}-1\right)\right)}{2}\left(1+O_{\le 0.6}\left(\frac 1t\right)\right).
\end{align}
Applying 
\begin{align*}
f(t)&\mapsto S^{[2]}_2(t),\\
h(t)&\mapsto S^{[5]}_2(t),\\
g(t)&= 1,\\
A_1&= \frac{1}{2}\left(\left(\sqrt{1+\a^2}+1\right)\cosh\left(\sqrt{1+\a^2}-1\right)-\left(\sqrt{1+\a^2}-1\right)\cosh\left(\sqrt{1+\a^2}+1\right)-2\right),\\
A_2&= \frac{1}{2}\left(\left(\sqrt{1+\a^2}-1\right)\left(\cosh\left(\sqrt{1+\a^2}+1\right)-\cosh\left(\sqrt{1+\a^2}-1\right)\right)\right),\\
\left(E_1, E_2\right)&= \left(0.9, 0.6\right).
\end{align*}
in \eqref{BPRSinvlem6eqn2}, from \eqref{sum2part2estim} and \eqref{sum2part5estim8} we obtain
\begin{equation}\label{sum2part5estim9}
S^{[2]}_2(t)+S^{[5]}_2(t)=\left(\cosh\left(\sqrt{1+\a^2}-1\right)-1\right)\left(1+O_{\le 6.7}\left(\frac 1t\right)\right).
\end{equation}
From \eqref{sum2part1estim}, \eqref{sum2part3estim}, \eqref{sum2part4estim}, and \eqref{sum2part5estim9} it follows that
\begin{align*}
S_2(t)&=\left(\cosh\left(\sqrt{1+\a^2}-1\right)-1\right)\left(1+O_{\le 6.7}\left(\frac 1t\right)\right)+O_{\le 0.4}\left(\frac 1t\right)\\
&=\left(\cosh\left(\sqrt{1+\a^2}-1\right)-1\right)\left(1+O_{\le 54.9}\left(\frac 1t\right)\right),
\end{align*}
which concludes the proof of \eqref{BPRSinvlem5eqn2}.

To prove the remaining sums $(S_j(t))_{3\le j\le 9}$ one applies arguments very similar to those presented above. 
\qed	
	
	\section*{Acknowledgements}
	Banerjee was funded by the Austrian Science Fund (FWF): W1214-N15, project DK6. Paule and Radu were supported by grant SFB F50-06 of the FWF. 	
	This research was funded in whole or in part by the Austrian Science Fund (FWF) grants 10.55776/P20347 and 10.55776/PAT1332123. For open access purposes, 
	the authors have applied a CC BY public copyright license to any author-accepted manuscript version arising from this submission.

	{\bf Data Availability Statement} No data were generated or used in the preparation of this
	paper.
	
	{\bf Disclosure of potential conflicts of interest} The authors confirm that there is no conflict of interest in connection with this paper.

\end{document}